\newcommand{\Exp}{\mathds{E}}   
\newcommand{\Prob}{\mathds{P}}  
\def\P{\mathds{P}}
\newcommand{\leqslant}{\leq}
\newcommand{\geqslant}{\geq}
\def\bbR{\mathbb{R}}
\newcommand{\M}{\mathbf{M}}
\newcommand{\N}{\mathds{N}}                
\newcommand{\R}{\mathds{R}}                     
\renewcommand{\H}{\mathbf H_{e}}
\newcommand{\cmax}{\mathfrak{c}_{\mathrm{max}}}     
\newcommand{\cmin}{\mathfrak{c}_{\mathrm{min}}}
\newcommand{\B}{\mathbf{C}}
\newcommand{\C}{\mathbf{C}}
\newcommand{\X}{\mathbf{X}}
\newcommand{\tB}{\widetilde\B}
\newcommand{\Tr}[1]{\mathfrak{tr}[{#1}]}
\definecolor{freeblue}{rgb}{0.25,0.41,0.88}
\definecolor{redish}{rgb}{1.0, 0.54117647058, 0.0}
\definecolor{best}{rgb}{0.0, 0.4470588235294118, 0.6980392156862745}
\definecolor{annuluscolor}{rgb}{0.33725490196078434, 0.7058823529411765, 0.9137254901960784}
\newcommand{\eq}{\begin{equation}}
\newcommand{\qe}{\end{equation}}
\theoremstyle{plain}
\newtheorem{thm}{Theorem}
\newtheorem{lem}[thm]{Lemma}
\newtheorem{prop}[thm]{Proposition}
\newtheorem*{rem}{Remark}
\theoremstyle{definition}
\theoremstyle{remark}
\numberwithin{equation}{section}
\def\brho{\bar\rho}
\title{Sparse Recovery from Extreme Eigenvalues Deviation Inequalities}
\date{\today}
\keywords{Restricted Isometry Property; Gaussian Matrices; Rademacher Matrices; Deviations Inequalities; Sparse Regression;}
\subjclass[2010]{60F10; 62J05; 62J07; 15A18; 15A42; 65F15} 
\author{Sandrine Dallaporta}
\address{SD is with CMLA, ENS Cachan, CNRS, Universit\'e Paris-Saclay, 94235 Cachan, France.}
\email{sandrine.dallaporta@cmla.ens-cachan.fr}
\author{Yohann De Castro}
\address{YDC is with the Laboratoire de Math\'ematiques d'Orsay, Univ. Paris-Sud, CNRS, Universit\'e Paris-Saclay, 91405 Orsay, France.}
\email{yohann.decastro@math.u-psud.fr}
\address{YDC is with the MOKAPLAN team at INRIA Paris, 2 rue Simone Iff, 75012 Paris, France.}
\email{yohann.de-castro@inria.fr}
\address{YDC is with the CERMICS Laboratory at Ponts ParisTech, 6 et 8 avenue Blaise Pascal, 77455 Marne la Vallée Cedex 2, France.}
\email{yohann.de-castro@enpc.fr}
\begin{document}

\begin{abstract}
This article provides a new toolbox to derive sparse recovery guarantees\textemdash that is referred to as ‘‘stable and robust sparse regression''~(SRSR)\textemdash from deviations on extreme singular values or extreme eigenvalues obtained in Random Matrix Theory. This work is based on Restricted Isometry Constants (RICs) which are a pivotal notion in Compressed Sensing and High-Dimensional Statistics as these constants finely assess how a linear operator is conditioned on the set of sparse vectors and hence how it performs in SRSR. While it is an open problem to construct deterministic matrices with apposite RICs, one can prove that such matrices exist using random matrices models. In this paper, we show upper bounds on RICs for Gaussian and Rademacher matrices using state-of-the-art deviation estimates on their extreme eigenvalues. This allows us to derive a lower bound on the probability of getting SRSR. One benefit of this paper is a direct and explicit derivation of upper bounds on RICs and lower bounds 
on SRSR from deviations on the extreme eigenvalues given by Random Matrix theory.
\end{abstract}
\maketitle

\section{Introduction}
\subsection{Stable and Robust Sparse Recovery (SRSR)}
The recent breakthrough of Compressed Sensing \cite{MR2412803,MR2243152,MR2236170} has shown that one can acquire and compress  simultaneously a signal from few linear measurements. This methodology has an important impact in practice since it may be deployed in applied contexts where the time of acquisition is limited\textemdash {\it e.g.} medical imaging (MRI and functional~MRI)\textemdash  and/or costly\textemdash {\it e.g.} reflection seismology, one may consult \cite{CGLP,foucart2013mathematical} and references therein.

More precisely, the problem addressed in recent researches aims at solving under-determined systems of linear equations (with an additive error term $\mathbf{e}$) of the form
\begin{equation}
\label{eq:LinModelNoise}
y=\M x_0+\mathbf{e}
\end{equation}
where $\M$ is a known $(n \times p)$ matrix, $x_0$ an unknown vector in $\R^p$, $y$ and $\mathbf{e}$ are vectors in $\R^n$ and~$n$ is (much) smaller than $p$. This frame fits many interests across various fields of research, {\it e.g.} in statistics one would estimate $p$ parameters $x_0$ from a sample $y$ of size~$n$, $\M$ being the design matrix and $\mathbf{e}$ some random centered noise. Although the matrix $\M$ is not injective, recent advances have shown that one can recover an interesting estimate $\hat x$ of $x_0$ considering $\ell_1$-minimization solutions  as 
\eq
\label{eq:L1minNoise}
\hat x\in\arg\min\|x\|_1\quad\mathrm{s.t.}\quad\|y-\M x\|_2\leq\eta\,,
\qe
where $\eta>0$ is a tuning parameter such that the experimenter believes it holds $\|\mathbf{e}\|_2\leq\eta$ with high probability. 


A standard goal is to prove that the estimate $\hat x$ is close to $x_0$ or more precisely that the norm of the error $\hat x-x_0$ is small. To this purpose, one says that $\hat x$ satisfies the Stable and Robust Sparse Recovery (SRSR, see~\cite[page 88]{foucart2013mathematical}) if the following two error bounds hold
\begin{align}
\label{eq:SRSR1}
\tag{$\ell_1\textnormal{-}\mathbf{SRSR}$}
\|x_0-\hat x\|_1&\leq C\sigma_s(x_0)_1+D\sqrt s\eta\\
\label{eq:SRSR2}
\tag{$\ell_2\textnormal{-}\mathbf{SRSR}$}
\|x_0-\hat x\|_2&\leq \frac{C}{\sqrt s}\sigma_s(x_0)_1+D\eta
\end{align}
where $C,D>0$ are constants and $\sigma_s(x_0)_1$ denotes the approximation error in $\ell_1$-norm by~$s$ coefficients, namely 
\[
\sigma_s(x_0)_1:=\min\|x_0-x\|_1\,,
\]
where the minimum is taken over the space~$\Sigma_{s}$ of sparse vectors $x$, {\it i.e.} the set of vectors with at most $s$ nonzero coordinates. 
SRSR shows that the estimate $\hat x$ recovers the $s$ largest coefficients of the target vector $x_0$ in a stable\footnote{In an idealized situation one would assume that $x_0$ is sparse. Nevertheless, in practice, we can only claim that $x_0$ is close to sparse vectors. The stability is the ability to control the estimation error $\| x_0-\hat x\|$ by the distance between $x_0$ and the sparse vectors. The reader may consult \cite[page 82]{foucart2013mathematical} for instance.}
and robust (to additive errors $\mathbf e$) manner. One can prove that the~$\sqrt s$ terms are optimal in the sense of Approximation Theory, one may consult \cite[Remark~4.2.3 page 88]{foucart2013mathematical} for further details. Sufficient condition for SRSR holds whenever the matrix $X$ satisfies some properties, see for instance \cite{MR2236170,MR2300700,foucart2009sparsest,MR2533469,van2009conditions,bertin11:_adapt_dantiz,juditsky2011accuracy,de2012remark} or \cite{CGLP,foucart2013mathematical}.

\subsection{Main Result: a Toolbox to get SRSR from Deviation Inequalities}
In this paper, we want to guarantee SRSR when the matrix $\M$ is chosen at random and $\hat x$ is produced by an $\ell_1$-minimization estimate such as \eqref{eq:L1minNoise}. 

The first random model we assume for $\M$ is as follows. Let~$R$ be a subset of $\{1,\ldots,p\}$ of size~$r$ and denote $\M_R$ the $(n\times r)$ matrix obtained by keeping the columns of~$\M$ that belongs to~$R$ and~$\M_R^\star$ its hermitian adjoint. Assume that all the Gram matrices $\M_R^\star \M_R$ satisfies
\eq
\label{covariance_model}
\forall r\in\{1,\ldots,\lceil 0.1230\, n\rceil \},\ 
\forall R\subseteq\{1,\ldots,p\}\ \mathrm{s.t.}\ |R|=r,\quad \M_R^\star \M_R\sim \C_{r,n}\,,
\qe
namely $\M_R^\star \M_R$ are identically distributed with respect to the law of the random covariance matrix~$\C_{r,n}$ of size $(r\times r)$ that may depend on the parameter $n$ also. As we will see in the sequel, standard models of covariance matrices $\C_{r,n}$ are given by $\C_{r,n}=({1}/{n})\,\X\X^\star$ where $\X\in\R^{r\times n}$ has {\it iid} entries drawn with respect to a law~$\mathcal L$. Note that we do not require that~$\M$ has independent entries here, we only assume that the matrices $\M_R\M_R^\star$ are identically distributed and that we have access to the rate function of the deviations on its extreme eigenvalues. 

Guaranteeing SRSR with sparsity parameter $s$ requires to bound the eigenvalues of $\M_R^\star \M_R$ when $r=2s$, see for instance Property~\eqref{eq:Condition_SRSR}. We consider the asymptotic proportional growth model where $s/n\to\rho$, {\it i.e.} we assume that size of the sparse vectors over number of equations tends to a constant. We also consider the parameter $\brho=r/n\in(0,0.1230)$ and since $r=2s$ we remark that $\bar \rho=2\rho$. 
We assume that we have access to a deviation inequality on extreme eigenvalues of~$\C_{r,n}$ with rate function $t\mapsto\mathds W(\brho,t)$ depending on this ratio. For instance, we will consider that for all $n\geq n_0(\bar \rho)$, 
\eq
\label{eq:deviation}
\forall t\in[0,0.6247),\quad
\Prob\big\{\big(\lambda_1-(1+\sqrt{\brho})^2\big)\vee\big((1-\sqrt{\brho})^2-\lambda_r\big)\geq t\big\}\leq c(\brho)e^{-n\mathds W(\brho,t)}
\qe
where  $n_0(\brho)\geq 2$ and $c(\brho)>0$ may depend on the ratio $\brho$, the function $t\mapsto\mathds W(\brho,t)$ is continuous and increasing on $[0,0.6247)$ such that $\mathds W(\brho,0)=0$, $\lambda_1$ and $\lambda_r$ are respectively the largest and the least eigenvalues of $\C_{r,n}$. Remark that the ‘‘bulk'' bounds $(1\pm \sqrt{\brho})^2$ are prescribed by the Marchenko-Pastur law. Consider the asymptotic proportional growth model for $\M$ so that $n/p\to\delta$, {\it i.e.} the number of equations over number of unknowns tends also to a constant. The main contribution of this paper is to give a bound on $(\delta,\rho)$ from the rate function $\mathds W$ so that SRSR holds with overwhelming probability. 

The second random model covered by this paper is given by matrices $\M$ satisfying the following property
\eq
\label{singular_model}
\forall r\in\{1,\ldots,\lceil 0.1230\,n\rceil \},\ 
\forall R\subseteq\{1,\ldots,p\}\ \mathrm{s.t.}\ |R|=r,\quad \M_R\sim \X_{r,n}\,,
\qe
namely $\M_R$ are identically distributed with respect to the law of the random rectangular matrix~$\X_{r,n}$ of size $(n\times r)$. Standard models of rectangular matrices $\X_{r,n}$ are given by $\X_{r,n}=({1}/{\sqrt n})\,\X^\star$ where $\X\in\R^{r\times n}$ has {\it iid} entries drawn with respect to a law~$\mathcal L$. As in the first model, we do not require that $\M$ has independent entries here, we only assume that the matrices $\M_R$ are identically distributed. In this case, we assume that we have a deviation inequality with rate function $\mathds W$ on the extreme singular values of $\X_{r,n}$. Again, our result gives a bound on~$(\delta,\rho)$ from the rate function $\mathds W$ so that SRSR holds with overwhelming probability. 

\begin{figure}[t]
\includegraphics[width=0.5\textwidth]{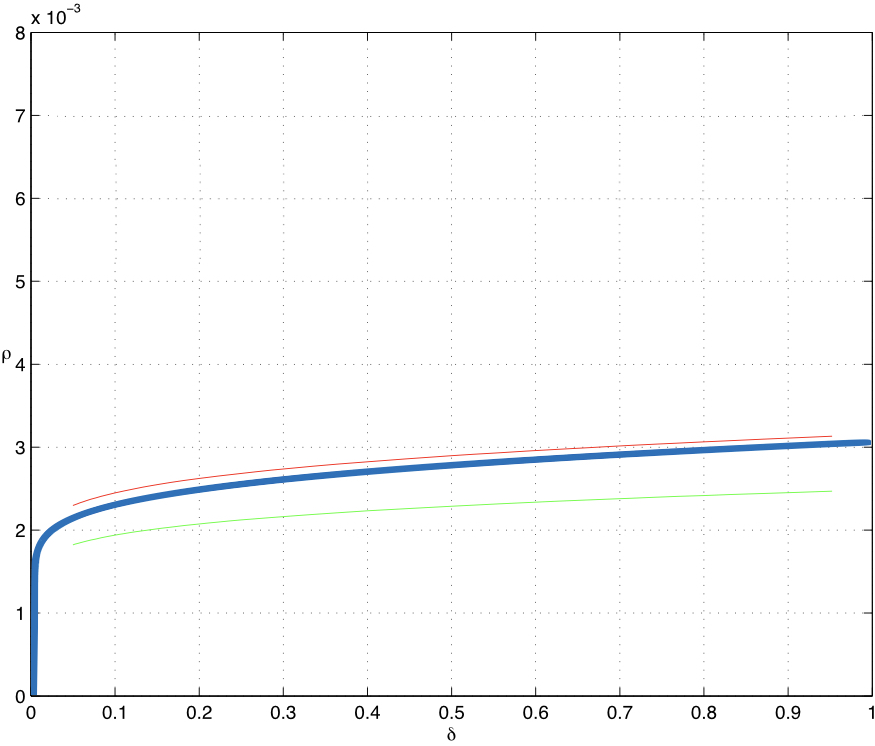}
\caption{{\bf [Bounds on SRSR in the {\it iid} Gaussian case]} The region below the curves gives pairs of $(\delta=n/p,\rho=s/n)$ for which SRSR holds with overwhelming probability when $\M$ has {\it iid} Gaussian entries. Our new bound \eqref{eq:lowerbound} in blue is comparable to the one of \cite{blanchard2011compressed} in red (derived using Foucart and Lai condition \cite[Theorem~2.1]{foucart2009sparsest}) and the one of~\cite{MR2412803} in green (derived from symmetric RICs bounds as in \eqref{eq:ConditionRIP}). This figure is an update of Figure 3.2 in \cite{blanchard2011compressed}.}
\label{fig:compa1}
\end{figure}

The detailed results are presented in Section~\ref{sec:main} and we give here the bound one can get in the case where~$\M$ has {\it iid} Gaussian entries. More precisely, we establish a new sufficient condition on SRSR that offers the same lower bound as previous state-of-the-art results such as the results presented in \cite{blanchard2011compressed}. Indeed, using Davidson-Szarek's deviation \cite{davidson2001local}, we prove that if
\eq
\label{eq:lowerbound}
\delta > \frac{1}{2\rho}\exp\Bigg[1-\frac{1}{4\rho}\bigg(\sqrt{\frac{{33 - 5 \sqrt{41}}}{8}}-\sqrt{2\rho}\bigg)^2 \Bigg]\,,
\qe
then SRSR holds with overwhelming probability when $\M$ has {\it iid} Gaussian entries, see Section~\ref{sqec:DS}. This bound is comparable to previous state-of-the-art result \cite{blanchard2011compressed,MR2417886}, see Figure~\ref{fig:compa1}.


\subsection{Byproduct Result: New Bounds on the Restricted Isometry Constants}
One property for assessing SRSR is the Restricted Isometry Property \cite{MR2236170,MR2300700} of order $s$ and parameter $c$, referred to as $\mathrm{RIP}(s,c,\M)$. It is defined by
\[
\forall x\in\Sigma_{s},\quad(1-c)\lVert x\lVert_{2}^{2}\leq\lVert\M x\lVert_{2}^{2}\leq(1+c)\lVert x\lVert_{2}^{2}.
\]
 Denote by $\mathfrak{c}(s,\M)$ the minimum of such $c$'s. One can prove (see Theorem 6.12 in \cite{foucart2013mathematical} for instance) that, if $\mathrm{RIP}$ such that
\eq
\label{eq:ConditionRIP}
\tag{FR-$\mathfrak c({2s})$}
\mathfrak{c}(2s,\M)<{4}\big/{\sqrt{41}}\simeq0.625\,,
\qe
holds and $\hat x$ is any solution to \eqref{eq:L1minNoise} then SRSR of order $s$ holds with $C,D>0$ depending only on~$\mathfrak{c}(2s,\M)$. A slightly modified RIP was introduced by Foucart and Lai in \cite{foucart2009sparsest,blanchard2011compressed}. They introduce two constants, called Restricted Isometry Constants~(RICs). For a matrix $\M$ of size $(n\times p)$, the RICs, $\cmin(s,\M)$ and $\cmax(s,\M)$, are defined as
\begin{align*}
\cmin&:=\min_{c_-\geq0}c_-\quad \mathrm{subject\ to}\quad(1-c_-)\lVert x\lVert_{2}^{2}\leq\lVert\M x\lVert_{2}^{2}\quad \mathrm{for\ all}\quad x\in\Sigma_{s},\\
\cmax&:=\min_{c_+\geq0}c_+\quad \mathrm{subject\ to}\quad(1+c_+)\lVert x\lVert_{2}^{2}\geq\lVert\M x\lVert_{2}^{2}\quad \mathrm{for\ all}\quad x\in\Sigma_{s}.
\end{align*}
Hence, it holds $(1-\cmin)\lVert x\lVert_{2}^{2}\leq\lVert\M x\lVert_{2}^{2}\leq (1+\cmax)\lVert x\lVert_{2}^{2}$ for all $x\in\Sigma_{s}$, where we recall that~$\Sigma_{s}$ denotes the set of vectors with at most $s$ nonzero coordinates. Reporting the influence of both extreme eigenvalues of covariance matrices built from $r=2s$ columns of $\M$, one can weaken~\eqref{eq:ConditionRIP}, see for instance Theorem 2.1 in~\cite{foucart2009sparsest}. Revisiting \cite{foucart2009sparsest} and \cite[Proof of Theorem 6.13 (page 145)]{foucart2013mathematical}, this paper provides the weakest condition to get SRSR in the following theorem, see Appendix~\ref{app:proof_Condition_SRSR} for a proof.

\begin{thm}
\label{thm:Condition_SRSR}
If $\M$ satisfies this asymmetric Restricted Isometry Property with RICs such that
\eq
\label{eq:Condition_SRSR} 
\tag{SRSR-$\gamma(2s)$}
\gamma(2s,n,p):=\frac{1+\cmax(2s,\M)}{1-\cmin(2s,\M)}<\frac{(4+\sqrt{41})^2}{25}\simeq4.329,
\qe
then the Stable and Robust Sparse Recovery (SRSR) property of order $s$ holds with positive constants $C$ and $D$ depending only on $\cmin(2s,\M)$ and $\cmax(2s,\M)$.
\end{thm}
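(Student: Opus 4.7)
The plan is to follow the asymmetric Foucart--Lai scheme, as in the proof of Theorem~6.13 in \cite{foucart2013mathematical}, keeping both RICs $\cmin$ and $\cmax$ distinct throughout. Setting $h := \hat x - x_0$, feasibility of $x_0$ and $\hat x$ in \eqref{eq:L1minNoise} (assuming $\|\mathbf e\|_2\le\eta$) gives $\|\M h\|_2 \le 2\eta$. Let $S_0$ denote the index set of the $s$ largest magnitudes of $x_0$; the $\ell_1$-optimality $\|\hat x\|_1 \le \|x_0\|_1$ rearranges, after splitting the $\ell_1$-norm along $S_0$ and $S_0^c$, into the standard tube inequality $\|h_{S_0^c}\|_1 \le \|h_{S_0}\|_1 + 2\sigma_s(x_0)_1$. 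Then I sort $h|_{S_0^c}$ by decreasing magnitude and cut it into successive blocks $S_1,S_2,\ldots$ of size $s$, so that the classical rearrangement gives $\sum_{j\ge 2}\|h_{S_j}\|_2 \le s^{-1/2}\|h_{S_0^c}\|_1$.

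The central estimate comes from the lower RIC applied to the $2s$-sparse vector $\tilde h := h_{S_0\cup S_1}$:
\begin{equation*}
(1-\cmin)\,\|\tilde h\|_2^2 \;\le\; \|\M\tilde h\|_2^2 \;=\; \langle \M\tilde h,\,\M h\rangle \;-\; \sum_{j\ge 2}\langle \M\tilde h,\,\M h_{S_j}\rangle .
\end{equation*}
The first inner product is bounded by Cauchy--Schwarz combined with the upper RIC, yielding $|\langle \M\tilde h,\M h\rangle| \le 2\eta\sqrt{1+\cmax}\,\|\tilde h\|_2$. For each cross term I split $\tilde h = h_{S_0}+h_{S_1}$ so that the resulting inner products pair two $s$-sparse vectors with disjoint supports; the polarization identity
$4\langle\M u,\M v\rangle = \|\M(u+v)\|_2^2 - \|\M(u-v)\|_2^2$ together with the two-sided RIC of order $2s$ gives $|\langle \M h_{S_k},\M h_{S_j}\rangle|\le \tfrac{\cmin+\cmax}{2}\|h_{S_k}\|_2\|h_{S_j}\|_2$ after the customary scaling step. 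Summing, using $\|h_{S_0}\|_2+\|h_{S_1}\|_2\le\sqrt 2\,\|\tilde h\|_2$ and combining with the tube inequality (in the form $s^{-1/2}\|h_{S_0}\|_1 \le \|\tilde h\|_2$), collapses the display into an inequality in the unknowns $\|\tilde h\|_2$, $\eta$ and $\sigma_s(x_0)_1$ alone.

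The technical heart of the proof is then to extract the sharp threshold on $\gamma(2s,n,p)$ from the positivity of the coefficient multiplying $\|\tilde h\|_2$ on the left, which is the main obstacle: a naive execution of the above scheme produces only the suboptimal threshold $\gamma < 1+\sqrt{2}$. To reach the sharp value one keeps a free scaling parameter $\mu>0$ in the polarization step by bounding $|\langle\M h_{S_k},\M h_{S_j}\rangle| \le \tfrac{\cmin+\cmax}{4}(\mu^2\|h_{S_k}\|_2^2+\mu^{-2}\|h_{S_j}\|_2^2)$ and optimizes $\mu$ jointly with the quadratic inequality in $\|\tilde h\|_2$, exactly as in the symmetric Foucart--Lai argument (\cite{foucart2009sparsest}, revisited in the proof of Theorem~6.12 in~\cite{foucart2013mathematical}). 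The critical configuration collapses to $\gamma(2s,n,p) < (4+\sqrt{41})^2/25$, which in the symmetric regime $\cmin = \cmax = c$ recovers the classical $c < 4/\sqrt{41}$. Once $\|\tilde h\|_2$ is controlled by $\eta$ and $s^{-1/2}\sigma_s(x_0)_1$, the $\ell_2$-SRSR bound follows from $\|h\|_2 \le \|\tilde h\|_2 + \sum_{j\ge 2}\|h_{S_j}\|_2$ combined with the tail-sum estimate, and the $\ell_1$-SRSR bound from $\|h\|_1 \le 2\|h_{S_0}\|_1 + 2\sigma_s(x_0)_1 \le 2\sqrt s\,\|\tilde h\|_2 + 2\sigma_s(x_0)_1$; the constants $C$ and $D$ depend explicitly on $\cmin(2s,\M)$ and $\cmax(2s,\M)$ only.
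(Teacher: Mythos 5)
Your high-level strategy (feasibility $\Rightarrow$ $\|\M h\|_2\le 2\eta$, the tube inequality, block decomposition, polarization) is the right family of arguments, but the two ideas that actually produce the sharp threshold $(4+\sqrt{41})^2/25$ are both missing, and the proposed fix does not supply them.

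First, the paper establishes the $\ell_2$-robust null space property rather than bounding $h$ directly, works with the $s$-sparse block $v_{S_0}$ alone (not with $\tilde h=h_{S_0\cup S_1}$), and invokes the sharper tail-sum estimate (Lemma~6.14 of \cite{foucart2013mathematical}) $\sum_{k\ge1}\|v_{S_k}\|_2\le s^{-1/2}\|v_{S_0^c}\|_1+\tfrac14\|v_{S_0}\|_2$ rather than the classical $\sum_{j\ge2}\|h_{S_j}\|_2\le s^{-1/2}\|h_{S_0^c}\|_1$. The $\tfrac14\|v_{S_0}\|_2$ term, together with working at level $s$ instead of $2s$, is essential to landing on the $b<4/5$ condition. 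These are not cosmetic choices.

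Second, and more fundamentally, the paper's cross-term bound is not the two-term polarization estimate $|\langle\M u,\M v\rangle|\le \tfrac{\cmax+\cmin}{2}\|u\|_2\|v\|_2$ you derive (even after optimizing your scaling $\mu$, this is all the two-term identity can give). The paper instead introduces a \emph{pivot} $t\in[-\cmin,\cmax]$ determined by the actual stretch $\|\M v_{S_0}\|_2^2=(1+t)\|v_{S_0}\|_2^2$, and proves, via a three-term parametrized polarization
\[
2\langle\M u,\M w\rangle=\frac{1}{\alpha+\beta}\Big[\|\M(\alpha u+w)\|_2^2-\|\M(\beta u-w)\|_2^2-(\alpha^2-\beta^2)\|\M u\|_2^2\Big],
\]
the $t$-dependent bound $|\langle\M v_{S_0},\M v_{S_k}\rangle|\le\sqrt{(\cmax-t)(\cmin+t)}\,\|v_{S_0}\|_2\|v_{S_k}\|_2$. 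This is the mechanism that makes the cross-term coefficient shrink precisely when $\M$ deviates more from isometry on $v_{S_0}$; normalizing by $(1+t)$ and taking the worst case over $t$ then yields the constant $(\cmin+\cmax)/(2\sqrt{(1-\cmin)(1+\cmax)})$, in which $\gamma$ appears through the $\sqrt{(1-\cmin)(1+\cmax)}$ denominator. Your single free parameter $\mu$, optimized after the fact, cannot reproduce this: it merely recovers the $(\cmax+\cmin)/2$ coefficient and (as you note) the suboptimal threshold $\gamma<1+\sqrt2$. There is no visible route from the step you describe to $(4+\sqrt{41})^2/25$; the claim that ``joint optimization with the quadratic'' lands there is asserted but not supported, and it is exactly at this point that the proof would have to introduce the pivot $t$ and the three-term identity to close the gap.
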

\begin{rem}
The condition to get SRSR described in~\cite[Theorem~2.1]{foucart2009sparsest} can be equivalently written as $\gamma(2s,n,p)<(5+\sqrt 2)/(1+\sqrt 2)\simeq2.657$ which is a stronger requirement than Condition \eqref{eq:Condition_SRSR}. Also, remark that Condition~\eqref{eq:Condition_SRSR} leads to the inequality $(1+\mathfrak{c}(2s,\M))/(1-\mathfrak{c}(2s,\M))<(4+\sqrt{41})^2/25$ and one can check that this is exactly Condition~\eqref{eq:ConditionRIP}. From this remark, one can view \eqref{eq:Condition_SRSR} as a generalization of~\eqref{eq:ConditionRIP} to the frame of asymmetric isometry constants.
\end{rem}

Note that RIP constants are involved in other frameworks such as low-rank or group-sparse recovery, see for instance \cite{traonmilin2016stable,cai2014sparse,ayaz2016uniform}. Our results do not directly apply to those frameworks but the control on RIP constants given by the methodology presented in this paper (namely \eqref{eq:bounds_RIC_eigen_min}, \eqref{eq:bounds_RIC_eigen_max}, \eqref{eq:bound_RIC_singular_min} and \eqref{eq:bound_RIC_singular_max}) may be invoked in these settings.

\subsection{From rate functions to SRSR}\label{sec:devtoRIC}
In this paper, we provide a toolbox to derive upper bounds on~RICs (with overwhelming probability) from deviation inequalities on extreme eigenvalues (or extreme singular values) of covariance matrices $\C_{r,n}$. 
{The known asymptotic behavior of these extreme eigenvalues provides an expected behavior for $\mathds W(\brho,t)$ in both variables $t$ and~$\brho$.} Notably, it appears along our analysis that bounds on SRSR and RICs are extremely dependent on the behavior, for fixed $t$, of the rate function $\brho\mapsto\mathds W(\brho,t)$ when $\brho$ is small, and possibly tending to zero. {More details will be given in Section~\ref{sec:state_art_dev}.}
Unfortunately, this dependence is sometimes unclear in the literature and we have to take another look at state-of-the-art results in this field. Revisiting the paper of Feldheim and Sodin~\cite{feldheim2010universality} on sub-Gaussian matrices, Appendix~\ref{sec:dev_Rad} reveals the dependency on $\brho$ as well as bounds on the constant appearing in their rate function~$\mathds W_{\mathrm{FS}}$ for the special case of Rademacher entries. Other important results due to Ledoux and Rider \cite{ledoux2010small}, and Davidson and Szarek~\cite{davidson2001local} are investigated in Appendix~\ref{sec:keylem}. 



\subsection{Previous works on bounding RIP and RICs}
\label{sec:PreviouslyOnHBO}
The existence of RIP matrices with bounded RIP constant such as \eqref{eq:ConditionRIP} has been proved using random matrix models, see \cite{MR2453366,MR2453368,adamczak2011restricted,CGLP} for instance. 
It has been proved that \eqref{eq:ConditionRIP} holds with overwhelming probability for a large class of random matrix models as soon as the interplay between sparsity~$s$, number of measurements $n$ and number of unknown parameters~$p$ satisfies 
\eq
\label{eq:sLOGps}
n\geq c_1\, s\log(c_2p/s)
\qe
for some universal constants $c_1$ and $c_2$ (that might depend on the random matrix model). It should be mentioned that finding deterministic matrices satisfying \eqref{eq:ConditionRIP} with $n=\mathcal O( s\log(p/s))$ is one of the most prominent open problem in Compressed Sensing, see \cite{foucart2013mathematical} for instance. Furthermore, it has been shown in \cite[Proposition~2.2.17]{CGLP} that the converse is true for any matrix $\M$. If the SRSR recovery \eqref{eq:SRSR1} or \eqref{eq:SRSR2} (with $\eta=0$) holds then necessarily $n\geq c'_1\, s\log(c'_2p/s)$ for some universal constants $c'_1$ and $c'_2$. Since we have lower and upper bounds of the same flavor, it seems that the condition \eqref{eq:sLOGps} captures all we need to know about $\ell_1$-recovery schemes. In reality, there is a gap between the constants appearing in the upper and lower bounds. A simple way to witness it is to consider the companion problem when there is no additive errors. 
In this case $\mathbf e=0$ in \eqref{eq:LinModelNoise}
and $\eta=0$ in \eqref{eq:L1minNoise}, then stable recovery occurs for all target vector~$x_0$ if and only a property called \textquotedblleft Null-Space Property\textquotedblright\  (NSP) holds. As for RIP, one can prove that \eqref{eq:sLOGps} depicts a necessary and sufficient condition on NSP up to a change of constants, see for instance \cite{CGLP,azais2014rice}. Nevertheless, a better description of this property is offered in the works \cite{donoho2005neighborliness,donoho2009counting,donoho2009observed} since the authors provide a phase transition on NSP for large Gaussian matrices with {\it iid} entries. Let us also mention the important papers \cite{mccoy2012sharp,amelunxen2013living} that give quantitative estimates of ``weak'' thresholds appearing in convex optimization, including the location and the width of the transition region for~NSP.

Following this outbreaking result, one can wonder whether a phase transition holds for properties guaranteeing SRSR such as Condition \eqref{eq:ConditionRIP} or the asymmetric~\eqref{eq:Condition_SRSR}. To the best of our knowledge, the first work looking for a phase transition on SRSR can be found in \cite{blanchard2011compressed} where the authors considered matrices with independent standard Gaussian entries and used an upper bound on the joint density of the eigenvalues to derive a region where~\eqref{eq:Condition_SRSR} holds. Their lower bound is not explicit but one can witness in \cite[page 119]{blanchard2011compressed}. {Furthermore they provide web forms for the calculation of bounds on RICs, which are available at \href{https://people.maths.ox.ac.uk/tanner/ric_bounds.shtml}{Jared Tanner's} webpage. 
Shortly after, Bah and Tanner improved these bounds in \cite{bah2010improved} by preventing the use of union bound over all sub-matrices built from $r=2s$ columns of~$\M$ by grouping those which share a substantial number of columns. Their 
bounds are still implicit but web forms for their calculation are available at the same place. The same authors provided later~\cite{bah2014asymptotics} explicit bounds for the RICs in extreme asymptotic regime:
\begin{enumerate}[(a)]
 \item \label{item:rho_0_delta_fixed} when $\rho \to 0$ and $\delta>0$ is fixed,
 \item \label{item:rho_fixed_delta_0} when $\delta \to 0$ and $\rho>0$ is fixed,
 \item \label{item:rho_0_delta_0}when $\rho=\frac{-1}{\gamma \log \delta}$ ($\gamma$ is a fixed parameter) and $\delta \to 0$.
\end{enumerate}}
\noindent
In the sequel, we may refer to these regimes as Regime $(a)$, $(b)$ and $(c)$ respectively.

We would like to point out recent ‘‘off-the-shelf'' concentration results, often based on generic chaining, that imply bounds on RIP constants, at least for subgaussian random matrices. The reader may consult for instance \cite[Section 5]{dirksen2015tail} or \cite{liaw2017simple}. From these results, classical bounds on gaussian widths of sparse vectors lead to null space properties or RIP constants, see \cite[Section~2.6]{liaw2017simple}. We would also point out the interesting results on the ‘‘Small Ball Method'', see \cite{lecue2014sparse,lecue2018regularization} for instance, that requires only weak moment assumptions to get sparse recovery guarantees.

\subsection{Outline}
The paper is organized as follows. Section \ref{sec:main} states the main results: it provides a general method to derive bounds on RICs and phase transition in Condition \eqref{eq:Condition_SRSR} from deviation inequalities on eigenvalues or singular values. Subsection \ref{sec:state_art_dev} begins with a discussion on what is expected for such deviation inequalities. The general method described in Subsections \ref{sec:dev_eigen_to_RIC} and \ref{sec:dev_singular_to_RIC} is then applied to previously known inequalities. Section \ref{sec:main} ends with a summary of the obtained bounds. 

The proofs are contained in the appendix. Appendix \ref{app:proof_Condition_SRSR} provides the proof of Theorem \ref{thm:Condition_SRSR}, while Appendix \ref{sec:keylem} and Appendix \ref{proof:key_singular} contain the proofs of Theorems~\ref{thm:key_eigen} and \ref{thm:key_singular}. In Appendix~\ref{sec:dev_Rad}, we follow the steps of \cite{feldheim2010universality} to provide an upper bound on the constant in the deviation inequality for extreme singular values of Rademacher matrices.


\subsection*{Acknowledgments.}
We would like to thank Sasha Sodin for his patient answers to our many questions. Moreover, this paper greatly benefited from the comments of anonymous referees on previous version of this paper.

\pagebreak[3]

\section{From deviations to RICs and SRSR bounds}
\label{sec:main}
Following the framework of~\cite{blanchard2011compressed}, we provide asymptotic bounds on RICs in the proportional growth model. {As previously explained, we suppose that we are able to control the deviation of extreme eigenvalues or singular values.} We aim at controlling uniformly the extreme eigenvalues, the combinatorial complexity is standardly~\cite{blanchard2011compressed} controlled by the quantity $\delta^{-1}\H(\brho\delta)$ where 
\[
\H(t):=-t\log t-(1-t)\log(1-t)\quad \textrm{for}\ t \in (0,1)\,,
\]
denotes the Shannon entropy. {The improvement introduced in \cite{bah2010improved} to deal with this combinatorial complexity could be used here but we chose not to do so as it would have turned our explicit bounds into implicit ones.} One may remark that the  quantity 
\[
t_0:=\mathds W^{-1}\Big(\brho,{\delta}^{-1}\H(\brho\delta)\Big)
\] 
governs the order of the deviation in the rate function $t\mapsto\mathds W(\brho,t)$ when bounding the extreme eigenvalues uniformly over all possible supports $S$ of size $r$ among the set of indices $\{1,\ldots,p\}$, see~\eqref{Psi_cov} and \eqref{Psi_rect} in the functions~$\Psi_{\min/\max}$. {Here $\mathds W^{-1}(\brho,.)$ denotes the inverse of~$\mathds W$ with respect to its second variable.} 

The next theorems gives the probability that the matrix $\M$ satisfies \eqref{eq:Condition_SRSR}. From Theorem~\ref{thm:Condition_SRSR} note that this event is included in the event such that SRSR holds, namely
\[
\big\{\M\ \mathrm{satisfies}\ \eqref{eq:Condition_SRSR}\ \mathrm{with}\ s\leq s_0\big\}
\subset\big\{\mathrm{SRSR\ holds\ with\ } s\leq s_0\big\}
\]
where the right hand event means that for any $x_0\in\mathds R^p$, any solution $\hat x$ to \eqref{eq:L1minNoise} satisfies SRSR for all parameters $s$ such that $s\leq s_0$.

\subsection{Using extreme eigenvalues deviations inequalities}
\label{sec:dev_eigen_to_RIC}
Useful constants are  $\rho_0:=(33 - 5 \sqrt{41})/16\simeq0.0615$ and $\sqrt{\rho_0}\simeq0.3508$. Also, we denote $\tau_0:=4/\sqrt{41}\simeq0.6247$. The key main result is the following theorem proved in Section~\ref{sec:keylem}.

\begin{thm}
\label{thm:key_eigen}
Assume that for all $0<\bar{\rho}<2\rho_0$, the largest eigenvalue $\lambda_1$ and the smallest eigenvalue~$\lambda_r$ of a $(r\times r)$ covariance matrix $\C_{r,n}$ 
where $r:=\lfloor \brho n\rfloor$ 
satisfy for all $n\geq n_0(\brho)$,
\[
\forall 0\leq  t<\tau_0,\quad
\Prob\Big\{\big(\lambda_1-(1+\sqrt{\brho})^2\big)\vee\big((1-\sqrt{\brho})^2-\lambda_r\big)\geq t\Big\}\leq c(\brho)e^{-n\mathds W(\brho,t)}
\]
where $n_0(\brho)\geq 2$ and $c(\brho)>0$ may both depend on $\brho$, the function $t\mapsto\mathds W(\brho,t)$ is continuous and increasing on $[0,\tau_0)$ such that $\mathds W(\brho,0)=0$. Then for any $0<\delta<1$ and $0<\rho<\rho_0$ 
such that
\eq
\label{eq:CondLemma}
\delta>\Psi_0^{(1)}(\rho,\mathds W):=\frac{1}{2\rho}\exp\Big\{1-\frac{1}{2\rho}{\mathds W\Big[2\rho,2\tau_0(\sqrt{\rho}-\sqrt{\rho_0})\big(\sqrt\rho-\frac{1}{2\sqrt{\rho_0}}\big)\Big]}\Big\}\,,
\qe
and for any sequence of $(n\times p)$ matrices $(\M^{(n)})_{n\geq2}$ with $n/p\to\delta$ satisfying \eqref{covariance_model}, it holds that
\[
\Prob\Big\{{\M^{(n)}}\ \mathrm{satisfies}\ \eqref{eq:Condition_SRSR}\ \mathrm{with}\ s\leq s_0=\lfloor \rho n\rfloor\Big\}
\geq1-2c(2\rho)e^{-nD_1(\delta,\rho)}
\to1
\]
for some $D_1(\delta,\rho)>0$ that may depend on $\delta$ and $\rho$.

Furthermore, for all $\varepsilon>0$ and for all $\brho$ and $\delta$ such that ${\delta}^{-1}\H(\brho\delta)$ belongs to the range of~$\mathds W(\brho,\cdot)$, it holds
\begin{align}\label{eq:bounds_RIC_eigen_min}
\P\Big\{\cmin(\lfloor \bar\rho n\rfloor,\M^{(n)}) \geqslant \Psi_{\min}^{(1)}(\delta,\bar\rho,\mathds W) +\varepsilon\Big\} &\leqslant c(\bar\rho)e^{-nD_2(\bar\rho,\delta,\varepsilon)},\\
\label{eq:bounds_RIC_eigen_max}
\P\Big\{\cmax(\lfloor \bar\rho n\rfloor,\M^{(n)}) \geqslant\Psi_{\max}^{(1)}(\delta,\bar\rho,\mathds W)+\varepsilon\Big\} &\leqslant c(\bar\rho)e^{-nD_2(\bar\rho,\delta,\varepsilon)},
\end{align}
where $D_2(\bar\rho,\delta,\varepsilon)>0$ and we denote 
\begin{align}
\label{Psi_cov}
\Psi_{\min}^{(1)}(\delta,\bar\rho,\mathds W)&:=\sqrt{\bar\rho}(2-\sqrt{\bar\rho})+t_0\,,\\
\notag
\Psi_{\max}^{(1)}(\delta,\bar\rho,\mathds W)&:= \sqrt{\bar\rho}(2+\sqrt{\bar\rho})+t_0\,,
\end{align}
with $t_0:=\mathds W^{-1}(\bar\rho,{\delta}^{-1}\H(\bar\rho\delta))$.
\end{thm}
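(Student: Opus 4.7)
The argument splits into two parts. The RIC bounds \eqref{eq:bounds_RIC_eigen_min}--\eqref{eq:bounds_RIC_eigen_max} are obtained by a uniform union bound over all supports $R\subset\{1,\ldots,p\}$ of cardinality $r=\lfloor\bar\rho n\rfloor$, combined with the single-matrix deviation hypothesis. The key observation is the variational identity $1+\cmax(r,\M)=\max_{|R|=r}\lambda_1(\M_R^\star\M_R)$ and $1-\cmin(r,\M)=\min_{|R|=r}\lambda_r(\M_R^\star\M_R)$, which turns simultaneous spectral control of every sub-Gram matrix into control of the asymmetric RICs. Using $\binom{p}{r}\leq\exp(p\,\H(r/p))=\exp(n\delta^{-1}\H(\bar\rho\delta))$ together with \eqref{covariance_model} and the hypothesis, one gets, for every $t\in[0,\tau_0)$,
\[
\P\bigl\{\cmax(r,\M)\geq\bar\rho+2\sqrt{\bar\rho}+t\bigr\}\vee\P\bigl\{\cmin(r,\M)\geq -\bar\rho+2\sqrt{\bar\rho}+t\bigr\}\leq c(\bar\rho)\,e^{\,n\delta^{-1}\H(\bar\rho\delta)-n\mathds W(\bar\rho,t)}.
\]
Plugging $t=t_0+\varepsilon$ with $t_0=\mathds W^{-1}(\bar\rho,\delta^{-1}\H(\bar\rho\delta))$ cancels the combinatorial factor and leaves the positive exponent $D_2=\mathds W(\bar\rho,t_0+\varepsilon)-\mathds W(\bar\rho,t_0)$, guaranteed by continuity and strict monotonicity of $\mathds W(\bar\rho,\cdot)$. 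The Marchenko--Pastur form of $\Psi_{\min/\max}^{(1)}$ follows from the identities $\bar\rho+2\sqrt{\bar\rho}=\sqrt{\bar\rho}(2+\sqrt{\bar\rho})$ and $-\bar\rho+2\sqrt{\bar\rho}=\sqrt{\bar\rho}(2-\sqrt{\bar\rho})$.

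For the SRSR claim I would specialize to $r=2s$, $\bar\rho=2\rho$, and apply both RIC bounds simultaneously. Writing $u=\sqrt{2\rho}$ and $K=(4+\sqrt{41})/5$, the high-probability upper bound on $\gamma(2s,n,p)=(1+\cmax)/(1-\cmin)$ becomes $((1+u)^2+t_0+\varepsilon)/((1-u)^2-t_0-\varepsilon)$, so condition \eqref{eq:Condition_SRSR} reduces to the scalar inequality $t_0+\varepsilon<T^\star:=\bigl(K^2(1-u)^2-(1+u)^2\bigr)/(1+K^2)$. The central algebraic step is to recast $T^\star$ into the product form that appears inside $\mathds W$ in \eqref{eq:CondLemma}. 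A direct verification gives $(K-1)/(K+1)=\sqrt{2\rho_0}$ (this is, in effect, the definition of $\rho_0$), from which one factors
\[
K^2(1-u)^2-(1+u)^2=\bigl[K(1-u)-(1+u)\bigr]\bigl[K(1-u)+(1+u)\bigr]=(K+1)^2(\sqrt{2\rho_0}-u)(1-\sqrt{2\rho_0}\,u),
\]
and a short computation yields $(K+1)^2\sqrt{2\rho_0}/(1+K^2)=4/\sqrt{41}=\tau_0$. Therefore
\[
T^\star=2\tau_0(\sqrt{\rho}-\sqrt{\rho_0})\bigl(\sqrt{\rho}-1/(2\sqrt{\rho_0})\bigr),
\]
which is positive on $(0,\rho_0)$ since both factors are negative.

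To convert $t_0<T^\star$ into an explicit lower bound on $\delta$, monotonicity of $\mathds W(\bar\rho,\cdot)$ rephrases it as $\delta^{-1}\H(\bar\rho\delta)<\mathds W(2\rho,T^\star)$, and the entropy inequality $\H(x)\leq x\log(e/x)$ yields the sufficient form $\bar\rho\log(e/(\bar\rho\delta))<\mathds W(2\rho,T^\star)$, which rearranges exactly into $\delta>\Psi_0^{(1)}(\rho,\mathds W)$ as defined in \eqref{eq:CondLemma}. Applying Theorem~\ref{thm:Condition_SRSR} on the high-probability event (both RIC bounds hold, each contributing one $c(2\rho)e^{-nD_1}$ term) then delivers SRSR with probability at least $1-2c(2\rho)e^{-nD_1(\delta,\rho)}$ for some $D_1(\delta,\rho)>0$ extracted from $\mathds W(2\rho,t_0+\varepsilon)-\mathds W(2\rho,t_0)$ after choosing $\varepsilon>0$ small enough that $t_0+\varepsilon<T^\star$. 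I expect the main obstacle to be precisely the algebraic identification of $T^\star$: the three constants $\rho_0$, $\tau_0$, and the SRSR threshold $(4+\sqrt{41})^2/25$ are engineered to match, and the two identities $(K-1)/(K+1)=\sqrt{2\rho_0}$ and $(K+1)^2\sqrt{2\rho_0}/(1+K^2)=\tau_0$ are the one delicate step; the rest is a clean union-bound plus entropy-inequality computation.
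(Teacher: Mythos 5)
Your proposal is correct and follows the same overall architecture as the paper's proof: a union bound over $\binom{p}{r}$ supports combined with the entropy estimate to absorb the combinatorial factor (giving the RIC bounds with $t=t_0+\varepsilon$), followed by reduction of Condition~\eqref{eq:Condition_SRSR} on the intersection of the two RIC events to the scalar threshold condition $t_0<T^\star$, and finally the inequality $\H(x)\le x\log(e/x)$ to translate $\delta>\Psi_0^{(1)}$ into $t_0<T^\star$. The one genuine difference is the algebraic identification of $T^\star$. The paper simply expands $(1-\gamma_0)+2\sqrt{2}(1+\gamma_0)\sqrt{\rho}+2(1-\gamma_0)\rho+(1+\gamma_0)t$ and verifies by hand that it equals $\frac{2(4+\sqrt{41})}{25\sqrt{41}}(t-t_*)$. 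You instead factor
\[
K^2(1-u)^2-(1+u)^2=(K+1)^2(\sqrt{2\rho_0}-u)(1-\sqrt{2\rho_0}\,u)
\]
using the two identities $(K-1)/(K+1)=\sqrt{2\rho_0}$ and $(K+1)^2\sqrt{2\rho_0}/(1+K^2)=\tau_0$ (both of which I verified; e.g.\ $(K-1)/(K+1)=(\sqrt{41}-5)/4$ and squaring gives $(33-5\sqrt{41})/8=2\rho_0$). This is cleaner and more illuminating, since it shows how $\rho_0$ and $\tau_0$ are not ad hoc constants but are exactly the quantities that make the factorization work, and it makes the positivity of the denominator $(1-u)^2-t$ an immediate consequence of $t<T^\star$ rather than a separate observation. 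Two tiny points you glossed over but which cause no trouble: you should state that $T^\star\in(0,\tau_0)$ on $0<\rho<\rho_0$ (so that the increasing-on-$[0,\tau_0)$ hypothesis on $\mathds W(\bar\rho,\cdot)$ can be invoked to apply $\mathds W^{-1}$), and you implicitly use the elementary bound $\binom{p}{r}\le e^{p\H(r/p)}$ where the paper tracks the sharper Stirling constant $\Theta$ — either suffices since the extra polynomial factor is absorbed into $D_2=D/2$.
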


\subsection{Using extreme singular values deviations}
\label{sec:dev_singular_to_RIC}
A similar result can be derived from deviations on singular values, a proof is given in Section~\ref{proof:key_singular}. 

\begin{thm}
\label{thm:key_singular}
Assume that for all $0<\bar\rho<2\rho_0$, the largest singular value  $\sigma_1$ and the smallest singular value $\sigma_r$ of a $(n\times r)$ matrix $\X_{r,n}$ where $r:=\lfloor \rho n\rfloor$ satisfy for all $n\geq n_0(\bar\rho)$,
\[
\forall 0<t<\sqrt{2\rho_0},\quad
\Prob\Big\{\big({\sigma_1}-(1+\sqrt{\bar\rho})\big)\vee\big((1-\sqrt{\bar\rho})-{\sigma_r}\big)\geq t\Big\}\leq c(\bar\rho)e^{-n\mathds W(\bar\rho,t)}
\]
where $n_0(\bar\rho)\geq 2$ and $c(\bar\rho)>0$ may both depend on $\bar\rho$, the function $t\mapsto\mathds W(\bar\rho,t)$ is continuous and increasing on $[0,\sqrt{2\rho_0})$ such that $\mathds W(\bar\rho,0)=0$. Then for any $0<\delta<1$ and $0<\rho<\rho_0$ 
such that
\eq
\label{eq:CondLemma2}
\delta>\Psi_0^{(2)}(\rho,\mathds W):={\frac{1}{2\rho}}\exp\Big\{1-\frac{1}{2\rho}{\mathds W\big[2\rho,\sqrt{2\rho_0}-\sqrt{2\rho}\big]}\Big\}\,,
\qe
and for any sequence of $(n\times p)$ matrices $(\M^{(n)})_{n\geq2}$ with $n/p\to\delta$ satisfying \eqref{singular_model}, it holds that
\[
\Prob\Big\{{\M^{(n)}}\ \mathrm{satisfies}\ \eqref{eq:Condition_SRSR}\ \mathrm{with}\ s\leq s_0=\lfloor \rho n\rfloor\Big\}
\geq1-2c(2\rho)e^{-nD_1(\delta,\rho)}
\to1
\]
for some $D_1(\delta,\rho)>0$ that may depend on $\delta$ and $\rho$.

Furthermore, for all $\varepsilon>0$ and for all $\bar\rho$ and $\delta$ such that ${\delta}^{-1}\H(\bar\rho\delta)$ belongs to the range of~$\mathds W(\bar\rho,\cdot)$, it holds
\begin{align}
\label{eq:bound_RIC_singular_min}\P\Big\{\cmin(\lfloor \bar\rho n\rfloor,\M^{(n)}) \geqslant \Psi_{\min}^{(2)}(\delta,\bar\rho,\mathds W) +\varepsilon\Big\} &\leqslant c(\bar\rho)e^{-nD_2(\bar\rho,\delta,\varepsilon)},\\
\label{eq:bound_RIC_singular_max}\P\Big\{\cmax(\lfloor \bar\rho n\rfloor,\M^{(n)}) \geqslant\Psi_{\max}^{(2)}(\delta,\bar\rho,\mathds W)+\varepsilon\Big\} &\leqslant c(\bar\rho)e^{-nD_2(\bar\rho,\delta,\varepsilon)},
\end{align}
where $D_2(\bar\rho,\delta,\varepsilon)>0$ and we denote 
\begin{align}
\label{Psi_rect}
\Psi_{\min}^{(2)}(\delta,\bar\rho,\mathds W)&:=\min\{1,(\sqrt{\bar\rho}+t_0)(2-\sqrt{\bar\rho}-t_0)\}\,,\\
\notag
\Psi_{\max}^{(2)}(\delta,\bar\rho,\mathds W)&:=(\sqrt{\bar\rho}+t_0)(2+\sqrt{\bar\rho}+t_0)\,,
\end{align}
with $t_0:=\mathds W^{-1}(\bar\rho,{\delta}^{-1}\H(\bar\rho\delta))$. 
\end{thm}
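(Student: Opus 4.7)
The plan is to mirror the proof of Theorem \ref{thm:key_eigen} but in the singular value setting, exploiting the identity $\sigma_i(\M_R)^2 = \lambda_i(\M_R^\star \M_R)$ together with a standard support union bound. First, fix $r = \lfloor \bar\rho n\rfloor$ and $t \in [0, \sqrt{2\rho_0})$. By \eqref{singular_model} and a union bound over the $\binom{p}{r}$ choices of support $R$,
\[
\Prob\Big\{\max_{|R|=r}\sigma_1(\M_R) \geq 1+\sqrt{\bar\rho}+t\Big\} \vee \Prob\Big\{\min_{|R|=r}\sigma_r(\M_R) \leq 1-\sqrt{\bar\rho}-t\Big\} \leq \binom{p}{r}c(\bar\rho)e^{-n\mathds W(\bar\rho,t)}.
\]
The classical estimate $\binom{p}{r} \leq e^{p\H(r/p)}$ combined with the asymptotic regime $n/p\to\delta$ rewrites the right-hand side as $c(\bar\rho)\exp\{-n(\mathds W(\bar\rho,t) - \delta^{-1}\H(\bar\rho\delta))\}$. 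Choosing $t = t_0 + \varepsilon$ with $t_0 := \mathds W^{-1}(\bar\rho, \delta^{-1}\H(\bar\rho\delta))$ produces the positive exponent $D_2(\bar\rho,\delta,\varepsilon) := \mathds W(\bar\rho,t_0+\varepsilon) - \mathds W(\bar\rho,t_0)$, strictly positive by the assumed monotonicity of $\mathds W(\bar\rho,\cdot)$.

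Next, I convert these singular value tail bounds into RIC estimates via the variational formulas
\[
1 + \cmax(r,\M) = \max_{|R|=r}\sigma_1(\M_R)^2, \qquad 1 - \cmin(r,\M) = \min_{|R|=r}\sigma_r(\M_R)^2.
\]
Squaring the upper bound on $\sigma_1$ gives $\cmax \leq (\sqrt{\bar\rho}+t_0+\varepsilon)(2+\sqrt{\bar\rho}+t_0+\varepsilon)$, matching $\Psi_{\max}^{(2)}$ up to the $\varepsilon$ shift. For $\cmin$, the analogous lower bound $\sigma_r^2 \geq (1-\sqrt{\bar\rho}-t_0-\varepsilon)^2$ is valid only when $\sqrt{\bar\rho}+t_0+\varepsilon < 1$, since squaring is monotone only on $[0,\infty)$; the trivial bound $\cmin \leq 1$ covers the complementary regime, which is exactly why the formula in \eqref{Psi_rect} takes the form $\min\{1, (\sqrt{\bar\rho}+t_0)(2-\sqrt{\bar\rho}-t_0)\}$. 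Combined with the union bound of Step 1, this establishes \eqref{eq:bound_RIC_singular_min} and \eqref{eq:bound_RIC_singular_max}.

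Specialising to $\bar\rho = 2\rho$ and $r = 2s_0$ assembles the RIC estimates into
\[
\gamma(2s_0,n,p) = \frac{1+\cmax(2s_0,\M)}{1-\cmin(2s_0,\M)} \leq \bigg(\frac{1+\sqrt{2\rho}+t_0}{1-\sqrt{2\rho}-t_0}\bigg)^2.
\]
Theorem \ref{thm:Condition_SRSR} requires $\gamma < (4+\sqrt{41})^2/25$, i.e., $(1+u)/(1-u) < (4+\sqrt{41})/5$ with $u := \sqrt{2\rho}+t_0$. Solving the equality case yields $u = (\sqrt{41}-5)/4$, whose square equals $(33-5\sqrt{41})/8 = 2\rho_0$; the SRSR condition therefore reduces cleanly to $t_0 < \sqrt{2\rho_0}-\sqrt{2\rho}$, equivalently $\delta^{-1}\H(2\rho\delta) < \mathds W(2\rho, \sqrt{2\rho_0}-\sqrt{2\rho})$. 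To make this condition explicit in $\delta$ I use the standard entropy estimate $\H(x) \leq x(1-\log x)$ on $(0,1)$, a consequence of $-(1-x)\log(1-x) \leq x$. Substituting $x = 2\rho\delta$ and solving for $\delta$ produces precisely $\Psi_0^{(2)}(\rho,\mathds W)$, which is \eqref{eq:CondLemma2}; the SRSR probability bound with exponent $D_1(\delta,\rho)$ follows from the two tail estimates of Step~1, with the factor $2c(2\rho)$ absorbing the two inequalities.

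The main obstacle is structural rather than technical: the asymmetry between the two tails, arising because squaring interacts cleanly with the upper singular value bound but only conditionally with the lower one, forces the $\min\{1,\cdot\}$ in $\Psi_{\min}^{(2)}$ and the restriction $\bar\rho \in (0,2\rho_0)$. The threshold $\sqrt{2\rho_0} = (\sqrt{41}-5)/4$ then reappears on both the singular value side (through the admissibility requirement $1-\sqrt{\bar\rho}-t > 0$) and the SRSR side (through Theorem \ref{thm:Condition_SRSR}), which is the structural reason the bound takes the specific form $\sqrt{2\rho_0}-\sqrt{2\rho}$ inside $\mathds W$.
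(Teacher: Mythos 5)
Your proposal follows essentially the same route as the paper: union bound over supports, Stirling/entropy control of the binomial factor, squaring the extreme singular value bounds to obtain RIC bounds, and then reducing the SRSR condition through Theorem~\ref{thm:Condition_SRSR} via the algebra around $\sqrt{2\rho_0}=(\sqrt{41}-5)/4$. Your explanation of the $\min\{1,\cdot\}$ in $\Psi_{\min}^{(2)}$ (squaring is only monotone on $[0,\infty)$, so the bound on $\sigma_r$ is useful only when $\sqrt{\bar\rho}+t<1$, the trivial $\cmin\le 1$ covering the remaining regime) is a correct and more explicit account than the paper's terse ``Following the same arguments.''

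There is, however, one genuine imprecision worth naming. You choose $t=t_0+\varepsilon$, which controls $\P\{\sigma_1\ge 1+\sqrt{\bar\rho}+t_0+\varepsilon\}$ with exponent $\mathds W(\bar\rho,t_0+\varepsilon)-\mathds W(\bar\rho,t_0)$. Squaring the complementary event yields $\cmax<(\sqrt{\bar\rho}+t_0+\varepsilon)(2+\sqrt{\bar\rho}+t_0+\varepsilon)$, and this threshold is \emph{strictly larger} than $\Psi_{\max}^{(2)}+\varepsilon=(\sqrt{\bar\rho}+t_0)(2+\sqrt{\bar\rho}+t_0)+\varepsilon$ (the difference is $\varepsilon(1+2\sqrt{\bar\rho}+2t_0+\varepsilon)>0$). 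Since $\{\cmax\ge A\}\subsetneq\{\cmax\ge B\}$ when $A>B$, your bound applies to a smaller event than the one in \eqref{eq:bound_RIC_singular_max} and the phrase ``matching $\Psi_{\max}^{(2)}$ up to the $\varepsilon$ shift'' does not close the gap. The fix is exactly what the paper does: given $\varepsilon>0$ set $f(\bar\rho,\delta,\varepsilon):=\sqrt{(1+\sqrt{\bar\rho}+t_0)^2+\varepsilon}-(1+\sqrt{\bar\rho}+t_0)>0$, observe that $\cmax\ge\Psi_{\max}^{(2)}+\varepsilon$ is equivalent to $\sigma_1\ge 1+\sqrt{\bar\rho}+t_0+f$, and take $D_2:=\tfrac12\big[\mathds W(\bar\rho,t_0+f)-\mathds W(\bar\rho,t_0)\big]$. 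The analogous adjustment is needed on the $\cmin$ side. Once this nonlinear change of scale is made explicit, your argument and the paper's coincide.

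Two smaller remarks. Your inequality $\gamma(2s_0,n,p)\le\big(\tfrac{1+\sqrt{2\rho}+t_0}{1-\sqrt{2\rho}-t_0}\big)^2$ should be stated on the high-probability event with a parameter $t$ strictly between $t_0$ and $t_*:=\sqrt{2\rho_0}-\sqrt{2\rho}$, not with $t_0$ itself; the paper then picks an intermediate $t_{\rho,\delta}$ to define $D_1(\rho,\delta)$. Also, $1-\cmin=\min\{1,\min_{|R|=r}\sigma_r(\M_R)^2\}$ rather than equality without the truncation, though this is precisely the point your $\min\{1,\cdot\}$ discussion already makes.
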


{Theorems \ref{thm:key_eigen} and \ref{thm:key_singular} give a general method to derive bounds on RICs from deviation inequalities satisfied by the extreme eigenvalues or singular values of a random matrix. In the following subsection, three known deviation inequalities are used to provide such bounds for Gaussian and Rademacher matrices.}

\subsection{State-Of-The-Art deviation inequalities}\label{sec:state_art_dev}
{The asymptotic behavior of extreme eigenvalues of random covariance matrices with iid entries has been known for some years. From this behavior and the concentration of measure phenomenon, we present what is expected for deviation inequalities for extreme eigenvalues of such matrices with sub-Gaussian entries. This is what we call ``ideal deviations''. The next two paragraphs are devoted to deviation inequalities for Gaussian matrices due to Davidson and Szarek \cite{davidson2001local}, and Ledoux and Rider~\cite{ledoux2010small}. The last paragraph focuses on a deviation inequality for Rademacher matrices, proved by Feldheim and Sodin~\cite{feldheim2010universality}.}

In what follows, the bounds on RICs are written in terms of parameters $\bar\rho=\frac{r}{n}$ and $\delta=\frac{n}{p}$. When expressing Condition \eqref{eq:CondLemma} or \eqref{eq:CondLemma2}, we use parameter $\rho=\frac{s}{n}$, with $s=r/2$. Note that throughout this paper, it holds that $\bar\rho=2\rho$.

\subsubsection{Ideal deviations}
\label{sec:ideal_dev}
The asymptotic behavior of extreme eigenvalues for random covariance matrices was first established for matrices with Gaussian entries \cite{Jo_2000_largest_eigenvalue_complex,BoFo_2003_hard_soft_edge_transition} and extended to ones with more general entries in \cite{soshnikov_2002_largest_cov,peche_2009_largest_cov,feldheim2010universality,pillai_yin_2014_univ_cov,wang_2012_cov_edge}.
The largest eigenvalue fluctuations are described by the following:
\eq
\notag
\bigg[\frac{n\brho^{1/4}}{ (1+\sqrt{\brho})^2}\bigg]^{\frac23}(\lambda_1-(1+\sqrt{\brho})^2) \overset{(d)}{\underset{n \to \infty}{\to}}
F_1,
\qe
where $F_1$ is the so-called Tracy-Widom law. As for the smallest eigenvalue, when $\brho<1$ (which is true in our setting),
\eq
\notag
\bigg[\frac{n\brho^{1/4}}{(1-\sqrt{\brho})^2}\bigg]^{\frac23}({(1-\sqrt{\brho})^2-\lambda_r}) \overset{(d)}{\underset{n \to \infty}{\to}}
F_1.
\qe

We focus on the largest eigenvalue $\lambda_1$ and write:
\begin{align*}
 \P(\lambda_1\geqslant (1+\sqrt{\brho})^2+t) & = \P\bigg(\Big[\frac{n\brho^{1/4}}{ (1+\sqrt{\brho})^2}\Big]^{\frac23}(\lambda_1-(1+\sqrt{\brho})^2)\geqslant \frac{n^{2/3}\brho^{1/6}}{(1+\sqrt{\brho})^{4/3}}t\bigg).
\end{align*}
This deviation probability is therefore expected to be close to \[1-F_1\bigg(\frac{n^{2/3}\brho^{1/6}}{(1+\sqrt{\brho})^{4/3}}t\bigg),\] where $F_1$ is the cdf of the Tracy-Widom distribution. Thus it is expected to be close to the tail behavior of $F_1$ at $\infty$, which is actually known:
\[ 1-F_1(x) \underset{x \to \infty}{\sim} e^{-\frac23 x^{3/2}}.\]
As a consequence, deviation inequalities for the largest eigenvalue are expected to conform to
\begin{equation}
\notag
 \P\Big(\lambda_1\geqslant (1+\sqrt{\brho})^2+t\Big) \leqslant C\exp\Big(-c\frac{\brho^{1/4}}{(1+\sqrt{\brho})^2}nt^{3/2}\Big),
\end{equation}
at least for $t$ of the order of the spectrum width (which behaves asymptotically as $\mathcal{O}(\sqrt{\brho})$). 
For bigger $t$, due to the concentration of measure phenomenon, the expected behavior is the following:
\begin{equation}
\notag
\P\Big(\lambda_1\geqslant (1+\sqrt{\brho})^2+t\Big) \leqslant Ce^{-cn\min(t,t^2)}. 
\end{equation}
Similar results should hold for the smallest eigenvalue, except that $\lambda_r\geqslant 0$ almost surely and therefore only moderate deviations can occur. See \cite{ledoux_deviation_inequalities} for a detailed survey on this subject and \cite[Page 1322]{ledoux2010small} for a specific discussion on the change of behavior occurring around $t=\mathcal{O}(\sqrt{\brho})$.

Considering these expected deviation inequalities, it may be possible to prove the following for sub-Gaussian random matrices.
\[
\forall t>0,\quad
\Prob\Big\{\lambda_1-(1+\sqrt{\brho})^2\geq t\Big\}\leq c(\brho)e^{-n\mathds W_{\mathrm{TW}}(\brho,t)}
\]
where $\lambda_1$ denotes the largest eigenvalue of a $(r\times r)$ covariance matrix $\C_{r,n}$ with {\it iid} sub-Gaussian entries and 
\begin{align}\label{eq:ideal_rate}
\mathds W_{\mathrm{TW}}(\brho,t)&:=\frac{1}{C}\bigg\{\frac{\brho^{\frac14}}{(1+\sqrt{\brho})^2} t^{\frac32}\mathbbm{1}_{t\leqslant \sqrt{\brho}}+ \frac{t^2}{(1+\sqrt{\brho})^2} \mathbbm{1}_{\sqrt{\brho}<t\leqslant 1}+\frac{t}{(1+\sqrt{\brho})^2} \mathbbm{1}_{t>1}\bigg\}\,,
\end{align}
where $C>0$. A similar deviation inequality may be established for the smallest eigenvalue~$\lambda_r$ with almost a similar $\mathds W$ function (the $(1+\sqrt{\brho})^2$ terms should be replaced by $(1-\sqrt{\brho})^2$).  
We should obtain
\begin{align*}
\mathds W_{\mathrm{TW}}^{-1}(\brho,u)& =
\underbrace{\frac{C^{2/3}(1+\sqrt{\brho})^{4/3}}{\brho^{\frac16}} u^{\frac23}}_{\text{small deviation part}}
\mathbbm{1}_{u\leqslant \frac{\brho}{C(1+\sqrt{\brho})^2}}
\\
& \quad + \quad
\underbrace{ C^{1/2}(1+\sqrt{\brho})\sqrt{u}}_{\text{moderate deviation part}} 
\mathbbm{1}_{\frac{\brho}{C(1+\sqrt{\brho})^2}<u\leqslant \frac{1}{C(1+\sqrt{\brho})^2}}
+ 
\underbrace{C(1+\sqrt{\brho})^2u}_{\text{large deviation part}} \mathbbm{1}_{u>\frac{1}{C(1+\sqrt{\brho})^2}}.
\end{align*}
Theorem \ref{thm:key_eigen} could then be invoked to get bounds on RICs,
\begin{align*}
 \cmax & \leqslant \sqrt{\brho}(2+\sqrt{\brho})+t_0,\\
 \cmin & \leqslant \sqrt{\brho}(2-\sqrt{\brho})+t_0,
\end{align*}
where $t_0=\mathds W_{\mathrm{TW}}^{-1}\big(\brho,\frac{1}{\delta}\H(\brho\delta)\big)$. 
\begin{rem}
Note that in the two asymptotic Regimes \eqref{item:rho_0_delta_fixed} and~\eqref{item:rho_0_delta_0} of~\cite{
bah2014asymptotics}, $\frac{1}{\delta}\H(\brho\delta)$ will be larger than $\frac{\brho}{C(1+\sqrt{\brho})^2}$. Therefore it seems that the most important part in the rate function~\eqref{eq:ideal_rate} for our present use is the moderate and large deviation parts, arising from the concentration of measure phenomenon.
\end{rem}

\subsubsection{Davidson and Szarek's deviations}
\label{sqec:DS}
Consider a $(r\times n)$ matrix $\X$ with {\it iid} standard Gaussian entries. In the paper \cite{davidson2001local}, Davidson and Szarek have shown that for all $0<\brho<1$ it holds
\[
\forall t>0,\quad
\Prob\Big\{\Big(\frac{\sigma_1(\X)}{\sqrt{n}}-(1+\sqrt{\brho})\Big)\vee\Big((1-\sqrt{\brho})-\frac{\sigma_r(\X)}{\sqrt{n}}\Big)\geq t\Big\}\leq 2e^{-n\mathds W_{\mathrm{DS}}(\brho,t)}
\]
where $\sigma_i(\X)$ denotes the singular values of $\X$ and $\mathds W_{\mathrm{DS}}(\brho,t):={t^2}/2$, see \cite[page 291]{foucart2013mathematical} for instance. This inequality relies on the concentration of measure phenomenon. Note that
\[\mathds W_{\mathrm{DS}}^{-1}(\brho,u)=\sqrt{2u}.\]
Theorem \ref{thm:key_singular} applied here gives the following high probability bounds on RICs
\begin{align*}
 \cmax & \leqslant (\sqrt{\brho}+t_0)(2+\sqrt{\brho}+t_0),\\
 \cmin & \leqslant (\sqrt{\brho}+t_0)(2-\sqrt{\brho}-t_0),
\end{align*}
where $t_0=\sqrt{\frac{2}{\delta}\H(\brho\delta)}$. 

\begin{rem}
In the three asymptotic Regimes \eqref{item:rho_0_delta_fixed}, \eqref{item:rho_fixed_delta_0} and \eqref{item:rho_0_delta_0}, these bounds on RICs behave similarly to the ones obtained by Bah and Tanner in \cite{bah2014asymptotics}, except that constants are better in \cite{bah2014asymptotics}. Note that this deviation has been used in the paper \cite[Lemma 3.1]{MR2243152} to bound the RIP constant. 
\end{rem}

Furthermore, Theorem \ref{thm:key_singular} states that Condition \eqref{eq:Condition_SRSR} is satisfied with high probability whenever
\[ \delta > \frac{1}{2\rho}\exp\Big[1-\frac{1}{2\rho}(\sqrt{\rho_0}-\sqrt{\rho})^2 \Big].\]
When $\rho$ is small (which is the case in the Regimes \eqref{item:rho_0_delta_fixed} and \eqref{item:rho_0_delta_0}), this condition approximately writes
\[ \delta > \frac{1}{2\rho}\exp\Big[-\frac{\rho_0}{2\rho}\Big].\]
\subsubsection{Ledoux and Rider's deviations} 
Ledoux and Rider proved in \cite{ledoux2010small} small deviation inequalities for $\beta$ Hermite and Laguerre Ensembles. Their work rely on the tridiagonal model for these matrix ensembles and on a variational formulation of the Tracy-Widom distribution. For real covariance matrices, their deviation inequality for the largest eigenvalue is the following. For all $0<\brho<1$ and for all $n\geq2$, setting $r=\lfloor \brho n\rfloor$,
\[
\forall t>0,\quad
\Prob\Big(\lambda_1-(1+\sqrt{\brho})^2\geq t\Big)\leq c(\brho)e^{-n\mathds W_{\mathrm{LR}}^{\max}(\brho,t)}
\]
where $\lambda_1$ denotes the largest eigenvalue of a $(r\times r)$ covariance matrix $\C_{r,n}$ with {\it iid} standard Gaussian entries and 
\begin{align*}
\mathds W_{\mathrm{LR}}^{\max}(\brho,t)&:=\frac{\brho\,^{\frac14}}{C_{\mathrm{LR}}(1+\sqrt{\brho})^3} t^{\frac32}\mathbbm{1}_{t\leqslant \sqrt{\brho}(1+\sqrt{\brho})^2}+ \frac{\brho\,^{\frac12}}{C_{\mathrm{LR}}(1+\sqrt{\brho})^2} t\mathbbm{1}_{t> \sqrt{\brho}(1+\sqrt{\brho})^2}\,,
\end{align*}
where $C_{\mathrm{LR}}>0$ may be bounded explicitly from \cite{ledoux2010small}.
As explained in Section~\ref{sec:devtoRIC}, the dependency of function $\mathds W$ in parameter $\brho$ is of crucial importance in our analysis. Therefore, we choose to write the most precise deviation inequalities the paper reached, even in the case when $r/n$ is bounded. For $\lambda_r$, we follow the procedure explained in \cite[Section 5, page 1338]{ledoux2010small} to write the following
\[
\forall t>0,\quad
\Prob\Big((1-\sqrt{\brho})^2-\lambda_r\geq t\Big)\leq c(\brho)e^{-n\mathds W_{\mathrm{LR}}^{\min}(\brho,t)}
\]
where
\begin{align*}
\mathds W_{\mathrm{LR}}^{\min}(\brho,t)&:=\frac{\brho^{\frac14}}{C_{\mathrm{LR}}(1-\sqrt{\brho})^3} t^{\frac32}\mathbbm{1}_{t\leqslant \sqrt{\brho}(1-\sqrt{\brho})^2}+ \frac{\brho^{\frac12}}{C_{\mathrm{LR}}(1-\sqrt{\brho})^2} t\mathbbm{1}_{t> \sqrt{\brho}(1-\sqrt{\brho})^2}\,.
\end{align*}
In order to simplify the analysis of the phase transition, observe that $\mathds W_{\mathrm{LR}}^{\max}(\brho,t)\leqslant \mathds W_{\mathrm{LR}}^{\min}(\brho,t)$ for all $\brho$ and $t$. This yields
\[
\forall t>0,\quad
\Prob\Big\{\big(\lambda_1-(1+\sqrt{\brho})^2\big)\vee\big((1-\sqrt{\brho})^2-\lambda_r\big)\geq t\Big\}\leq c(\brho)e^{-n\mathds W_{\mathrm{LR}}(\brho,t)}
\]
where 
\begin{align*}
\mathds W_{\mathrm{LR}}(\brho,t)&:=\mathds W_{\mathrm{LR}}^{\max}(\brho,t)\,.
\end{align*}
Therefore
\[\mathds W_{\mathrm{LR}}^{-1}(\brho,u)=C_{\mathrm{LR}}^{2/3}\frac{(1+\sqrt{\brho})^2}{\brho^{1/6}}u^{2/3}\mathbbm{1}_{u\leqslant \frac{\brho}{C_{\mathrm{LR}}}}+ C_{\mathrm{LR}}\frac{(1+\sqrt{\brho})^2}{\sqrt{\brho}}u\mathbbm{1}_{u> \frac{\brho}{C_{\mathrm{LR}}}}.\]
Theorem \ref{thm:key_eigen} applied here gives the following high probability bounds on RICs:
\begin{align*}
 \cmax & \leqslant \sqrt{\brho}(2+\sqrt{\brho})+t_0,\\
 \cmin & \leqslant \sqrt{\brho}(2-\sqrt{\brho})+t_0,
\end{align*}
where $t_0=\mathds W_{\mathrm{LR}}^{-1}(\brho,\frac{1}{\delta}\H(\brho\delta))$. 
\begin{rem}
In the three asymptotic Regimes \eqref{item:rho_0_delta_fixed}, \eqref{item:rho_fixed_delta_0} and \eqref{item:rho_0_delta_0}, it may be shown that $\frac{1}{\delta}\H(\brho\delta)\geqslant \frac{\brho}{C_{\mathrm{LR}}}$. These bounds on RICs behave similarly to the ones obtained by Bah and Tanner in \cite{bah2014asymptotics} in Regime~\eqref{item:rho_0_delta_fixed}, except that their constants are better. In Regimes~\eqref{item:rho_fixed_delta_0} and \eqref{item:rho_0_delta_0}, they behave badly compared to those of \cite{bah2014asymptotics}. 
\end{rem}

Furthermore, Theorem \ref{thm:key_eigen} states that Condition \eqref{eq:Condition_SRSR} is satisfied with high probability whenever
\[ \delta > \frac{1}{2\rho}\exp\Big\{1-\frac{1}{2\rho}\mathds W_{\mathrm{LR}}\Big[2\rho,2\tau_0(\sqrt{\rho}-\sqrt{\rho_0})\big(\sqrt{\rho}-\frac{1}{2\sqrt{\rho_0}}\big)\Big] \Big\}.\]
When $\rho$ is small (which is the case in Regimes \eqref{item:rho_0_delta_fixed} and \eqref{item:rho_0_delta_0}), the second argument in $\mathds W_{\mathrm{LR}}$ is approximately $\tau_0$ and this condition approximately writes
\[ \delta > \frac{1}{2\rho}\exp\Big[-\frac{\tau_0}{C_{\mathrm{LR}}\sqrt{2\rho}}\Big].\]

\subsubsection{Feldheim and Sodin's deviations}
For all $0<\brho<1$ and for all $n\geq n_0$, setting $r=\lfloor \brho n\rfloor$ it follows from \cite{feldheim2010universality} that
\[
\forall t>0,\quad
\Prob\Big\{\big(\lambda_1-(1+\sqrt{\brho})^2\big)\vee\big((1-\sqrt{\brho})^2-\lambda_r\big)\geq t\Big\}\leq c(\brho)e^{-n\mathds W_{\mathrm{FS}}(\brho,t)}
\]
where $\lambda_i$ denotes the eigenvalues of a $(r\times r)$ covariance matrix $\C_{r,n}$ with {\it iid} Rademacher entries and 
\begin{align*}
\mathds W_{\mathrm{FS}}(\brho,t)&:=\frac{\brho\log(1+\frac{t}{2\sqrt{\brho}})^{\frac32}}{C_{\mathrm{FS}}(1+\sqrt{\brho})^2}\,,
\end{align*}
where $0<C_{\mathrm{FS}}<837$, as shown in Proposition \ref{prop:FS}. Furthermore
\[\mathds W_{\mathrm{FS}}^{-1}(\brho,u)=2\sqrt{\brho}\bigg\{\exp\Big(C_{\mathrm{FS}}^{2/3}\frac{(1+\sqrt{\brho})^{4/3}}{\brho^{2/3}}u^{2/3}\Big)-1 \bigg\}.\]
Theorem \ref{thm:key_eigen} applied here gives the following high probability bounds on RICs:
\begin{align*}
 \cmax & \leqslant \sqrt{\brho}(2+\sqrt{\brho})+t_0,\\
 \cmin & \leqslant \sqrt{\brho}(2-\sqrt{\brho})+t_0,
\end{align*}
where $t_0=\mathds W_{\mathrm{FS}}^{-1}(\brho,\frac{1}{\delta}\H(\brho\delta))$. 

Furthermore, Theorem \ref{thm:key_eigen} states that Condition \eqref{eq:Condition_SRSR} is satisfied with high probability whenever
\[ \delta > \frac{1}{2\rho}\exp\Big\{1-\frac{1}{2\rho}\mathds W_{\mathrm{FS}}\Big[2\rho,2\tau_0(\sqrt{\rho}-\sqrt{\rho_0})\big(\sqrt{\rho}-\frac{1}{2\sqrt{\rho_0}}\big)\Big] \Big\}.\]
When $\rho$ is small (which is the case in regimes \eqref{item:rho_0_delta_fixed} and \eqref{item:rho_0_delta_0}), the second argument in $\mathds W_{\mathrm{FS}}$ is approximately $\tau_0$ and this condition approximately writes
\[ \delta > \frac{1}{2\rho}\exp\Big[-\frac{|\log \rho|^{3/2}}{2^{3/2}C_{\mathrm{FS}}}\Big].\]

\begin{rem}
In the three asymptotic regimes \eqref{item:rho_0_delta_fixed}, \eqref{item:rho_fixed_delta_0} and \eqref{item:rho_0_delta_0}, these bounds behave badly compared to the ones by Bah and Tanner in \cite{bah2014asymptotics} (but note that we consider here entries which are not Gaussian anymore). This is indeed not surprising: from Section \ref{sec:ideal_dev}, it seems that the most important part in the rate function \eqref{eq:ideal_rate} is the moderate and large deviation behavior, whereas Feldheim and Sodin's inequality focuses only on small deviations. For matrices with independent Rademacher entries, concentration of measure phenomenon still occurs and one can use Talagrand's inequality to control large deviations of the largest singular value $\sigma_1$, which is a convex function of the entries. A similar inequality holds for matrices with independent sub-Gaussian entries:
\begin{equation}\label{eq:largest_sing_value_LPTJ}
\P\Big(\frac{\sigma_1(\X)}{\sqrt{n}}\geq t(1+\sqrt{\bar{\rho}})\Big) \leq e^{-cnt^2}, \quad \text{for all} \ t \geq C,
\end{equation}
where $\X$ is a $(r\times n)$ matrix with {\it iid} sub-Gaussian entries, with mean zero and variance $1$. Here $c,C>0$ depend on the sub-Gaussian moment of the entries and on $\bar{\rho}$ (see for example \cite{MR2146352}). Note that these constants might be computed explicitly but we did not pursue on this laborious task here. As far as we know, the more precise moderate and large deviation inequality for the smallest singular value of sub-Gaussian matrices was established by Rudelson and Vershynin (see for example \cite{RuVe_2010_non_asymptotic_extreme_singular_values}):
\begin{equation}\label{eq:smallest_sing_value_RV}
\P\Big(\frac{\sigma_r(\X)}{\sqrt{n}}\leq t(1-\sqrt{\bar{\rho}})\Big) \leq (Ct)^{n(1-\bar{\rho})}+e^{-cn}, \quad \text{for all} \ t \geq 0,
\end{equation}
where $c,C>0$ depend only on the sub-Gaussian moment of the entries. Again, these constants may be computed explicitly. Note that the $e^{-cn}$ term quantifies the fact that $\X$ is non singular with probability strictly less than $1$.

Combining Feldheim and Sodin's inequality with the preceding ones \eqref{eq:largest_sing_value_LPTJ} and \eqref{eq:smallest_sing_value_RV} would probably lead to a more accurate deviation inequality. However constants should be computed explicitly and the proof of Theorem \ref{thm:key_singular} modified in order to deal with the additive term $e^{-cn}$ in~\eqref{eq:smallest_sing_value_RV}. We will not pursue this task here.
\end{rem}

\subsection{Bounds on RICs and SRSR}
We summarize the bounds we obtained in the previous subsections. For sake of readability, we focus on the asymptotic Regime~\eqref{item:rho_0_delta_fixed}, in which $\brho \to 0$ and $\delta >0$ is fixed, so that the functions  $\Psi_{\min}^{(1)}$, $\Psi_{\max}^{(1)}$, $\Psi_{\min}^{(2)}$ and~$\Psi_{\max}^{(2)}$ have a simplest expression.

\medskip

\begin{center}
\begin{tabular}{| c || c | c |}
\hline
Inequality by & $\Psi_{\max}^{(1,2)}$  & $\Psi_{\min}^{(1,2)}$ \\
\hline
\hline
Davidson-Szarek & $2\sqrt{2}\sqrt{\brho|\log \brho|}+2\sqrt{\brho}+\sqrt{2}(1-\log \delta)\sqrt{\frac{\brho}{|\log \brho|}}+o\Big(\sqrt{\frac{\brho}{|\log \brho|}}\Big)$ & $= \Psi_{\max}^{(2)}$ \\
\hline
Ledoux-Rider & $C_{\mathrm{LR}}\sqrt{\brho}|\log \brho|+\Big[2+C_{\mathrm{LR}}\big(1-\log \delta\big)\Big]\sqrt{\brho}+o(\sqrt{\brho}) $ & $= \Psi_{\max}^{(1)}$ \\
\hline
Feldheim-Sodin & $2\sqrt{\brho}\exp\Big[C_{\mathrm{FS}}^{2/3}|\log \brho|^{2/3} \Big]+o\Big(\sqrt{\brho}\exp\Big[C_{\mathrm{FS}}^{2/3}|\log \brho|^{2/3} \Big]\Big)$ & $= \Psi_{\max}^{(1)}$  \\
\hline
\end{tabular}
\end{center}

\vspace{5mm}

We summarize next the conditions we obtained in the previous subsections on $\delta$ and $\rho$ so that Condition \eqref{eq:Condition_SRSR} is satisfied with high probability. For sake of readability again, this condition is written assuming that $\rho$ is small.

\vspace{5mm}

\begin{center}
\begin{tabular}{| c || c |}
\hline
Inequality by & Condition \eqref{eq:Condition_SRSR}\\
\hline
\hline
Davidson-Szarek & $ \delta > \frac{1}{2\rho}\exp\big[-\frac{\rho_0}{2\rho}\big]$ \\
\hline
Ledoux-Rider & $\delta > \frac{1}{2\rho}\exp\big[-\frac{\tau_0}{C_{\mathrm{LR}}\sqrt{2\rho}}\big]$ \\
\hline
Feldheim-Sodin  & $\delta > \frac{1}{2\rho}\exp\big[-\frac{|\log \rho|^{3/2}}{2^{3/2}C_{\mathrm{FS}}}\big]$ \\
\hline
\end{tabular}
\end{center}

\pagebreak[3]

\appendix 

\section{Proofs of the main results}

\subsection{Proof of Theorem~\ref{thm:Condition_SRSR}}
\label{app:proof_Condition_SRSR}

The proof follows the same guidelines as \cite[Proof of Theorem 6.13, page 145]{foucart2013mathematical}. A sufficient condition for SRSR is the $\ell_2$-robust null space property, see \cite[Theorem~4.22, page 88]{foucart2013mathematical}. Namely, we need to find constants $\kappa\in(0,1)$ and $\tau>0$ such that, for any $v\in\bbR^p$ and any $S\subset\{1,\ldots,p\}$ such that $|S|=s$,
\[
\|v_S\|_2\leq\frac\kappa{\sqrt s}\|v_{S^c}\|_1+\tau\|\M v\|_2\,.
\]
Given $v\in\bbR^p$, it is enough to consider $S=S_0$ the set of the $s$ largest (in magnitude) entries of $v$, $S_1$ the set of the $s$ largest (in magnitude) entries of $v$ in $S_0^c$, $S_2$ the set of the $s$ largest (in magnitude) entries of $v$ in $(S_0\cup S_1)^c$, etc. By definition of the RICs, one has
\begin{align*}
\|\M v_{S_0}\|_2^2=(1+&t)\|v_{S_0}\|_2^2\\
\mathrm{with}\quad-\cmin(2s,\M)\leq-\cmin(s,\M)\leq &t\leq\cmax(s,\M)\leq\cmax(2s,\M)\,.
\end{align*}
We begin with a first lemma. For sake of readability and from now on, $\cmin$ denotes $\cmin(2s,\M)$ and $\cmax$ denotes $\cmax(2s,\M)$.

\begin{lem}
\label{lem:p146}
For all $k\geq1$, it holds
\[
|\langle\M v_{S_0},\M v_{S_k}\rangle|\leq\sqrt{(\cmax-t)(\cmin+t)}\|v_{S_0}\|_2\|v_{S_k}\|_2\,.
\]
\end{lem}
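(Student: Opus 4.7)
The plan is to bound $|\langle\M v_{S_0},\M v_{S_k}\rangle|$ by applying the asymmetric RIP to unit vectors in the two-dimensional subspace $\mathrm{Span}(v_{S_0},v_{S_k})$, which sits inside $\Sigma_{2s}$ because the supports $S_0$ and $S_k$ are disjoint and of sizes at most $s$. After normalising to $\tilde u := v_{S_0}/\|v_{S_0}\|_2$ and $\tilde w := v_{S_k}/\|v_{S_k}\|_2$ (and possibly flipping the sign of $\tilde w$), write $c := \langle\M\tilde u,\M\tilde w\rangle\geq 0$ and let $t'\in[-\cmin,\cmax]$ be the unique number such that $\|\M\tilde w\|_2^2 = 1+t'$. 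The target bound becomes $c^2\leq(\cmax-t)(\cmin+t)$.

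Next I would exploit the one-parameter family of unit vectors $\cos\theta\,\tilde u + \sin\theta\,\tilde w\in\Sigma_{2s}$. A direct expansion gives
\[
\|\M(\cos\theta\,\tilde u + \sin\theta\,\tilde w)\|_2^2 = 1 + \tfrac{t+t'}{2} + \tfrac{t-t'}{2}\cos(2\theta) + c\sin(2\theta),
\]
whose oscillatory part has amplitude $r:=\sqrt{(\tfrac{t-t'}{2})^2+c^2}$. Taking the maximum and minimum in $\theta$ and sandwiching the result between $1-\cmin$ and $1+\cmax$ via the definition of the RICs produces $r\leq\cmax-\tfrac{t+t'}{2}$ and $r\leq\cmin+\tfrac{t+t'}{2}$. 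Squaring and applying the factorisation $A^2-B^2=(A-B)(A+B)$ cleanly yields
\[
c^2\leq(\cmax-t)(\cmax-t')\qquad\text{and}\qquad c^2\leq(\cmin+t)(\cmin+t').
\]

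The main obstacle, and the algebraic core of the argument, is the elimination of the unknown $t'$. The trick I would use is to reinterpret the two displayed inequalities as bilateral bounds
\[
-\cmin+\tfrac{c^2}{\cmin+t}\;\leq\; t'\;\leq\; \cmax-\tfrac{c^2}{\cmax-t},
\]
where the degenerate edge cases $t=\cmax$ or $t=-\cmin$ can be treated separately, as they force $c=0$ directly. Compatibility of these two bounds on $t'$ imposes
\[
\tfrac{c^2}{\cmax-t}+\tfrac{c^2}{\cmin+t}\;\leq\;(\cmax-t')+(\cmin+t')=\cmax+\cmin,
\]
and the identity $\tfrac{1}{\cmax-t}+\tfrac{1}{\cmin+t}=\tfrac{\cmax+\cmin}{(\cmax-t)(\cmin+t)}$ collapses this precisely to $c^2\leq(\cmax-t)(\cmin+t)$. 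Multiplying through by $\|v_{S_0}\|_2\|v_{S_k}\|_2$ recovers the stated inequality, with the polarisation-via-angle calculation being routine and the optimal elimination of $t'$ being where the proof actually earns its keep.
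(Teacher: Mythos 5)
Your proof is correct, but it takes a genuinely different route from the paper's. The paper proves the lemma by a polarization-with-free-parameters identity: it writes
\[
2|\langle\M u,\M w\rangle|=\frac1{\alpha+\beta}\Big[\|\M(\alpha u+w)\|_2^2-\|\M(\beta u-w)\|_2^2-(\alpha^2-\beta^2)\|\M u\|_2^2\Big],
\]
bounds the first term from above by $1+\cmax$, the second from below by $1-\cmin$, uses $\|\M u\|_2^2=1+t$, and then picks the optimal $\alpha=\sqrt{(\cmin+t)/(\cmax-t)}$, $\beta=\sqrt{(\cmax-t)/(\cmin+t)}$ to collapse the bound. You instead parametrize unit vectors in $\mathrm{Span}(\tilde u,\tilde w)\cap\Sigma_{2s}$ by an angle $\theta$, read off the amplitude $r=\sqrt{(\tfrac{t-t'}{2})^2+c^2}$ of the oscillatory part of $\theta\mapsto\|\M(\cos\theta\,\tilde u+\sin\theta\,\tilde w)\|_2^2$, sandwich it between the RIC bounds to obtain the two intermediate inequalities $c^2\leq(\cmax-t)(\cmax-t')$ and $c^2\leq(\cmin+t)(\cmin+t')$, and then eliminate the auxiliary $t':=\|\M\tilde w\|_2^2-1$ by adding $\tfrac{c^2}{\cmax-t}\leq\cmax-t'$ and $\tfrac{c^2}{\cmin+t}\leq\cmin+t'$. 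Both arguments are sound and of comparable length. What your route buys: the optimization is only one-dimensional (in $\theta$), the auxiliary quantity $t'$ and the two intermediate inequalities are made visible (showing the bound is tight when $t'$ balances them), and the appearance of the factor $(\cmax-t)(\cmin+t)$ is an algebraic identity rather than the output of a two-parameter optimization. What the paper's route buys: it is more self-contained, requires no elimination step, and avoids introducing the extra unknown $t'$, making the short computation entirely explicit once the ansatz for $\alpha,\beta$ is given.
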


\begin{proof}
Set $u=v_{S_0}/\|v_{S_0}\|_2$ and $w=\pm v_{S_k}/\|v_{S_k}\|_2$ where the sign of $w$ is chosen so that $|\langle\M u,\M w\rangle|=\langle\M u,\M w\rangle$. For $\alpha,\beta>0$ to be chosen later, it holds
\begin{align*}
2&|\langle\M u,\M w\rangle|=\frac1{\alpha+\beta}\Big[\|\M(\alpha u+w)\|_2^2-\|\M(\beta u-w)\|_2^2-(\alpha^2-\beta^2)\|\M u\|_2^2\Big]\\
&\leq \frac1{\alpha+\beta}\Big[(1+\cmax)\|\alpha u+w\|_2^2-(1-\cmin)\|\beta u-w\|_2^2-(\alpha^2-\beta^2)(1+t)\|u\|_2^2\Big]\\
&=\frac1{\alpha+\beta}\Big[(1+\cmax)(\alpha^2+1)-(1-\cmin)(\beta^2+1)-(\alpha^2-\beta^2)(1+t)\Big]\\
&=\frac1{\alpha+\beta}\Big[\alpha^2(\cmax-t)+\beta^2(\cmin+t)+\cmax+\cmin\Big]\,.
\end{align*}
Then, chose $\alpha=\sqrt{(\cmin+t)/(\cmax-t)}$ and $\beta=\sqrt{(\cmax-t)/(\cmin+t)}$ to get the desired inequality.
\end{proof}

\noindent
Using Lemma~\ref{lem:p146}, observe that
\begin{align*}
(1+t)\|v_{S_0}\|_2^2&=\|\M v_{S_0}\|_2^2\\
&=\langle\M v_{S_0},\M v\rangle-\sum_{k\geq1}\langle\M v_{S_0},\M v_{S_k}\rangle\\
&\leq \|\M v_{S_0}\|_2\|\M v\|_2+\sum_{k\geq1}\sqrt{(\cmax-t)(\cmin+t)}\|v_{S_0}\|_2\|v_{S_k}\|_2\\
&=\|v_{S_0}\|_2\Big[\sqrt{1+t}\|\M v\|_2+\sqrt{(\cmax-t)(\cmin+t)}\sum_{k\geq1}\|v_{S_k}\|_2\Big]\,.
\end{align*}
Now, Lemma 6.14 in \cite{foucart2013mathematical} gives that
\[
\sum_{k\geq1}\|v_{S_k}\|_2\leq\frac1{\sqrt s}\|v_{S_0^c}\|_1+\frac14\|v_{S_0}\|_2\,.
\]
We deduce that
\[
\|v_{S_0}\|_2\leq\frac b4\|v_{S_0}\|_2
			 +\frac{b}{\sqrt s}\|v_{S_0^c}\|_1
 			 +\frac{\|\M v\|_2}{\sqrt {1+t}}\,,
\]
where $b:={\sqrt{(\cmax-t)(\cmin+t)}}/{{(1+t)}}$. It follows that
\[
\|v_{S_0}\|_2\leq\frac{1}{\sqrt s}\frac{4b}{4-b}\|v_{S_0^c}\|_1
 			 +\frac{4}{4-b}\frac{\|\M v\|_2}{\sqrt {1+t}}\,.
\]
It suffices that $\kappa={4b}/({4-b})<1$ to get the $\ell_2$-robust null space property and hence SRSR. This is equivalent to $b={\sqrt{(\cmax-t)(\cmin+t)}}/{{(1+t)}}<4/5$. We have the following lemma.
\begin{lem}
For any $t\in[-\cmin,\cmax]$, it holds 
\[
\frac{\sqrt{(\cmax-t)(\cmin+t)}}{(1+t)}\leq\frac{\cmin+\cmax}{2\sqrt{(1-\cmin)(1+\cmax)}}\,.
\]
\end{lem}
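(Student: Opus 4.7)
The right-hand side does not depend on $t$, so the task is to maximize the function $f(t):=\sqrt{(\cmax-t)(\cmin+t)}/(1+t)$ over $t\in[-\cmin,\cmax]$ and check that $\max f$ equals the stated bound. The plan is to make a clean change of variables that linearizes the expression, then apply AM--GM.

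First I would set $a:=1-\cmin$ and $b:=1+\cmax$, observing that $\cmin+\cmax=b-a$ and $(1-\cmin)(1+\cmax)=ab$. Substituting $u:=1+t$, the interval $t\in[-\cmin,\cmax]$ becomes $u\in[a,b]$ with $\cmax-t=b-u$ and $\cmin+t=u-a$. The inequality is therefore equivalent to
\[
\frac{\sqrt{(b-u)(u-a)}}{u}\;\leq\;\frac{b-a}{2\sqrt{ab}}\qquad\text{for all }u\in[a,b].
\]

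To prove this, I would parametrize the segment as $u=(1-s)a+sb$ with $s\in[0,1]$. This immediately gives the identity $(b-u)(u-a)=(b-a)^2\,s(1-s)$, while AM--GM applied to the two non-negative summands $(1-s)a$ and $sb$ yields
\[
u=(1-s)a+sb\;\geq\;2\sqrt{ab\,s(1-s)}.
\]
Dividing the two displays produces
\[
\frac{\sqrt{(b-u)(u-a)}}{u}\;\leq\;\frac{(b-a)\sqrt{s(1-s)}}{2\sqrt{ab\,s(1-s)}}\;=\;\frac{b-a}{2\sqrt{ab}},
\]
and reverting the substitutions $a=1-\cmin$, $b=1+\cmax$ finishes the argument.

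There is no real obstacle beyond spotting the correct affine parametrization of $u$; once written as a convex combination of the endpoints, the bound is a one-line AM--GM. The only alternative worth mentioning is the direct route via calculus: differentiating $\log f(t)$ and locating the unique interior critical point $t^\star=(\cmax-\cmin-2\cmin\cmax)/(\cmax-\cmin+2)$, then simplifying $f(t^\star)$; this works but is considerably messier, which is exactly why the convex-combination trick is preferable.
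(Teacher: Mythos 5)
Your proof is correct and takes a genuinely different route from the paper. The paper proceeds by calculus: it sets $f(t)=(\cmax-t)(\cmin+t)/(1+t)^2$, computes $f'$, locates the unique critical point $t^\star=(\cmax-\cmin-2\cmax\cmin)/(2+\cmax-\cmin)$, and verifies by direct substitution that $f(t^\star)=(\cmin+\cmax)^2/\bigl(4(1-\cmin)(1+\cmax)\bigr)$. Your version instead makes the affine change of variables $a=1-\cmin$, $b=1+\cmax$, $u=1+t$, parametrizes $u=(1-s)a+sb$ so that $(b-u)(u-a)=(b-a)^2 s(1-s)$ exactly, and then controls the denominator with the single AM--GM inequality $u\geq 2\sqrt{ab\,s(1-s)}$; the edge cases $s\in\{0,1\}$ give left-hand side zero. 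This buys you a derivative-free, essentially algebraic argument that makes the structure of the bound transparent (the extremum occurs precisely at the AM--GM equality point $s=a/(a+b)$, i.e.\ $u=\sqrt{ab}$), whereas the paper's calculus route is more mechanical and requires verifying the value of $f$ at a somewhat opaque critical point. Both proofs implicitly use $\cmin<1$ so that $a>0$ and the right-hand side is real; this is inherited from the hypothesis of Theorem~\ref{thm:Condition_SRSR}.
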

\begin{proof}
Define $f(t)={{(\cmax-t)(\cmin+t)}}/{{(1+t)^2}}$ whose derivative is given by
\[
f'(t)=\frac{\cmax - \cmin - 2 \cmax \cmin - t (2 + \cmax-\cmin)}{(1+t)^3}
\]
We easily deduce that the function $f$ is upper bounded by the quantity $f(t^\star)$ where we denote $t^\star=(\cmax - \cmin - 2 \cmax \cmin)/(2 + \cmax-\cmin)$. Now, remark that it holds $f(t^\star)=(\cmin + \cmax)^2/(4 (1-\cmin)(1+\cmax))$. This gives the desired inequality.
\end{proof}

\noindent
It shows that SRSR holds whenever $({\cmin+\cmax})/{\sqrt{(1-\cmin)(1+\cmax)}}<8/5$. This last condition reads 
$\sqrt\gamma-1/\sqrt\gamma<8/5$ which is equivalent to $\sqrt\gamma<(4+\sqrt{41})/5$, where we denote $\gamma=\gamma(2s,\M)$. The desired condition follows.

\subsection{Proof of Theorem~\ref{thm:key_eigen}}
\label{sec:keylem}
We first prove \eqref{eq:bounds_RIC_eigen_max}. Set $\varepsilon>0$ and suppose that $\bar\rho$ and $\delta$ are such that $\frac{1}{\delta}\H(\bar\rho\delta)$ is in the range of $\mathds{W}(\bar\rho,\cdot)$. Set $t_0=\mathds{W}^{-1}\big(\bar\rho,\frac{1}{\delta}\H(\bar\rho\delta)\big)$. Then
\begin{align*}
\P\Big\{\cmax \geqslant \sqrt{\bar\rho}(2+\sqrt{\bar\rho})+t_0+\varepsilon\Big\} &=\P\Big\{1+\cmax \geqslant(1+\sqrt{\bar\rho})^2+t_0+\varepsilon\Big\}\\
& = \P\Big\{\exists x \in \Sigma_r \ \text{s.t.} \ \|\M x\|^2\geqslant \big((1+\sqrt{\bar\rho})^2+t_0+\varepsilon\big)\|x\|^2\Big\}\\
& \leqslant \sum_{\C_{r,n}} \P\Big\{\lambda_1(\C_{r,n}) \geqslant (1+\sqrt{\bar\rho})^2+t_0+\varepsilon\Big\}\\
&  \leqslant {p \choose r}c(\bar\rho)e^{-n\mathds W(\bar\rho,t_0+\varepsilon)}\\
   & \leqslant
c(\bar\rho)
   \Theta\,
e^{-n\mathds W(\bar\rho,t+\varepsilon)+p\H(r/p)}\\
   & \leqslant
   c(\bar\rho)
      \Theta\,
e^{-nD},
\end{align*}
with $\H(t)=-t\log t-(1-t)\log(1-t)$ for $t \in (0,1)$, $   \Theta^2:=e^{1/2}/(2\pi[r(1-r/p)]^{1/p})$ and $D=\mathds W(\bar\rho,t_0+\varepsilon)-\mathds{W}(\bar\rho,t_0)>0$. Indeed, note that Stirling formula (see Lemma~\ref{lem:AsymptoticGamma}) leads to
\[
 {p \choose r}\leq \frac{e^{1/4}}{\sqrt{2\pi}[r(1-r/p)]^{1/(2p)}}e^{-r\log(r/p)-(p-r)\log(1-r/p)}=\Theta e^{p\H(r/p)}\,.
\]
where $\Theta\to{e^{1/4}}/{\sqrt{2\pi}}$ when $0<r/p<1$ and $p$ goes to infinity. Set $D_2(\bar\rho,\delta,\varepsilon)=\frac{D}{2}$. Observe that $D-\log(\Theta)/n\geq D_2(\bar\rho,\delta,\varepsilon)$ for large enough $n$. Then, for~$n$ large enough (depending only on $\bar\rho$, $\delta$ and $\varepsilon$), it holds
\begin{align*}
 \P\Big\{\cmax \geqslant \sqrt{\bar\rho}(2+\sqrt{\bar\rho})+t_0+\varepsilon\Big\} 
& \leqslant c(\bar\rho)e^{-nD_2(\bar\rho,\delta,\varepsilon)}.
\end{align*}
Following the same arguments, we get a similar inequality \eqref{eq:bounds_RIC_eigen_min} for $\cmin$.

We suppose now that $(\M^{(n)})_{n\geq2}$ is a sequence of $(n\times p)$ matrices with $n/p\to\delta$ satisfying~\eqref{covariance_model} and that Condition~\eqref{eq:CondLemma} is satisfied, namely
\begin{equation*}
\delta>\Psi_0^{(1)}(\rho,\mathds W):=\frac{1}{2\rho}\exp\Big\{1-\frac{1}{2\rho}{\mathds W\Big[2\rho,2\tau_0(\sqrt{\rho}-\sqrt{\rho_0})\big(\sqrt\rho-\frac{1}{2\sqrt{\rho_0}}\big)\Big]}\Big\}\,.
\end{equation*}
Using the fact that $\H(t)\leq-t\log t+t$, this condition implies
\begin{equation}
\label{eq:t_*} 0< \frac{1}{\delta}\H(2\rho\delta)<\mathds{W}\Big[2\rho,2\tau_0(\sqrt{\rho}-\sqrt{\rho_0})\big(\sqrt{\rho}-\frac{1}{2\sqrt{\rho_0}}\big)\Big].
\end{equation}
Note that $2\tau_0(\sqrt{\rho}-\sqrt{\rho_0})(\sqrt{\rho}-\frac{1}{2\sqrt{\rho_0}})=\frac{8}{\sqrt{41}}\rho-2\sqrt{2}\sqrt{\rho}+\frac{4}{\sqrt{41}}$ belongs to $(0,\tau_0)$. Recall that $\mathds{W}(2\rho,.)$ is increasing on this interval. Applying $\mathds{W}^{-1}(2\rho,.)$ to \eqref{eq:t_*} leads to
\[ 0<t_0<\frac{8}{\sqrt{41}}\rho-2\sqrt{2}\sqrt{\rho}+\frac{4}{\sqrt{41}}.\]
Let $t_0<t<t_*=\frac{8}{\sqrt{41}}\rho-2\sqrt{2}\sqrt{\rho}+\frac{4}{\sqrt{41}}$. Remark that Condition~\eqref{eq:Condition_SRSR} is satisfied on the event $\{\cmax(2s,\M^{(n)}) < \sqrt{2\rho}(2+\sqrt{2\rho})+t\}\cap\{\cmin(2s,\M^{(n)}) < \sqrt{2\rho}(2-\sqrt{2\rho})+t\}$. Indeed, denoting $\gamma_0=\frac{(4+\sqrt{41})^2}{25}$,
\begin{align*}
\gamma(2s,\M)-\frac{(4+\sqrt{41})^2}{25}&=\frac{1+\cmax(2s,\M^{(n)})}{1-\cmin(2s,\M^{(n)})}-\gamma_0
\\&<\frac{(1+\sqrt{2\rho})^2+t}{(1-\sqrt{2\rho})^2-t}-\gamma_0
\\&<\frac{(1-\gamma_0)+2\sqrt{2}(1+\gamma_0)\sqrt{\rho}+2(1-\gamma_0)\rho+(1+\gamma_0)t}{(1-\sqrt{2\rho})^2-t}\,.
\end{align*}
Note that $t<t_*=\frac{8}{\sqrt{41}}\rho-2\sqrt{2}\sqrt{\rho}+\frac{4}{\sqrt{41}}$ implies that the denominator is positive.\\
Moreover $1-\gamma_0=-\frac{8}{25}(4+\sqrt{41})$ and $1+\gamma_0=\frac{2\sqrt{41}}{25}(4+\sqrt{41})>0$. Therefore
\begin{align*}
(1-\gamma_0)+2\sqrt{2}(1+\gamma_0)\sqrt{\rho}+2(1-\gamma_0)\rho+(1+\gamma_0)t &=(1+\gamma_0) (t-t_*) <0\,.
\end{align*}
As a consequence,
\[ \gamma(2s,\M)< \frac{(4+\sqrt{41})^2}{25}.\]

Invoking \eqref{eq:bounds_RIC_eigen_max} and \eqref{eq:bounds_RIC_eigen_min} with $\varepsilon=t-t_0$ and $\bar\rho=2\rho$ yields the following
\begin{align*}
  \P\Big(\cmax(2s,\M^{(n)}) \geqslant \sqrt{2\rho}(2+\sqrt{2\rho})+t\Big) & \leqslant c(2\rho)e^{-nD_2(2\rho,\delta,t-t_0)},\\
  \P\Big(\cmin(2s,\M^{(n)}) \geqslant \sqrt{2\rho}(2-\sqrt{2\rho})+t\Big) & \leqslant c(2\rho)e^{-nD_2(2\rho,\delta,t-t_0)},
\end{align*}
with $D_2(2\rho,\delta,t-t_0)=\big(\mathds{W}(2\rho,t)-\mathds{W}(2\rho,t_0)\big)/2>0$. Note that $t \mapsto D_2(2\rho,\delta,t-t_0)$ is continuous and increasing from $[t_0,t_*) \subset [0,\tau_0)$ onto $\big[0,D_2(2\rho,\delta,t_*-t_0)\big)$.\\
Choose $D_1(\rho,\delta) \in \big(0,D_2(2\rho,\delta,t_*-t_0)\big)$ (remark that $t_*$ and $t_0$ depend only on $\rho$ and $\delta$) and define $t_{\rho,\delta}$ in $(t_0,t_*)$ so that $D_2(2\rho,\delta,t_{\rho,\delta}-t_0)=D_1(\rho,\delta)$. Therefore
\begin{align*}
 & \P\Big\{{\M^{(n)}}\ \text{does not satisfy}\ \eqref{eq:Condition_SRSR}\ \text{with}\ s\leq\lfloor \rho n\rfloor\Big\}\\
 & \leqslant \P\Big(\cmax(2s,\M^{(n)}) \geqslant \sqrt{2\rho}(2+\sqrt{2\rho})+t_{\rho,\delta}\Big) + \P\Big(\cmin(2s,\M^{(n)}) \geqslant \sqrt{2\rho}(2-\sqrt{2\rho})+t_{\rho,\delta}\Big)\\
 & \leqslant 2c(2\rho)e^{-nD_1(\rho,\delta)},
\end{align*}
which concludes the proof.

 \subsection{Proof of Theorem~\ref{thm:key_singular}}
 \label{proof:key_singular}
 We follow the same lines as in the previous proof. We first prove~\eqref{eq:bound_RIC_singular_max}. For $\varepsilon>0$, we get the following.
\begin{align*}
\P\Big\{\cmax \geqslant (\sqrt{\bar\rho}+t_0)&(2+\sqrt{\bar\rho}+t_0) +\varepsilon\Big\}\\
& =\P\Big\{1+\cmax \geqslant(1+\sqrt{\bar\rho}+t_0)^2+\varepsilon\Big\}\\
&= \P\Big\{\exists x \in \Sigma_r \ \text{s.t.} \ \|\M x\|^2\geqslant \big((1+\sqrt{\bar\rho}+t_0)^2+\varepsilon\big)\|x\|^2\Big\}
 \\&\leqslant \sum_{\C_{r,n}} \P\Big\{\sigma_1 \geqslant \sqrt{(1+\sqrt{\bar\rho}+t_0)^2+\varepsilon}\Big\}\\
&\leqslant \sum_{\C_{r,n}} \P\Big\{\sigma_1 \geqslant 1+\sqrt{\bar\rho}+t_0+f(\bar\rho,\delta,\varepsilon)\Big\}\,,
\end{align*}
where $f(\bar\rho,\delta,\varepsilon)=\sqrt{(1+\sqrt{\bar\rho}+t_0)^2+\varepsilon}-( 1+\sqrt{\bar\rho}+t_0)>0$. Then
\begin{align*}
\P\Big(\cmax \geqslant (\sqrt{\bar\rho}+t_0)(2+\sqrt{\bar\rho}+t_0)+\varepsilon\Big)
 & 
 \leqslant {p \choose r}c(\bar\rho)e^{-n\mathds W\big(\bar\rho,t_0+f(\delta,\bar\rho,\varepsilon)\big)}\\
   & \leqslant
c(\bar\rho)
   \Theta\,
e^{-n\mathds W\big(\rho,t_0+f(\delta,\bar\rho,\varepsilon)\big)+p\H(r/p)}\\
   & \leqslant
   c(\bar\rho)
      \Theta\,
e^{-nD},
\end{align*}
with $\H(t)=-t\log t-(1-t)\log(1-t)$ for $t \in (0,1)$, $   \Theta^2:=e^{1/2}/(2\pi[r(1-r/p)]^{1/p})$ and $D=\mathds W\big(\bar\rho,t_0+f(\delta,\bar\rho,\varepsilon)\big)-\mathds{W}(\bar\rho,t_0)$. Set $D_2(\bar\rho,\delta,\varepsilon)=\frac{D}{2}$. Observe that $D-\log(\Theta)/n\geq D_2(\bar\rho,\delta,\varepsilon)$ for large enough $n$. Then, for~$n$ large enough (depending only on $\bar\rho$, $\delta$ and $\varepsilon$), it holds
\begin{align*}
 \P\Big\{\cmax \geqslant (\sqrt{\bar\rho}+t_0)(2+\sqrt{\bar\rho}+t_0)+\varepsilon\Big\} 
& \leqslant c(\bar\rho)e^{-nD_2(\bar\rho,\delta,\varepsilon)}.
\end{align*}
Following the same arguments, we get a similar inequality \eqref{eq:bound_RIC_singular_min} for $\cmin$.

We suppose now that $(\M^{(n)})_{n\geq2}$ is a sequence of $(n\times p)$ matrices with $n/p\to\delta$ satisfying~\eqref{singular_model} and that Condition~\eqref{eq:CondLemma2} is satisfied, namely
\begin{equation*}
\delta>\Psi_0^{(2)}(\rho,\mathds W):={\frac{1}{2\rho}}\exp\Big(1-\frac{1}{2\rho}{\mathds W\big[2\rho,\sqrt{2\rho_0}-\sqrt{2\rho}\big]}\Big)\,,
\end{equation*}
Using the fact that $\H(t)\leq-t\log t+t$, this condition implies
\[ 0< \frac{1}{\delta}\H(2\rho\delta)<\mathds{W}\Big(2\rho,\sqrt{2\rho_0}-\sqrt{2\rho}\Big).\]
Recall that $\mathds{W}(2\rho,.)$ is increasing on $[0,\sqrt{2\rho_0})$. Applying $\mathds{W}^{-1}(2\rho,.)$ leads to
\[ 0<t_0<\sqrt{2\rho_0}-\sqrt{2\rho}.\]
Let $t_0<t<t_*=\sqrt{2\rho_0}-\sqrt{2\rho}$. Remark that Condition~\eqref{eq:Condition_SRSR} is satisfied on the event $\{\cmax(2s,\M^{(n)}) < (\sqrt{2\rho}+t)(2+\sqrt{2\rho}+t)\}\cap\{\cmin(2s,\M^{(n)}) < (\sqrt{2\rho}+t)(2-\sqrt{2\rho}-t)\}$. Indeed
\begin{align*}
\sqrt{\gamma(2s,\M)}-\frac{4+\sqrt{41}}{5}&<\frac{1+\sqrt{2\rho}+t}{1-\sqrt{2\rho}-t}-\sqrt{\gamma_0}\\ 
& < \frac{1-\sqrt{\gamma_0}+\sqrt{2}(1+\sqrt{\gamma_0})\sqrt{\rho}+(1+\sqrt{\gamma_0})t}{1-\sqrt{2\rho}-t}\,.
\end{align*}
Note that $t<t_*=\sqrt{2\rho_0}-\sqrt{2\rho}$ implies that the denominator is positive. Moreover 
\begin{align*}
1-\sqrt{\gamma_0}+\sqrt{2}(1+\sqrt{\gamma_0})\sqrt{\rho}+(1+\sqrt{\gamma_0})t=(1+\sqrt{\gamma_0})(t-t_*)<0.
\end{align*}
As a consequence,
\[ \gamma(2s,\M)< \frac{(4+\sqrt{41})^2}{25}.\]

Similarly to \eqref{eq:bound_RIC_singular_max} and \eqref{eq:bound_RIC_singular_min}, it can be proved that for $t>t_0$,
\begin{align*}
  \P\Big(\cmax(2s,\M^{(n)}) \geqslant (\sqrt{2\rho}+t)(2+\sqrt{2\rho}+t)\Big) & \leqslant c(2\rho)e^{-nD_3(2\rho,\delta,t)},\\
  \P\Big(\cmin(2s,\M^{(n)}) \geqslant (\sqrt{2\rho}+t)(2-\sqrt{2\rho}-t)\Big) & \leqslant c(2\rho)e^{-nD_3(2\rho,\delta,t)},
\end{align*}
with $D_3(2\rho,\delta,t)>0$. Note that $t \mapsto D_3(2\rho,\delta,t)$ is continuous and increasing from $[t_0,t_*) \subset [0,\tau_0)$ onto $\big[D_3(2\rho,\delta,t_0),D_3(2\rho,\delta,t_*)\big)$. Choose $D_1(\rho,\delta) \in \big(D_2(2\rho,\delta,t_0);D_3(2\rho,\delta,t_*)\big)$ (note that $t_*$ and $t_0$ depend only on $\rho$ and $\delta$) and define $t_{\rho,\delta}$ so that $D_3(2\rho,\delta,t_{\rho,\delta})=D_1(\rho,\delta)$. Therefore
\begin{align*}
 \P\Big\{{\M^{(n)}}\ \text{does not satisfy}\ & \eqref{eq:Condition_SRSR}\ \text{with}\ s\leq\lfloor \rho n\rfloor\Big\}\\
 & \leqslant \P\Big(\cmax(2s,\M^{(n)}) \geqslant \big(\sqrt{2\rho}+t_{\rho,\delta}\big)\big(2+\sqrt{2\rho}+t_{\rho,\delta}\big)\Big)\\
 & \qquad + \P\Big(\cmin(2s,\M^{(n)}) \geqslant \big(\sqrt{2\rho}+t_{\rho,\delta}\big)\big(2-\sqrt{2\rho}-t_{\rho,\delta}\big)\Big)\\
 & \leqslant 2c(2\rho)e^{-nD_1(\rho,\delta)},
\end{align*}
which concludes the proof.


 \bibliographystyle{abbrv}
 \bibliography{biblio}

\newpage

\section{Supplement : Deviations for the Rademacher model} \label{sec:dev_Rad}
In this section we follow the steps of the work \cite{feldheim2010universality} to get small deviation inequalities on the extreme eigenvalues of Gram matrices built from the Rademacher law. The paper \cite{feldheim2010universality} focuses on the asymptotic distribution of the fluctuations of the extreme eigenvalues, and it proved that the extreme eigenvalues of the sample covariance matrices built from sub-Gaussian matrices asymptotically fluctuate around their limiting values (with proper scaling) with respect to the Tracy-Widom distribution. Their results follow from an interesting estimation of the moments of the fluctuations. While their estimation is interestingly of the right order (namely $\varepsilon^{3/2}$), the authors of \cite{feldheim2010universality} did not pursue on giving an upper bound of the constant appearing in their rate function, see Claim $(a)$ and $(b)$ of Point $2$ in \cite[Corollary V.2.1]{feldheim2010universality}. 

Unfortunately, the constant $C_{\mathrm{FS}}$ appearing in the rate function is of crucial importance when deriving phase transitions, see Section \ref{sec:main} for instance. Hence, we need to track the proof of \cite{feldheim2010universality} in order to provide an upper bound on $C_{\mathrm{FS}}$ and its dependence on the ratio $\rho$ of the sizes of the Rademacher matrix. This strenuous hunt necessitates to recast all the asymptotic bounds appearing in \cite{feldheim2010universality} into non asymptotic ones as sharp as possible. The benefit of this elementary but non trivial task is the following. It gives, for the first time, an explicit expression of small deviations of extreme eigenvalues of the sample covariance matrices at the sharp rate $\varepsilon^{3/2}$. This section is devoted to prove the following result.

\begin{prop}
\label{prop:FS}
Let $N>M\geq 54$ and consider 
\[
\C:=\X\X^\top\quad\mathrm{where}\ \X\in\{\pm1\}^{M\times N}\mathrm{\ with\ i.i.d.\ Rademacher\ entries}
\] 
 then 
 \begin{align*}
\Prob\Big\{\lambda_M(\B)  \geqslant (\sqrt{M}+\sqrt{N})^2+\varepsilon N\Big\}
&\leq
\frac{\mathds W_0(\rho,\varepsilon)}
{{1-\rho}}
M
\exp(-N\mathds W_{\mathrm{FS}}(\rho,\varepsilon))
\\
\Prob\Big\{\lambda_1(\B)  \leqslant (\sqrt{M}-\sqrt{N})^2-\varepsilon N\Big\}
&\leq
\frac{\mathds W_0(\rho,\varepsilon)}
{{1-\rho}}
M
\exp(-N\mathds W_{\mathrm{FS}}(\rho,\varepsilon))
 \end{align*}
where $\rho=M/N$ and 
\begin{align*}
\mathds W_0(\rho,\varepsilon)&:=
c_0\exp
\left[
{c_0\sqrt{\log\Big(1+\frac{\varepsilon}{2\sqrt{\rho}}\Big)}}
\right]
\\
\mathds W_{\mathrm{FS}}(\rho,\varepsilon)&:=
\frac{\rho\log(1+\frac{\varepsilon}{2\sqrt{\rho}})^{\frac32}}{C_{\mathrm{FS}}(1+\sqrt{\rho})^2}
\end{align*}
for some universal constants $c_0>0$ and $837>C_{\mathrm{FS}}>0$.
Furthermore, for any $C>3242$, there exists a constant $v:=v(\rho,C)>0$ that depends only on $\rho=M/N$ and $C$ such that, for all $0<\varepsilon<\sqrt{\rho}$,
 \begin{align*}
\Prob\Big\{\lambda_M(\B)  \geqslant (\sqrt{M}+\sqrt{N})^2+\varepsilon N\Big\}
&\leq
v
\exp\Big(-C^{-1}N\frac{\rho^{1/4}}{(1+\sqrt{\rho})^2}\varepsilon^{\frac32}\Big)
\\
\Prob\Big\{\lambda_1(\B)  \leqslant (\sqrt{M}-\sqrt{N})^2-\varepsilon N\Big\}
&\leq
v
\exp\Big(-C^{-1}N\frac{\rho^{1/4}}{(1+\sqrt{\rho})^2}\varepsilon^{\frac32}\Big)\,.
\end{align*}

\end{prop}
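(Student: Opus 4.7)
The plan is to revisit the combinatorial moment method of \cite{feldheim2010universality} step by step, replacing every asymptotic estimate by an explicit non-asymptotic inequality, so that the universal constants become visible and can be bounded numerically. For the largest eigenvalue, the starting point is Markov's inequality applied to a high moment:
\[
\Prob\big\{\lambda_M(\B) \geq a\big\} \leq \frac{\mathbb{E}[\mathrm{tr}(\B^k)]}{a^k},\qquad a=(\sqrt{M}+\sqrt{N})^2+\varepsilon N,
\]
with the integer $k$ left free for later optimization. The argument then splits into two pieces: a sharp upper bound on $\mathbb{E}[\mathrm{tr}(\B^k)]$ and a careful choice of $k$ in terms of $N$, $M$ and $\varepsilon$.

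The second step would expand $\mathbb{E}[\mathrm{tr}(\B^k)]$ combinatorially. Since $\B=\X\X^\top$ with $\X$ Rademacher, the trace moment becomes a sum over closed walks of length $2k$ on the complete bipartite graph $[M]\cup[N]$, and the contribution of each walk is $1$ if every edge is traversed an even number of times and $0$ otherwise. I would follow the tree-skeleton encoding of \cite{feldheim2010universality}, in which each admissible walk is described by a plane tree together with ``excess'' edge identifications; the contribution of every combinatorial class is bounded by a product involving Catalan-type numbers and explicit powers of $M$ and $N$. Done non-asymptotically, this should yield
\[
\mathbb{E}[\mathrm{tr}(\B^k)] \leq \frac{M}{1-\rho}\,(\sqrt{M}+\sqrt{N})^{2k}\, Q(k,M,N),
\]
where $Q$ is polynomial in $k$ and controlled via Stirling with explicit remainder; this is exactly where the hypothesis $M\geq 54$ enters, so that the binomial estimates fall inside their useful non-asymptotic range.

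The third step is the optimization of $k$. Combining $(\sqrt{M}+\sqrt{N})^{2k}/a^k \leq (1+\varepsilon/(1+\sqrt{\rho})^2)^{-k}$ with an elementary inequality of the shape $\log(1+u) \geq \tfrac{3}{2}\log(1+u/2)$ on the relevant range, and choosing
\[
k \,\simeq\, N\,\rho^{1/4}(1+\sqrt{\rho})^{-1}\sqrt{\log\!\big(1+\varepsilon/(2\sqrt{\rho})\big)},
\]
produces the advertised exponent $-N\mathds W_{\mathrm{FS}}(\rho,\varepsilon)$. The residual polynomial in $k$ together with the $M/(1-\rho)$ prefactor delivers the correction $\mathds W_0(\rho,\varepsilon)$. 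The cleaner second bound valid for $\varepsilon<\sqrt{\rho}$ follows by further bounding $\log(1+u)^{3/2}$ from below by $u^{3/2}$ up to an explicit numerical factor (via $\log(1+u)\geq u-u^2/2$ on $u\leq 1/2$) and absorbing the polynomial prefactor into a constant $v(\rho,C)$; the slack between the $837$ of the first claim and the $3242$ of the second is exactly what this last approximation costs.

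For the smallest eigenvalue the direct moment method fails, since $\lambda_1(\B)^{-k}$ has no tractable trace expansion, but the combinatorial argument survives through Hermitization: set $\mathcal{H}=\bigl(\begin{smallmatrix}0 & \X^\top \\ \X & 0\end{smallmatrix}\bigr)$, whose nonzero eigenvalues are $\pm\sigma_i(\X)$, and translate $\{\lambda_1(\B)\leq(\sqrt{M}-\sqrt{N})^2-\varepsilon N\}$ into a deviation of the smallest positive eigenvalue of $\mathcal{H}$ from $\sqrt{N}-\sqrt{M}$. The walk enumeration for high moments of a suitable polynomial in $\mathcal{H}$ is parallel to the one for $\lambda_M$, with the same exponent and numerical constants absorbable into $C_{\mathrm{FS}}$. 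The main obstacle throughout is the honest bookkeeping of every constant in the walk enumeration: \cite{feldheim2010universality} states most estimates asymptotically with hidden $(1+o(1))$ and $O(1)$ factors, and the bulk of the work is to replace each of them by an explicit numerical inequality (Stirling with explicit remainder, Catalan asymptotics with explicit ratio, $\log(1+x)$ expansions on explicit intervals) and then to accumulate the numerical losses without blowing up the final constant.
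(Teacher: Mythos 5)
There is a real gap in the approach, not just in the bookkeeping. You apply Markov's inequality directly to $\mathbb{E}[\mathrm{tr}(\B^k)]/a^k$ with $a=(\sqrt{M}+\sqrt{N})^2+\varepsilon N$, which gives a decay factor governed by $\log(1+\varepsilon/(1+\sqrt{\rho})^2)$. The paper instead recenters and rescales, setting $\tB=(\C-(M+N-2))/(2\sqrt{(M-1)(N-1)})$, so that the spectrum of $\tB$ sits essentially in $[-1,1]$ and the target threshold becomes $\varepsilon_{M,N}\approx 1+\varepsilon/(2\sqrt{\rho})$; Markov is then applied to the even-plus-odd polynomial $f(\tB)=\tB^{2m}+\tB^{2m-1}$, whose trace expands in the shifted Chebyshev basis into sums of $\mathbb{E}[\mathrm{Tr}\, Q_n(\C)]=\hat\Sigma_1^1(n)$, the number of \emph{non-backtracking} closed paths. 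The recentering is not a normalization choice: $\log(1+\varepsilon/(2\sqrt{\rho}))$ is strictly larger than $\log(1+\varepsilon/(1+\sqrt{\rho})^2)$, and for $\varepsilon\lesssim\sqrt{\rho}$ the resulting exponent is of order $\rho^{1/4}\varepsilon^{3/2}$ rather than the weaker $\rho\,\varepsilon^{3/2}$ your approach would produce. In the small-$\rho$ regime relevant for the rest of the paper, this is a loss by a factor $\rho^{3/4}$ in the exponent, so the stated form of $\mathds W_{\mathrm{FS}}$ would not be reachable with plain power moments. Related to this, your combinatorial step counts \emph{all} closed walks on the bipartite graph (the expansion of $\mathrm{tr}(\B^k)$), whereas the Feldheim--Sodin automaton you invoke, and the bound $\hat\Sigma_1^1(n)\lesssim (MN)^{n/2}\exp(C n^{3/2}/\sqrt{M})$ the paper proves, are specific to non-backtracking paths; the two enumerations are not interchangeable.

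For the smallest eigenvalue, you propose Hermitization, i.e.\ passing to the block matrix $\mathcal H$ whose nonzero spectrum is $\pm\sigma_i(\X)$. That is a legitimate general device, but it is not what the paper does, and it would require a separate combinatorial analysis of the Hermitized walks. The paper instead exploits the same recentered $\tB$ and applies Markov to $g(x)=x^{2m}-x^{2m-1}=f(-x)$, so that the bound $B_m=\mathbb{E}[\mathrm{Tr}\,\tB^{2m}]-\mathbb{E}[\mathrm{Tr}\,\tB^{2m-1}]$ is controlled by essentially the same sums $\sum_k s^{k}\,\mathbb{E}[\mathrm{Tr}\,V_{2(n-k)-1,s}(\tB)]$ and the proof of the large-eigenvalue case is reused verbatim. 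Your route would also have to re-derive how the exponent picks up $\log(1+\varepsilon/(2\sqrt{\rho}))$ for the left edge, which is again hidden behind the recentering. So the two conceptual ingredients you would need to add to your sketch are: (i) work with the affinely renormalized matrix $\tB$ and the parity-weighted trace $\mathbb{E}[\mathrm{Tr}(\tB^{2m}\pm\tB^{2m-1})]$, expanded in the shifted Chebyshev/orthogonal-polynomial basis, rather than with raw moments of $\B$; and (ii) enumerate non-backtracking paths via the Feldheim--Sodin automaton so that $\hat\Sigma_1^1(n)\sim (MN)^{n/2}$ up to the $\exp(C n^{3/2}/\sqrt{M})$ correction, since that is what ultimately yields the $m^3/M^2$ growth and, after optimization in $m$, the $\rho\,\log(1+\varepsilon/(2\sqrt{\rho}))^{3/2}$ rate.
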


\subsection{Sketch of the proof}
The result of \cite{feldheim2010universality} is based on a combinatorial proof. Interestingly, this approach is suited for the Rademacher model since, in this case, traces of polynomials of the covariance matrix $\C$ can be expressed as the number of non-backtracking paths of given length. In this section, we change notation and we use the notation of the paper \cite{feldheim2010universality} to ease readability when referring to this latter. Hence, we consider a Rademacher matrix of size $(M\times N)$ with $M<N$ (referred to as $(s\times n)$ with $s<n$ in the rest of this paper). We draw this proof into the following points.
\begin{enumerate}
\item 
The proof \cite{feldheim2010universality} is based on a moment method that captures the influence of the largest and the smallest eigenvalues considering a new centering
\[
\tB:=\frac{\C-(M+N-2)}{2\sqrt{(M-1)(N-1)}}\,.
\]
 The authors \cite{feldheim2010universality} then use the trace of $\tB^{2m}+\tB^{2m-1}$ (resp. $\tB^{2m}-\tB^{2m-1}$) to estimate the moments of the largest (resp. smallest) eigenvalue. 
\item The control of 
\[
A_m:=\Exp\Tr{\tB^{2m}}+\Exp\Tr{\tB^{2m-1}}
\]
(resp. $B_m:=\Exp\Tr{\tB^{2m}}-\Exp\Tr{\tB^{2m-1}}$) is given by a control of traces of polynomials $Q_n(\C)$ of $\C$. Up to a proper scaling, these polynomials are the orthogonal polynomials of the Marchenko-Pastur law which can be expressed by Chebyshev polynomials $U_n$ of the second kind.
\item 
In the Rademacher model, the aforementioned traces, namely $\Exp\Tr{Q_n(\C)}$, are exactly the number $\hat\Sigma_{1}^{1}(n)$ of non-backtracking paths on the complete bi-partite graph that cross an even number of times each edge and end at their starting vertex. This claim can be generalized to general random sub-Gaussian matrices, up to technicalities.
\item 
To estimate the number of non-backtracking paths $\hat\Sigma_{1}^{1}(n)$, the article \cite{feldheim2010universality} begins with a mapping from the collection of non-backtracking paths into the collection of weighted diagrams. Then it provides an automaton which constructs all possible diagrams. The number of diagrams constructed by the automaton ending in $s$ steps is denoted $D_1(s)$. Lemma~\ref{lem:D1} provides an upper bound on this quantity.  Summing over $s$, it yields an upper bound on $\hat\Sigma_{1}^{1}(n)$, see \eqref{eq:PathsBound} in Lemma \ref{lem:PathsBound}.
\item In the Rademacher model, $\hat\Sigma_{1}^{1}(n)$ is the expectation of the trace of $Q_{n}$. Hence, we deduce an upper bound on these traces.
\item Using Markov inequality and optimizing over the parameters, we deduce small deviation inequalities on the smallest and largest eigenvalues.
\end{enumerate}

\subsection{Number of diagrams}
Recall that $D_1(s)$ denotes the number of diagrams constructed by the automaton ending in $s$ steps. The description of the automaton can be found in \cite{feldheim2010universality} Section II.2 page 101.
\begin{lem}
\label{lem:D1}
It holds, for all $s\geq1$, 
\[
D_{1}(s)\leqslant C_{0,D}C_{D}^{s-1}s^{s-1/2}
\] 
where $C_{0,D}$ and $C_{D}$ can be chosen as $C_{0,D}=8.31$ and $C_{D}=53.8$.
\end{lem}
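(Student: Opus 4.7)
The plan is to follow the combinatorial construction of the automaton described in Section~II.2 of \cite{feldheim2010universality} and recast each of its asymptotic bounds into an explicit quantitative one. The automaton builds a diagram by a sequence of $s$ elementary moves, each drawn from a finite alphabet $\mathcal A$ of instruction types; some of these moves also require choosing an already-existing vertex (or edge) of the current partial diagram at which to attach or close. The first step is therefore to enumerate $\mathcal A$ exhaustively, fix a uniform upper bound $a:=|\mathcal A|$ on the number of instruction types available from any state, and separate out the ``reuse-an-existing-vertex'' moves which are the only ones whose branching grows with the current diagram size.

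Writing $v_i$ for the number of vertices after $i$ steps and using $v_i\leq i$, the crude bound
\[
D_1(s)\;\leq\;N_0\cdot a^{\,s-1}\prod_{i=1}^{s-1}i \;=\; N_0\,a^{s-1}(s-1)!\,,
\]
where $N_0$ counts initial automaton states, captures exactly the shape we want. Applying Stirling in the explicit form $n!\leq\sqrt{2\pi n}\,(n/e)^n\exp(1/(12n))$ (invoked elsewhere in the paper through Lemma~\ref{lem:AsymptoticGamma}) converts $(s-1)!$ into $\sqrt{2\pi s}\,(s/e)^{s-1}e^{1/12}$, collapsing to $s^{s-1/2}$ up to constants. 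Letting $C_D$ absorb $a/e$ (with a small margin for loss in the $s\mapsto s-1$ substitution) and $C_{0,D}$ absorb $N_0\sqrt{2\pi}e^{1/12}$ together with the base-case enumeration gives an inequality of the claimed form. The base cases $s=1,2$ must be verified by direct enumeration so that the constants $8.31$ and $53.8$ are genuinely valid for every $s\geq 1$ and not only asymptotically.

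The main obstacle will be numerical sharpness rather than conceptual novelty. The naive count $v_i\leq i$ is too loose when multiplied over all $s-1$ steps, because the automaton does not offer a full vertex-sized branching at every step: many states admit at most a handful of continuations. To push $C_D$ down to $53.8$ I expect to need a refined case analysis that identifies precisely which automaton states permit attachment to an arbitrary previously visited vertex and which only permit a bounded number of local continuations, then redistribute the $\prod i$ factor accordingly. The secondary difficulty is bookkeeping: the automaton of \cite{feldheim2010universality} carries colored edges and instructions that may be opened, held open, and closed, so the ``alphabet'' $\mathcal A$ depends mildly on which instructions are currently active. Absorbing this context-dependence into a single universal $a$ without double-counting is where most of the effort will go, and where the concrete numerical value of $C_D$ will ultimately be decided.
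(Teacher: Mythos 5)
The proposal is a plan, not a proof; you say so yourself, and the gap is where the plan stops. The overall shape $C_{0,D}\,C_D^{s-1}\,s^{s-1/2}$ is correctly anticipated, and Stirling's formula will of course appear, but the mechanism you write down does not match the automaton's actual combinatorics, and the proposed refinement is not the one that yields $C_D=53.8$.

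Concretely, the paper's proof does not bound $D_1(s)$ by ``$a$ instruction choices times a choice among up to $i$ existing vertices at each step.'' It classifies the $s$ steps into transitions of three types, writes $s=2g+h$ (with $g$ transitions of type~1, $h$ of type~3), and handles the three regimes $h=0$, $g=0$, and $g,h\neq0$ separately. In each regime the diagram count factors into (i) a binomial or Catalan-type count for the \emph{order} in which transition types occur, (ii) a second binomial $\binom{6g+2h-1}{2g+h-1}$ for the length-change parameters $m_i$, and (iii) a factor $(6g+3h-1)^{2g+h}$ for the choice of an edge at each step, where the bound on the number of edges is the \emph{final} edge count $3s-1$, not a product over growing sizes. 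These are different in kind from your $a^{s-1}(s-1)!$: the per-step branching in the paper is bounded by $3s-1$ uniformly (giving an $(3s)^{s}$-type factor), while the $s^{-1/2}$ correction and the exponential rate $e^{\beta}$ emerge from Stirling applied to the binomials, not from $(s-1)!$ directly. The value $53.8=e^{3.985}$ is extracted by writing the product in exponential form, isolating a function $\beta(t)$ of $t=h/s$, and maximizing it on $(0,1)$ --- an optimization over the ratio of transition types that your plan never contemplates.

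Your proposed refinement (``identify which automaton states permit attachment to an arbitrary previously visited vertex and redistribute the $\prod i$ factor'') is not the right axis: the paper's refinement is global, over the transition-type vector $(g,h)$, not local, over automaton states. Without the decomposition $s=2g+h$, the explicit binomial factors, and the maximization of $\beta$, there is no route to the claimed constants; the crude $a^{s-1}(s-1)!$ bound cannot be tuned to $53.8$ by base-case enumeration. You would need to actually reproduce the paper's three-case split and the $\alpha,\beta,\gamma,\delta$ asymptotics to close the argument.
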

\begin{proof}
We follow Proposition II.2.3 of \cite{feldheim2010universality} but we focus on the case (of sample covariance matrices) corresponding to $\beta=1$. In this case, there are three types of transitions from one state to the following one. Let $s=2g+h$ be the number of steps in the automaton at the end, where $h$ is the number of transition of type $3$ and $g$ the number of transition of type $1$.

\noindent
$\bullet$ If $h=0$ then the number of ways to order the transitions of the type $1$ and $2$ is exactly $\frac{(2g)!}{g!(g+1)!}$.  Informally, the state of the automaton can be seen as a ``thread'' made of straight pieces and loops. The total length of this thread changes at each step. These changes of length are encoded by non-negative integers $m_i$. For precise definition of these numbers, see \cite{feldheim2010universality} Section II.2 page 103. In the present case, the number of ways to choose the numbers $m_{i}$ is at most ${6g-1 \choose 4g}$. The number of diagrams corresponding to a fixed order of transitions and fixed $m_{i}$ is at most $(6g-1)^{2g}$ (indeed, the following state is then determined by choosing an edge and there are $6g-1$ edges in the diagram). As in \cite{feldheim2010universality}, we deduce that an upper on $D_{1}$ is 
\[
\frac{(2g)!}{g!(g+1)!}{6g-1 \choose 4g}(6g-1)^{2g}=\frac{2g(6g-1)!(6g-1)^{2g}}{g!(g+1)!(4g)!}\,.
\]
Using Lemma \ref{lem:AsymptoticGamma}, this number is upper bounded by
\[
\frac {e^{2+1/60}}{\pi}g\frac{(6g-1)^{8g-1/2}}{g^{g+1/2}(g+1)^{g+3/2}(4g)^{4g+1/2}}\, .
\]
Writing $\theta=\frac{(6g-1)^{8g-1/2}}{g^{g+1/2}(g+1)^{g+3/2}(4g)^{4g+1/2}}$ in exponential form, we get
\[ \theta=\exp\Big(2g\log g+g\big(8\log(6)-4\log(4)\big)-3\log g-\frac{1}{2}(3\log 2+\log 3)+\gamma(g)\Big),
\]
with
\[ \gamma(g)=\Big(8g-\frac{1}{2}\Big)\log\Big(1-\frac{1}{6g}\Big)-\Big(g+\frac{3}{2}\Big)\log\Big(1+\frac{1}{g}\Big).
\]
Note that $\gamma$ is non decreasing on $(1,\infty)$ and goes to $-\frac{7}{3}$ when $g \to \infty$. Therefore, the number of diagrams in this case is upper bounded by (recall that $s=2g$ here)
\[\frac{1}{\pi}e^{7/2\log(3)-3/2\log(2)+1/60-1/3}(40.5)^{s-1}s^{s-2} \leqslant 3.84(40.5)^{s-1}s^{s-2}.\]

\noindent
$\bullet$ If $g=0$ then there are only transitions of the third kind. The number of ways to choose the numbers $m_{i}$ is at most ${2h-1 \choose h{-1}}
$. The number of diagrams corresponding to a fixed order of transitions and fixed $m_{i}$ is at most $(3h-1)^{h}$ (indeed, recall that the number of edges of the diagram is $3h-1$). We deduce that an upper bound on $D_{1}$ is 
\[
\frac{(2h-1)!}{h!(h-1)!}(3h-1)^{h}\,.
\]
Note that this number is $2$ when $h=1$. For $h\geqslant 2$, using Lemma \ref{lem:AsymptoticGamma}, this number is upper bounded by
\[
\frac {e^{1/12}}{\sqrt{2\pi}}\frac{(2h-1)^{2h-1/2}(3h-1)^h}{h^{h+1/2}(h-1)^{h-1/2}}\, .
\]
Once again, we write $\theta=\frac{(2h-1)^{2h-1/2}(3h-1)^h}{h^{h+1/2}(h-1)^{h-1/2}}$ in exponential form. This yields
\[\theta=\exp\Big[\Big(h-\frac{1}{2}\Big)\log h+\big(2\log(2)+\log(3)\big)(h-1)+\frac{3}{2}\log2+\log(3)+\gamma(h) \Big]\, ,\]
with \[\gamma(h)=\Big(2h-\frac{1}{2}\Big)\log\Big(1-\frac{1}{2h}\Big)+h\log\Big(1-\frac{1}{3h}\Big)-\Big(h-\frac{1}{2}\Big)\log\Big(1-\frac{1}{h}\Big).\]
Note that $\gamma$ is non increasing on $(2,h^*)$ and non decreasing on $(h^*,\infty)$ for some $h^*>2$. Therefore, $\gamma(h)$ is bounded by $\max(\gamma(2),\lim_{h \to \infty}\gamma(h))$. This yields $\gamma(h) \leqslant -0.33$ for all $h \geqslant 2$. Finally, the number of diagrams in this case is upper bounded by (recall that $s=h$ here)
\[\frac{e^{1/12}}{\sqrt{2\pi}}e^{3/2\log(2)+\log(3)-0.33}(12)^{s-1}s^{s-1/2} \leqslant 2.65(12)^{s-1}s^{s-1/2}.\]

\noindent
$\bullet$ If $h\neq 0$ and $g\neq0$ then the number of ways to order the transitions of the three types is exactly 
\[
{2g+h \choose h}\frac{(2g)!}{g!(g+1)!}=\frac{(2g+h)!}{h!g!(g+1)!}\,.
\]
The number of ways to choose the numbers $m_{i}$ is at most ${6g+2h-1 \choose 2g+h{-1}}$. The number of diagrams corresponding to a fixed order of transitions and fixed $m_{i}$ is at most $(6g+3h-1)^{2g+h}$ (indeed, recall that the number of edges of the diagram is $6g+3h-1$). 

We deduce that an upper bound on $D_{1}$ is 
\[\frac{(2g+h)!}{h!g!(g+1)!}{6g+2h-1 \choose 2g+h{-1}}(6g+3h-1)^{2g+h}.\]
Using the fact that $s=2g+h$ and Lemma \ref{lem:AsymptoticGamma}, this number is bounded by
\[
\frac{e^{131/126}}{(2\pi)^{3/2}}\frac{s^{s+1/2}(3s-h-1)^{3s-h-1/2}(3s-1)^s}{h^{h+1/2}g^{g+1/2}(g+1)^{g+3/2}(s-1)^{s-1/2}(2s-h)^{2s-h+1/2}}.
\]
Let $t=h/s\in[1/s,1-2/s]$ so that an upper bound is
\[
\frac{e^{131/126}}{(2\pi)^{3/2}}\frac{s^{s+1/2}(3s-ts-1)^{3s-ts-1/2}(3s-1)^s}{(ts)^{ts+1/2}(s\frac{1-t}{2})^{s(1-t)/2+1/2}(s\frac{1-t}{2}+1)^{s(1-t)/2+3/2}(s-1)^{s-1/2}(2s-ts)^{2s-ts+1/2}}.
\]
Once again, we write this in exponential form and get
\[
\frac{e^{131/126}}{(2\pi)^{3/2}}\exp\Big(s\log s-\frac{5}{2}\log s+\beta(t)s+\alpha(t)+\gamma(s,t)\Big),
\]
with
\begin{align*}
\alpha(t)&=2\log 2 -\frac{1}{2}\log(3-t)-\frac{1}{2}\log t-2\log(1-t)-\frac{1}{2}\log(2-t),\\
\beta(t)&=(3-t)\log(3-t)+\log(3)\\
&\quad-t\log t-(1-t)\log(1-t)+\log(2)(1-t)-(2-t)\log(2-t),\\
\gamma(s,t)&=\Big((3-t)s-\frac{1}{2}\Big)\log\Big(1-\frac{1}{(3-t)s}\Big)+s\log\Big(1-\frac{1}{3s}\Big)\\
&\quad-\frac{1}{2}\Big(s(1-t)+3\Big)\log\Big(1+\frac{2}{s(1-t)}\Big)-\Big(s-\frac{1}{2}\Big)\log\Big(1-\frac{1}{s}\Big).
\end{align*}

$\circ$
We focus first on $\beta$. This function is non decreasing on $(0,t^*)$ and non increasing on $(t^*,1)$, with $t^*=\frac{3}{2}-\frac{\sqrt{57}}{6}\approx 0.24$. Therefore, it reaches its maximum at $t^*$. Computing it yields $\beta(t) \leqslant 3.985$ for all $t \in (0,1)$.

$\circ$
We focus now on $\alpha$. This function is non increasing on $(0,t')$ and non decreasing on $(t',1)$ with $t' \in (0,1)$. Recall that $t \in (1/s,1-2/s)$. Therefore, $\alpha(t) \leqslant \max(\alpha(1/s),\alpha(1-2/s))$. Computing these two values and using the fact that $s \geqslant 3$ leads to $\alpha(t) \leqslant \alpha(1-2/s)$ for all $t \in (1/s,1-2/s)$. Consequently
\[\alpha(t) \leqslant 2\log s -\frac{1}{2}\log\Big(2+\frac{2}{s}\Big)-\frac{1}{2}\log\Big(1-\frac{2}{s}\Big)-\frac{1}{2}\log\Big(1+\frac{2}{s}\Big).\]

$\circ$
Let's turn to $\gamma$. Recall that $t \in (1/s,1-2/s)$. Dealing separately with the two terms $\big((3-t)s-\frac{1}{2}\big)\log\big(1-\frac{1}{(3-t)s}\big)$ and $\frac{1}{2}(s(1-t)+3)\log(1+\frac{2}{s(1-t)})$ yields
\begin{align*}
 \gamma & \leqslant \Big(3s-\frac32\Big)\log\Big(1-\frac{1}{3s-1}\Big)+s\log\Big(1-\frac{1}{3s}\Big)\\
 & \quad -\frac{1}{2}(s+2)\log\Big(1+\frac{2}{s-1}\Big)-\Big(s-\frac{1}{2}\Big)\log\Big(1-\frac{1}{s}\Big).
\end{align*}

\noindent
Going back to the number of diagrams in this case, it is bounded by
\[
\frac{e^{131/126}}{(2\pi)^{3/2}}\exp\Big(s\log s-\frac{1}{2}\log s+3.985s+\delta(s)\Big),
\]
with
\begin{align*}
\delta(s)  =&-\frac{1}{2}\log\Big(2+\frac{2}{s}\Big)-\frac{1}{2}\log\Big(1-\frac{2}{s}\Big)-\frac{1}{2}\log\Big(1+\frac{2}{s}\Big)\\
 &+\Big(3s-\frac32\Big)\log\Big(1-\frac{1}{3s-1}\Big)+s\log\Big(1-\frac{1}{3s}\Big)\\
 &-\frac{1}{2}(s+2)\log\Big(1+\frac{2}{s-1}\Big)-\Big(s-\frac{1}{2}\Big)\log\Big(1-\frac{1}{s}\Big).
 \end{align*}
This function is non decreasing on $(3,\infty)$ and goes to $-\frac{4}{3}-\frac{\log2}{2}\leqslant -1.67$ when $s$ goes to $\infty$. Therefore, there are at most
\[
\frac{e^{131/126-1.67}}{(2\pi)^{3/2}}(e^{3.985})^{s-1}s^{s-1/2} \leqslant 1.82 (53.8)^{s-1}s^{s-1/2}
\]
diagrams in this case.
\noindent
This leads to the result.
\end{proof}

\subsection{Number of paths}
Let $n\geq1$ be fixed. Recall that $\Exp\Tr{Q_n(\B)}$ is equal to the number $\hat\Sigma_{1}^{1}(n)$ of non-backtracking paths, see page 115 in \cite{feldheim2010universality}. Recall that $M\leqslant N$ denotes the sizes of the {Rademacher} matrix.
\begin{lem}
\label{lem:PathsBound}
It holds
\eq
\label{eq:PathsBound}
\hat\Sigma_{1}^{1}(n)\leqslant C_{0,\hat\Sigma}n(MN)^{n/2}\exp\Big[\frac{C_{\hat\Sigma}(1+\sqrt{M/N})n^{3/2}}{\sqrt {M}}\Big]
\qe
where $C_{0,\hat\Sigma}=160.4$  and $C_{\hat\Sigma}=13.3$. As a consequence,
\[\Exp[\Tr Q_n(\B)] \leqslant C_{0,\hat\Sigma}(MN)^{n/2}n\exp\Big(C_{\hat\Sigma}(1+\sqrt{M/N})\frac{n^{3/2}}{M^{1/2}}\Big).\]
\end{lem}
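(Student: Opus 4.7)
The goal is to upgrade Lemma~\ref{lem:D1}, which enumerates the \emph{abstract} automaton outputs by size $s$, into an enumeration of actual non-backtracking paths on the complete bipartite graph $K_{M,N}$. Any length-$n$ walk that crosses each edge an even number of times and returns to its starting vertex decomposes into (i) a size-$s$ diagram produced by the automaton, and (ii) a ``lifting datum'' consisting of the vertex labels in $[M]\cup[N]$ attached to the diagram together with the placement of the $s$ automaton steps among the $n$ walk positions. Summing $D_1(s)\cdot(\text{\# liftings of a size-}s\text{ diagram})$ over $s$ then yields an upper bound on $\hat\Sigma_1^1(n)$. The second assertion of the lemma follows at once from the first, because in the Rademacher setting $\Exp[\Tr Q_n(\B)]=\hat\Sigma_1^1(n)$ (recalled immediately before the statement).

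\textbf{Step 1: lifting an abstract diagram.} For each of the three automaton transition types used in the proof of Lemma~\ref{lem:D1}, I would track precisely how the step affects the label budget. A type-$2$ step opens a fresh ``straight'' edge and introduces one $M$-label and one $N$-label (contribution $\sqrt{MN}$), a type-$1$ step closes a loop without adding a label, and a type-$3$ step reuses existing labels, again without adding any. This gives a bound of the form $M^{a_s}N^{b_s}$ on the number of labelings, isolating a main term $(MN)^{n/2}$ coming from the pure tree skeleton and extracting a per-step slack of order $M^{-1/2}+N^{-1/2}=(1+\sqrt\rho)/\sqrt M$ for each of the $s-1$ non-tree transitions. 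The placement of the $s$ automaton steps inside the length-$n$ walk contributes an extra binomial factor $\binom{n-1}{s-1}$, bounded via the effective Stirling inequality of Lemma~\ref{lem:AsymptoticGamma}.

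\textbf{Step 2: assembling and summing.} Combining Step~1 with Lemma~\ref{lem:D1} yields an estimate of the shape
\[
\hat\Sigma_1^1(n)\;\leq\;(MN)^{n/2}\,C_{0,D}\sum_{s\geq 1}C_D^{\,s-1}\,s^{\,s-1/2}\,\binom{n-1}{s-1}\left(\frac{C'(1+\sqrt\rho)}{\sqrt M}\right)^{s-1}
\]
for an explicit absolute constant $C'>0$. Treating this as a one-variable sum, I would apply a saddle-point/Laplace argument: the summand is roughly $s^{-1/2}(\alpha_n)^{s-1}(ne/s)^{s-1}$ with $\alpha_n\asymp n(1+\sqrt\rho)/\sqrt M$, and the balance $s\log s\,\sim\,s\log(ne\alpha_n)$ selects a saddle $s^\star$ of order $n^{1/2}\sqrt M/(1+\sqrt\rho)^{1/2}$. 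Evaluating at $s^\star$ (and dominating a window of $O(n)$ terms by the saddle value) produces the exponential factor $\exp\bigl(C_{\hat\Sigma}(1+\sqrt{M/N})\,n^{3/2}/\sqrt M\bigr)$, while the polynomial prefactor $n$ in the statement absorbs both the number of effective summands and the $s^{-1/2}$ inherited from $D_1(s)$.

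\textbf{Main obstacle.} The analytically most delicate point is Step~1: one must pin down exactly how the bipartite label count degrades with each additional automaton step, and show that the per-step slack really is $M^{-1/2}+N^{-1/2}$ rather than, say, $M^{-1/2}$ alone or $(MN)^{-1/2}$. A lazy bookkeeping would either break the bipartite asymmetry or collapse the exponent from $n^{3/2}/\sqrt M$ to something weaker like $n^2/M$, and the whole extraction of small-deviation rates in Proposition~\ref{prop:FS} would lose its correct Tracy--Widom exponent $\tfrac32$. Beyond this structural point, the passage to the explicit constants $C_{0,\hat\Sigma}=160.4$ and $C_{\hat\Sigma}=13.3$ is purely quantitative but unforgiving: every Stirling-type bound (in particular every application of Lemma~\ref{lem:AsymptoticGamma} and every $\binom{n-1}{s-1}\leq (ne/s)^{s-1}$ bound) has to be sharp enough, and the constants $C_{0,D}=8.31$, $C_D=53.8$ from Lemma~\ref{lem:D1} have to be propagated carefully through the saddle-point optimization.
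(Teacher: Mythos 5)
There is a genuine gap in Step 1, and it propagates fatally to Step 2. Your lifting count does not match the actual combinatorial structure of the Feldheim--Sodin path enumeration in two ways. First, the vertex-labeling slack per automaton step is $(M^{-1/2}+N^{-1/2})^{2}$, not $(M^{-1/2}+N^{-1/2})$: the correct count (quoted from \cite[page 117]{feldheim2010universality}) is
\[
\tfrac12(MN)^{n/2}\Big[(1+\sqrt{M/N})(M^{-1/2}+N^{-1/2})^{2s-2}+(1-\sqrt{M/N})(M^{-1/2}-N^{-1/2})^{2s-2}\Big],
\]
with exponent $2s-2$, not $s-1$. Second, the placement of a size-$s$ diagram inside a length-$n$ path is not a single binomial $\binom{n-1}{s-1}$; one must count the \emph{weights} of the diagram, of which there are $3s-2$, each bounded by roughly $n/2$, giving a factor of order $n^{3s-2}/(3s-2)!$ rather than $n^{s-1}/(s-1)!$. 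With your factors the summand is, after cancelling $s^{s-1}$ from Lemma~\ref{lem:D1} against $\binom{n-1}{s-1}\le (ne/(s-1))^{s-1}$, essentially geometric in $s$ with ratio of order $n(1+\sqrt{M/N})/\sqrt M$; for $n\gtrsim\sqrt M$ the sum is dominated by $s=n$ and yields $\exp\big(cn\log(n/\sqrt M)\big)$, which is strictly weaker than $\exp\big(C n^{3/2}/\sqrt M\big)$ and would destroy the $\varepsilon^{3/2}$ rate in Proposition~\ref{prop:FS} after the optimization over $m$.

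With the correct factors the mechanism is different from the saddle-point picture you describe: per step one gains $n^{3}/s^{3}$ from the weights and $s$ from $D_1(s)$ while losing $M$ from the labels, so the sum collapses to
\[
\sum_{s\ge1}\frac{1}{(2(s-1))!}\Big[\frac{1.81\sqrt{C_D}\,(1\pm\sqrt{M/N})\,n^{3/2}}{\sqrt M}\Big]^{2(s-1)}\le \exp\Big[\frac{1.81\sqrt{C_D}\,(1+\sqrt{M/N})\,n^{3/2}}{\sqrt M}\Big],
\]
i.e.\ it is literally (bounded by) the exponential series --- no Laplace analysis is needed, and the terms peak at $s\asymp n^{3/2}/\sqrt M$, not at your claimed $s^{\star}\asymp n^{1/2}\sqrt M/(1+\sqrt{M/N})^{1/2}$ (which can even exceed $n$). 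This is also where the constants come from: $C_{\hat\Sigma}=1.81\sqrt{C_D}\simeq13.3$ and $C_{0,\hat\Sigma}=19.3\,C_{0,D}\simeq160.4$, the factor $19.3$ arising from a careful pointwise bound on the $s$-th term via Lemma~\ref{lem:AsymptoticGamma} and a tangent-line estimate. Your reduction of the second assertion to the identity $\Exp[\Tr Q_n(\B)]=\hat\Sigma_{1}^{1}(n)$ is correct.
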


\begin{proof}
The number of diagrams is $D_{1}(s)$ for $1\leqslant s\leqslant n$. The number of ways to choose the vertices on a diagram constructed in $s$ steps by the automaton is at most
\[
\frac12(MN)^{n/2}\Big[(1+\sqrt{M/N})(M^{-1/2}+N^{-1/2})^{2s-2}+(1-\sqrt{M/N})(M^{-1/2}-N^{-1/2})^{2s-2}\Big]\,,
\]
see \cite[page 117]{feldheim2010universality}. The number of ways to choose the weights on a diagram constructed in $s$ steps by the automaton is at most \cite{feldheim2010universality}
\[
\frac{(3s+1)}{(3s-2)!}\Big(\frac{n-3s+1}2+3s-2\Big)^{3s-2}\,.
\]
\noindent
We deduce that the number $\hat\Sigma_{1}^{1}(n)$ of non-backtracking paths is at most
\[
\hat\Sigma_{1}^{1}(n)\leq\frac12(MN)^{n/2}\bigg[\Big(1+\sqrt{\frac MN}\Big)T_{1}+\Big(1-\sqrt{\frac MN}\Big)T_{2}\bigg]
\]
where
\begin{align*}
T_{1}&:=\sum_{s=1}^{n}D_{1}(s)(M^{-1/2}+N^{-1/2})^{2s-2}\frac{(3s+1)}{(3s-2)!}\Big(\frac{n-3s+1}2+3s-2\Big)^{3s-2}
\\
T_{2}&:=\sum_{s=1}^{n}D_{1}(s)(M^{-1/2}-N^{-1/2})^{2s-2}\frac{(3s+1)}{(3s-2)!}\Big(\frac{n-3s+1}2+3s-2\Big)^{3s-2}
\end{align*}
We can bound each term. It reads as follows.
\begin{align*}
T_{1}&\leqslant C_{0,D}\sum_{s=1}^{n}C_{D}^{s-1}s^{s-1/2}(M^{-1/2}+N^{-1/2})^{2s-2}\frac{(3s+1)}{(3s-2)!}\Big(\frac{n-3s+1}2+3s-2\Big)^{3s-2}
\\
&\leqslant  C_{0,D}\sum_{s=1}^{n}C_{D}^{s-1}\Big[\frac{1+\sqrt{M/N}}{\sqrt M}\Big]^{2(s-1)}\frac{(3s+1)(n+3s-3)^{3s-2}s^{s-1/2}}{(3s-2)!2^{3s-2}}
\end{align*}
using Lemma \ref{lem:D1}. Invoke Lemma \ref{lem:AsymptoticGamma} to get that
\begin{align*}
\frac{(2(s-1))!}{n^{3(s-1)}} & \frac{(3s+1)({n+3s-3)^{3s-2}}s^{s-1/2}}{(3s-2)!2^{3s-2}}\\
&\leqslant n\frac{e^{s+1/12}}{2^{3s-2}}(3s+1)\sqrt{\frac{2s-2}{3s-2}}\Big(1+\frac{3s-3}n\Big)^{3s-2}\frac{(2s-2)^{2s-2}s^{s-1/2}}{(3s-2)^{3s-2}}
\\
&\leqslant ne^{1/12}\sqrt{\frac{2s-2}{3s-2}}2^{3s-2}(3s+1)\frac{(2s-2)^{2s-2}s^{s-1/2}}{(3s-2)^{3s-2}}e^s.
\end{align*}
But $2^{3s-2}(3s+1)\frac{(2s-2)^{2s-2}s^{s-1/2}}{(3s-2)^{3s-2}}e^s\leqslant \exp(s+f(s))$ where \[f(s)=(3s-2)\log(2)+(2s-2)\log(2s-2)+\log(3s+1)+\Big(s-\frac{1}{2}\Big)\log(s)-(3s-2)\log(3s-2).\]
Some elementary computations give the following:
\begin{align*}
 f(s)& =\frac{1}{2}\log s +(5\log 2-3\log 3)s+3\log 3-4\log 2+(2s-2)\log\Big(1-\frac{1}{s}\Big)\\
 & \qquad -(3s-2)\log\Big(1-\frac{2}{3s}\Big)+\log\Big(1+\frac{1}{3s}\Big)\\
 & = (5\log 2-3\log 3)s+3\log 3-4\log 2+g(s),
\end{align*}
with $g(s)=\frac{1}{2}\log s+(2s-2)\log\big(1-\frac{1}{s}\big)-(3s-2)\log\big(1-\frac{2}{3s}\big)+\log\big(1+\frac{1}{3s}\big)$.
We have
\begin{align*}
g'(s)&=\frac{3s-1}{2s(3s+1)}+2\log\Big(1-\frac{1}{s}\Big)-3\log\Big(1-\frac{2}{3s}\Big),\\
g''(s)&=\frac{-27s^4+99s^3-21s^2+11s+2}{2s^2(s-1)(3s-2)(3s+1)^2}.
\end{align*}
It may be shown that there exists $s_*>2$ such that $g''$ is positive on $(1,s_*)$ and negative on $(s_*,\infty)$. Therefore, $g$ is strictly concave on $(s_*,\infty)$ and its curve is below its tangents, which write $y=g'(s_0)(s-s_0)+f(s_0)$. For $s \in [1,s_*]$, $g(s) \leqslant g(1)=2\log 2$. As a consequence, we are looking for the point $s_0 \in (s_*,\infty)$ such that the tangent at $s_0$ goes through the point $(1,2\log 2)$. This tangent goes through the point $(1,g(s_0)+(1-s_0)g'(s_0))$. Set $h(s)=g(s)+(1-s)g'(s)$. This function is non decreasing and there is a unique point $s_0 \in (s_*,\infty)$ such that $h(s_0)=2\log 2$. It may be shown that $s_0 \in (39.66;39.67)$. As $g'$ is non increasing on this interval, $g'(s_0) \leqslant g'(39.66)\leqslant 0.013$. This leads to
\[ g(s) \leqslant 0.013(s-1)+2\log 2.\]
Then
\begin{align*}
\frac{(2(s-1))!}{n^{3(s-1)}} & \frac{(3s+1)({n+3s-3)^{3s-2}}s^{s-1/2}}{(3s-2)!2^{3s-2}}\\
& \leqslant ne^{1/12}\sqrt{\frac{2}{3}}\exp\Big((5\log 2-3\log 3+1.013)s+3\log 3-2\log 2-0.013\Big)\\
&\leqslant ne^{1/12}\sqrt{\frac{2}{3}}\exp(1+3\log 2)\exp\Big((5\log 2-3\log 3+1.013)(s-1)\Big)\\
& \leqslant 19.3 \ n \ (3.27)^{s-1}.
\end{align*}
As a consequence,
\begin{align*}
 T_1 & \leqslant 19.3 C_{0,D}n\sum_{s=1}^{n}\frac{1}{(2(s-1))!}\Big[\frac{1.81(1+\sqrt{M/N})n^{3/2}}{\sqrt {C_{D}^{-1}M}}\Big]^{2(s-1)}\\
 & \leqslant 19.3 C_{0,D}n\exp\Big(1.81\sqrt {C_{D}}(1+\sqrt{M/N})\frac{n^{3/2}}{M^{1/2}}\Big).
\end{align*}
Similarly, one gets
\begin{align*}
T_{2}&\leqslant 19.3n C_{0,D}\sum_{s=1}^{n}\frac{1}{(2(s-1))!}\Big[\frac{1.81(1-\sqrt{M/N})n^{3/2}}{\sqrt {C_{D}^{-1}M}}\Big]^{2(s-1)}\\
&\leqslant 19.3n C_{0,D}\exp\Big[\frac{1.81(1-\sqrt{M/N})\sqrt{C_{D}}n^{3/2}}{\sqrt {M}}\Big]\,.
\end{align*}
This yields the result.
\end{proof}

\subsection{Bound on the traces}\label{sec:trace}
\begin{lem}
\label{lem:traces}
It holds that
\eq
\label{eq:TraceBound}
\Big(\Exp[\Tr{\tB^{2m}}]+\Exp[\Tr{\tB^{2m-1}}]\Big)\vee \Big(\Exp[\Tr{\tB^{2m}}]-\Exp[\Tr{\tB^{2m-1}}]\Big)\leq\Delta_m
\qe
where 
\begin{align*}
\Delta_m&=
\frac{C_{0,\mathrm{Rad}}}{1-\frac{M}{N}}
m
 \Big[{\Big(\frac{MN}{(M-1)(N-1)}\Big)^m}+\frac Mm\Big]
\exp\Big( C_{\mathrm{Rad}}(1+\sqrt{M/N})^4\frac{m^3}{M^2}\Big),
\end{align*}
and
\begin{align*}
C_{0,\mathrm{Rad}}&=594C_{0,\hat\Sigma}=95,278\\
C_{\mathrm{Rad}}&=355.7C_{D}^2=830,415.
\end{align*}
\end{lem}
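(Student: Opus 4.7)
The plan is to reduce the moment bound $\Delta_m$ on $A_m=\Exp\Tr{\tB^{2m}}+\Exp\Tr{\tB^{2m-1}}$ and $B_m=\Exp\Tr{\tB^{2m}}-\Exp\Tr{\tB^{2m-1}}$ to the path count bound of Lemma~\ref{lem:PathsBound}, by expanding the polynomials $x^{2m-1}(1\pm x)$ in the Chebyshev basis of the second kind $\{U_k\}$ and then performing a Laplace-type analysis of the resulting sum. The first three steps are essentially algebraic, while the fourth step is where the exponent $m^{3/2}/\sqrt M$ of Lemma~\ref{lem:PathsBound} gets upgraded to the sharper $m^3/M^2$ appearing in $\Delta_m$.

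\textbf{Step 1 (Chebyshev expansion).} Starting from the classical identity $x^n=2^{-n}\sum_{j=0}^{\lfloor n/2\rfloor}\binom{n}{j}\tilde T_{n-2j}(x)$ and the link $T_k=(U_k-U_{k-2})/2$, one writes
\[
x^{2m-1}(1\pm x)=\sum_{k=0}^{2m}\alpha^{\pm}_k(m)\,U_k(x),
\]
where the coefficients $\alpha^{\pm}_k(m)$ are explicit linear combinations of binomial coefficients of the form $2^{-2m}\binom{2m}{m-\lfloor k/2\rfloor}$. By Stirling, this envelope is Gaussian in $k$: $\log|\alpha^{\pm}_k(m)|\asymp-\tfrac{k^2}{2m}-\tfrac12\log m$ for $k\lesssim m$.

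\textbf{Step 2 (passage from $\tB$ to $\B$).} Applying the identity at $x=\tB$, taking trace and expectation, and using the homogeneity $U_k(\tB)=[2\sqrt{(M-1)(N-1)}]^{-k}\tilde Q_k(\B)$ (where $\tilde Q_k$ is of degree $k$ in $\B$ and coincides, up to the Chebyshev normalization, with the Marchenko–Pastur orthogonal polynomial $Q_k$ introduced on page~115 of~\cite{feldheim2010universality}), one obtains, separating the trivial $k=0$ term $\Exp\Tr U_0(\tB)=M$,
\[
A_m,\,B_m \;=\; M\,\alpha^{\pm}_0(m)\;+\;\sum_{k=1}^{2m}\alpha^{\pm}_k(m)\,[2\sqrt{(M-1)(N-1)}]^{-k}\,\Exp\Tr Q_k(\B).
\]
Substituting Lemma~\ref{lem:PathsBound} and factoring the rescaling gives
\[
[2\sqrt{(M-1)(N-1)}]^{-k}\,\Exp\Tr Q_k(\B)\leq C_{0,\hat\Sigma}\Big(\tfrac{MN}{4(M-1)(N-1)}\Big)^{k/2}k\exp\Big(\tfrac{C_{\hat\Sigma}(1+\sqrt{M/N})k^{3/2}}{\sqrt{M}}\Big),
\]
while the factor $(1-M/N)^{-1}$ in $\Delta_m$ will come from summing the geometric tails contributed by the $(1\pm\sqrt{M/N})\,T_{1,2}$ decomposition that is internal to the proof of Lemma~\ref{lem:PathsBound}.

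\textbf{Step 3 (saddle point).} Combining Steps 1–2, the sum over $k$ is controlled by maximizing
\[
\Phi(k):=-\tfrac{k^2}{2m}+\tfrac{C_{\hat\Sigma}(1+\sqrt{M/N})k^{3/2}}{\sqrt{M}},
\]
whose stationary point is $k^{\star}\asymp(C_{\hat\Sigma}(1+\sqrt{M/N}))^{2}\,m^{2}/M$ and whose maximum value is of order $(1+\sqrt{M/N})^{4}m^{3}/M^{2}$. This is precisely the exponential appearing in $\Delta_m$, with $C_{\mathrm{Rad}}$ absorbing the numerical constants (the square of $C_D$ in $C_{\mathrm{Rad}}=355.7\,C_D^2$ reflecting the exponent of $C_{\hat\Sigma}$). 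The boundary $k=0$ yields the $M/m$ additive contribution through $\alpha^{\pm}_0(m)\asymp 1/(m2^{2m})$, while the boundary $k=2m$ produces the multiplicative $(MN/((M-1)(N-1)))^m$ factor. The prefactor $m$ is inherited from the factor $k\leq 2m$ in Lemma~\ref{lem:PathsBound} together with the width $\sqrt m$ of the Gaussian envelope.

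\textbf{Main obstacle.} The algebraic Steps 1–2 are routine; the work is in Step 3. First, one must verify that for both sign choices the Chebyshev coefficients admit the Gaussian envelope without sign-induced cancellation being needed (so the bound can be taken termwise in absolute value). Second, and more delicate, one must track the numerical constants through the saddle-point computation and the geometric summation, including the rescaling factor $MN/(4(M-1)(N-1))$ and the $(1+\sqrt{M/N})$ weights from the $T_1,T_2$ decomposition, finely enough to produce the stated values $C_{0,\mathrm{Rad}}=594\,C_{0,\hat\Sigma}$ and $C_{\mathrm{Rad}}=355.7\,C_{D}^{2}$. This is where the proof inherits the character of \cite{feldheim2010universality}: an otherwise standard moment method whose delicacy lies entirely in the constants.
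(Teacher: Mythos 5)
Your high-level plan matches the paper's—expand the polynomials $x^{2m}\pm x^{2m-1}$ in a Chebyshev-type basis, bound the resulting coefficients (a Gaussian envelope via Stirling, which is what Lemma~\ref{lem:BorneD1} provides with the explicit constant $c_2=0.6321$), invoke the path-count bound Lemma~\ref{lem:PathsBound}, and then optimize the exponent, with Step~3 being the same quartic-in-$\sqrt k$ maximization the paper carries out. But Step~2 contains a genuine gap that the argument as written does not close.

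The relation between $U_k(\tB)$ and the Marchenko--Pastur polynomial $Q_k(\B)$ is \emph{not} a rescaling. The polynomials whose trace is a path count are $V_{k,s}$ with shift $s=\frac{(M-2)^2}{(M-1)(N-1)}$: up to scaling, $\Exp[\Tr V_{k,s}(\tB)]=\Exp[\Tr Q_k(\B)]=\hat\Sigma_1^1(k)$ (this is the content of Lemma~IV.1.1 in~\cite{feldheim2010universality}). Since $V_{k,s}=U_k+\sqrt{s}\,U_{k-1}$, one has $U_k=\sum_{j=0}^{k}(-\sqrt s)^j V_{k-j,s}$, an \emph{alternating} telescoping series, not a single rescaled $Q_k$. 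Without the additional observation that $\Exp[\Tr V_{j,s}(\tB)]\geq 0$ (it is a count of non-backtracking paths), one cannot bound this alternating sum termwise; dropping the signs is exactly what the paper's proof justifies, and your proposal never invokes it. Defining $\tilde Q_k(\B):=[2\sqrt{(M-1)(N-1)}]^k\,U_k(\tB)$ does produce \emph{some} degree-$k$ polynomial, but it is not $Q_k$ and Lemma~\ref{lem:PathsBound} does not apply to it. Relatedly, the factor $(1-M/N)^{-1}$ in $\Delta_m$ does not come from the $T_1,T_2$ split inside Lemma~\ref{lem:PathsBound}; it arises precisely from summing the geometric series $\sum_j s^j(\cdot)$ produced by the $U_k\to V_{j,s}$ expansion, with ratio $\tfrac{(M-2)^2}{MN}\leq\tfrac{M}{N}$. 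Once Step~2 is repaired along these lines—expand into $V_{j,s}$, drop signs by non-negativity, sum the geometric series—your Step~3 maximization does land on the $m^3/M^2$ exponent, with the prefactors $m$, $(MN/((M-1)(N-1)))^m$, and $M/m$ emerging as you describe.
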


\begin{proof}
Invoke Lemma IV.1.1 page 115 in \cite{feldheim2010universality} and Lemma \ref{lem:PathsBound} to get that
\begin{align}
\label{eq:TraceVks}
 \Exp[\Tr {V_{n,\frac{(M-2)^2}{(M-1)(N-1)}}(\tB)}] & \leqslant C_{0,\hat\Sigma}\textcolor{red}{}{\Big(\frac{MN}{(M-1)(N-1)}\Big)^{n/2}}n\exp\Big(C_{\hat\Sigma}(1+\sqrt{M/N})\frac{n^{3/2}}{M^{1/2}}\Big)\,.
 \end{align}
Set $s:=\frac{(M-2)^2}{(M-1)(N-1)}$. For $m \geqslant 1$, let $A_m=\Exp[\Tr{\tB^{2m}}]+\Exp[\Tr{\tB^{2m-1}}]$. Following pages 95-96 in \cite{feldheim2010universality} yields:
\begin{align}
 A_m & =\frac{1}{(2m+1)2^{2m}}\sum_{n=0}^m(2n+1)\binom{2m+1}{m-n}\Exp[\Tr{U_{2n}(\tB)}]\notag\\
 & \quad +\frac{1}{2m2^{2m}}\sum_{n=1}^m2n\binom{2m}{m-n}\Exp[\Tr{U_{2n-1}(\tB)}].\label{eq:Am1}
\end{align}
Using the fact that $V_{k,s}=U_k+\sqrt{s}U_{k-1}$, it holds
\begin{align}
 A_m & =\frac{1}{(2m+1)2^{2m}}\sum_{n=0}^m(2n+1)\binom{2m+1}{m-n}\sum_{k=0}^{2n}(-1)^ks^{k/2}\Exp[\Tr{V_{2n-k,s}(\tB)}]\notag\\
 & \quad +\frac{1}{2m2^{2m}}\sum_{n=1}^m2n\binom{2m}{m-n}\sum_{k=0}^{2n-1}(-1)^ks^{k/2}\Exp[\Tr{V_{2n-k-1,s}(\tB)}].
\notag
\end{align}
Note that the expectation $\Exp[\Tr {V_{k,s}(\tB)}]$ is non-negative. Indeed, one can check that, up to a multiplicative constant, $\Exp[\Tr {V_{k,s}(\tB)}]=\Exp[\Tr {Q_{k}(\tB)}]=\hat\Sigma_{1}^{1}(k)$. It follows that
\begin{align}
 A_m & \leqslant \frac{1}{(2m+1)2^{2m}}\sum_{n=0}^m(2n+1)\binom{2m+1}{m-n}\sum_{k=0}^{n}s^{k}\Exp[\Tr{ V_{2(n-k),s}(\tB)}]\notag\\
 & \quad +\frac{1}{2m2^{2m}}\sum_{n=1}^m2n\binom{2m}{m-n}\sum_{k=0}^{n-1}s^{k}\Exp[\Tr {V_{2n-2k-1,s}(\tB)}]\notag\\
 & \leqslant \frac{1}{(2m+1)2^{2m}}\sum_{n=1}^m(2n+1)\binom{2m+1}{m-n}(\sum_{k=0}^{n-1}s^{k}\Exp[\Tr {V_{2(n-k),s}(\tB)}]+s^nM)\notag\\
 & \quad +\frac{1}{2m2^{2m}}\sum_{n=0}^m2n\binom{2m}{m-n}\sum_{k=0}^{n-1}s^{k}\Exp[\Tr {V_{2n-2k-1,s}(\tB)}] \label{eq:Am2}
 \end{align}
Invoke \eqref{eq:TraceVks} to get with $C_{M,N}=C_{\hat\Sigma}(1+\sqrt{M/N})$,
\begin{align*}
 A_m & \leqslant \sum_{n=1}^m\frac{2n+1}{(2m+1)2^{2m}}\binom{2m+1}{m-n}
 \\
 &\quad\quad\times\sum_{k=0}^{n-1}s^{k}C_{0,\hat\Sigma}\textcolor{red}{}{{\Big(\frac{MN}{(M-1)(N-1)}\Big)^{n-k}}}2(n-k)\exp\Big[C_{M,N}\frac{2^{\frac32}(n-k)^{\frac32}}{M^{\frac12}}\Big]\\
 & \quad +\sum_{n=1}^m\frac{n}{m2^{2m}}\binom{2m}{m-n}\\
 &\quad\quad\times\sum_{k=0}^{n-1}s^{k}C_{0,\hat\Sigma}{\Big(\frac{MN}{(M-1)(N-1)}\Big)^{n-k-\frac12}}2\Big(n-k-\frac12\Big)\exp\Big[C_{M,N}\frac{2^{\frac32}(n-k-1/2)^{\frac32}}{M^{\frac12}}\Big]\\
 & \quad + \frac{1}{(2m+1)2^{2m}}\sum_{n=0}^m(2n+1)\binom{2m+1}{m-n}s^nM.
 \end{align*}
 Then
 \begin{align*}
A_m & \leqslant 
\frac{2C_{0,\hat\Sigma}}{1-\frac{(M-2)^2}{MN}}\sum_{n=1}^m \Big[\frac{2n+1}{(2m+1)2^{2m}}\binom{2m+1}{m-n}+\frac{n}{m2^{2m-1}}\binom{2m}{m-n}\Big]\\
 &\quad\quad\times n{\Big(\frac{MN}{(M-1)(N-1)}\Big)^n}\exp\Big[C_{M,N}\frac{2^{\frac32}n^{\frac32}}{M^{\frac12}}\Big]\\
  &\quad+ \frac{1}{(2m+1)2^{2m}}\sum_{n=0}^m(2n+1)\binom{2m+1}{m-n}s^nM. 
\end{align*}
\noindent
From Lemma \ref{lem:BorneD1} 
it holds
\[
\log\Big[\frac{n+1/2}{2^{2m}}\binom{2m+1}{m-n}\Big]\vee\log\Big[\frac{n}{2^{2m}}{2m \choose m-n}\Big]\leq -c_1-c_2
\frac{n^2}m
\]
where $c_1=-5$ and $c_2=0.6321$. We deduce that
\begin{align*}
 A_m & \leq\frac{4C_{0,\hat\Sigma}}{1-\frac{(M-1)(N-1)s}{MN}}
 \frac{\exp(-c_1)}m
 \sum_{n=1}^mn{\Big(\frac{MN}{(M-1)(N-1)}\Big)^n}\exp\Big(-c_2\frac{n^2}m+C_{M,N}\frac{2^{3/2}n^{3/2}}{M^{1/2}}\Big)
 \\
 & \quad+ \frac{M\exp(-c_1)}{m}\sum_{n=0}^ms^n\exp\Big(-c_2\frac{n^2}m\Big)\,,
 \\
 & \leq\frac{4C_{0,\hat\Sigma}\exp(-c_1)}{1-\frac{M}{N}}
 \Big[{\Big(\frac{MN}{(M-1)(N-1)}\Big)^m}+\frac Mm\Big]
 \sum_{n=1}^m\exp\Big(-c_2\frac{n^2}m+C_{M,N}\frac{2^{3/2}n^{3/2}}{M^{1/2}}\Big)\,.
\end{align*}
Observe that the maximum of $-ax^4+bx^3$ is $\frac{27b^4}{256a^3}$. We deduce that 
\[
-c_2\frac{n^2}m+C_{M,N}\frac{2^{3/2}n^{3/2}}{M^{1/2}}\leq C_{\mathrm{Rad}}(1+\sqrt{M/N})^4\frac{m^3}{M^2}
\]
where 
\[
C_{\mathrm{Rad}}=\frac{27}{4}\frac{C_{M,N}^4}{c_2^3(1+\sqrt{M/N})^4}=\frac{27}{4}\frac{C_{\hat\Sigma}^4}{c_2^3}=\frac{27}{4}\frac{1.81^4C_{D}^2}{c_2^3}=286.9C_{D}^2\,,
\]
as claimed.

The bound on $B_m:=\Exp[\Tr{\tB^{2m}}]-\Exp[\Tr{\tB^{2m-1}}]$ follows the same lines. The minus in front of $\Exp[\Tr{\tB^{2m-1}}]$ change the line \eqref{eq:Am1} to its opposite. The change of indices $k$ leads to the term $s^{k+1/2} \Exp[\Tr{V_{2(n-k-1),s}(\tB)}]$ in \eqref{eq:Am2}. Since we uniformly bound $n-k-1$ by $n$ in the rest of the proof and $s^{1/2}<1$, we get the same result.
\end{proof}

\subsection{Small deviation on the largest eigenvalue}
\label{sec:SmallDevLargest}
Observe that
\begin{align*}
 \Prob\{\lambda_M(\B)\geqslant (\sqrt{M}+\sqrt{N})^2+\varepsilon N\} & = \Prob\{\lambda_M(\tB)\geqslant \varepsilon_{M,N}\},
\end{align*}
with 
\[
\frac{55}{53}\Big(1+\frac{\varepsilon}{2\sqrt{M/N}}\Big)\geq
\varepsilon_{M,N}:=\frac{\sqrt{MN}+1}{\sqrt{(M-1)(N-1)}}+\frac{{\varepsilon}N}{2\sqrt{(M-1)(N-1)}}\geq1+\frac{\varepsilon}{2\sqrt{M/N}}\,,
\]
for all $N>M\geq54$. Set $f(x):=x^{2m}+x^{2m-1}$ and note that $f$ is non-increasing on $(-\infty,-1+\frac{1}{2m}]$ and non-decreasing on $[-1+\frac{1}{2m},\infty)$. Furthermore, its minimum is $-e_m$ where 
\[
e_m:=\frac{(2m-1)^{2m-1}}{(2m)^{2m}}=\frac{(1-\frac1{2m})^{2m}}{2m-1}\leq \frac1{2em}\,,
\]
and it is non-negative on $(-\infty,-1]\cup[0,\infty)$. Using Markov inequality, we deduce that
\begin{align}
\Prob(\lambda_M(\tB)\geqslant \varepsilon_{M,N})
 & \leq \Prob(f(\lambda_M(\tB))+e_m\geqslant f(\varepsilon_{M,N})+e_m)\notag\\
 & \leqslant \frac{\Exp[f(\lambda_M(\tB)]+e_m}{f(\varepsilon_{M,N})+e_m}\notag\\
 & \leqslant \frac{\sum_{k=1}^M(\Exp[f(\lambda_k(\tB)]+e_m)}{f(\varepsilon_{M,N})}\notag\\
 & = \frac{A_m+Me_m}{f(\varepsilon_{M,N})}
 \label{eq:MarkovLargest}
 \end{align}
Invoke Lemma \ref{lem:traces} to get that
 \begin{align*}
\Prob(\lambda_M(\tB)\geqslant \varepsilon_{M,N})
 & \leq \frac{{C_{0,\mathrm{Rad}}}m
 \big[{\Big(\frac{MN}{(M-1)(N-1)}\Big)^m}+\frac Mm\Big]
\exp\Big( C_{\mathrm{Rad}}(1+\sqrt{M/N})^4\frac{m^3}{M^2}\Big)+\frac{M}{2em}}{({1-\frac{M}{N}})f(\varepsilon_{M,N})}\,,
\end{align*}
for all $m \in \N$. Using that $M\geq54$ and $\log(1+x)\leq x$, we get
\begin{align*}
\Prob(\lambda_M(\tB)\geqslant \varepsilon_{M,n})
 & \leq \frac{{C_{0,\mathrm{Rad}}}
 \big[m+\frac{1+2e}{2e}M\big]
 \,
e^{ C_{\mathrm{Rad}}(1+\sqrt{M/N})^4\frac{m^3}{M^2}+54m(\frac1M+\frac1N)\log(\frac{54}{53})}}{({1-\frac{M}{N}})f(\varepsilon_{M,N})}
\end{align*}
for all $m \in \N$. Optimizing on $m$ yields the choice $m=\sqrt{\frac{2\log(\varepsilon_{M,N})}{3C_{\mathrm{Rad}}(1+\sqrt{M/N})^4}}M$ and
\[
\Prob\Big\{\lambda_M(\B)  \geqslant (\sqrt{M}+\sqrt{N})^2+\varepsilon N\Big\}
\leq
\frac{\mathds W_0(\rho,\varepsilon)}
{{1-\rho}}
M
\exp(-N\mathds W_1(\rho,\varepsilon))
\]
where $\rho=M/N$ and 
\begin{align*}
\mathds W_0(\rho,\varepsilon)&:=
\frac
{C_{0,\mathrm{Rad}}{(1+2e){\sqrt{3C_{\mathrm{Rad}}}}}(1+\sqrt{\rho})^2+{2e{\sqrt{2\log(\frac{55}{53}(1+\frac{\varepsilon}{2\sqrt{\rho}}))}}}}
{2e\sqrt{3C_{\mathrm{Rad}}}(1+\sqrt{\rho})^2}
\\
&\quad\times \exp
\left[
54\log(\frac{54}{53})(1+\rho){\frac{\sqrt{2\log(\frac{55}{53}(1+\frac{\varepsilon}{2\sqrt{\rho}}))}}{(1+\sqrt{\rho})^2\sqrt{3C_{\mathrm{Rad}}}}}
\right]
\\
\mathds W_1(\rho,\varepsilon)&:=
\frac{4\sqrt{2}}{3\sqrt{3}}\frac{\rho\log(1+\frac{\varepsilon}{2\sqrt{\rho}})^{\frac32}}{(1+\sqrt{\rho})^2\sqrt{C_{\mathrm{Rad}}}}
\end{align*}
Using that $\rho\leq1$, we derive that 
\begin{align*}
\mathds W_0(\rho,\varepsilon)
&\leq\frac
{4C_{0,\mathrm{Rad}}{(1+2e){\sqrt{3C_{\mathrm{Rad}}}}}+{2e{\sqrt{2\log(\frac{55}{53}(1+\frac{\varepsilon}{2\sqrt{\rho}}))}}}}
{2e\sqrt{3C_{\mathrm{Rad}}}}
\\
&\quad\times
\exp
\left[
108\log(\frac{54}{53}){\frac{\sqrt{2\log(\frac{55}{53}(1+\frac{\varepsilon}{2\sqrt{\rho}}))}}{\sqrt{3C_{\mathrm{Rad}}}}}
\right]
\\
&\leq c_0\exp
\left[
{c_0\sqrt{\log\Big(1+\frac{\varepsilon}{2\sqrt{\rho}}\Big)}}
\right]
\end{align*}
for some universal constant $c_0>0$. We deduce the following useful bound
\eq
\label{eq:SmalDevLogLarge}
\Prob\Big\{\lambda_M(\B)  \geqslant (\sqrt{M}+\sqrt{N})^2+\varepsilon N\Big\}
\leq
\frac{c_0M
e^{
{c_0\sqrt{\log\big(1+\frac{\varepsilon}{2\sqrt{\rho}}\big)}}
}}
{{1-\rho}}
e^{-N\mathds W_1(\rho,\varepsilon)}\,.
\qe
For $\varepsilon\leq \sqrt{\rho}$ we can deduce a small deviation inequality as follows. Observe that for any $\eta>0$ one can pick a constant $c_1(\eta)>0$, that depends only on $\eta$, such that for all $M\geq1$, it holds $M\leq c_1(\eta)\exp(\eta M)$. Note that
$\log(3/2)\frac{\varepsilon}{\sqrt{\rho}}
\leq
\log\big(1+\frac{\varepsilon}{2\sqrt{\rho}}\big)$
and set
\[
\mathbb V_{\mathrm{Rad}}:=
\frac{3\sqrt{3C_{\mathrm{Rad}}}}
{4\sqrt{2}\log(3/2)^{3/2}}
\,.
\]
We deduce that for any $C>\mathbb V_{\mathrm{Rad}}\approx 3242$ there exists a constant $v:=v(\rho,C)>0$ that depends only on $\rho=M/N$ and $C$ such that, for all $0\leq\varepsilon\leq \sqrt{\rho}$,
\eq
\label{eq:SmalDevLarge}
\Prob\Big\{\lambda_M(\B)  \geqslant (\sqrt{M}+\sqrt{N})^2+\varepsilon N\Big\}
\leq
v
\exp\Big(-C^{-1}N\frac{\rho^{1/4}}{(1+\sqrt{\rho})^2}\varepsilon^{\frac32}\Big)\,.
\qe

\subsection{Small deviation on the smallest eigenvalue}
Observe that
\begin{align*}
 \Prob\{\lambda_1(\B)\leqslant (\sqrt{M}-\sqrt{N})^2-\varepsilon' N\} & = \Prob\{\lambda_1(\tB)\leqslant -\varepsilon'_{M,N}\},
\end{align*}
with 
\[
1+\frac{\varepsilon'27}{53\sqrt{M/N}}\geq
\varepsilon'_{M,N}:=\frac{\sqrt{MN}-1}{\sqrt{(M-1)(N-1)}}+\frac{{\varepsilon'}N}{2\sqrt{(M-1)(N-1)}}\geq\frac{53}{54}\Big(1+\frac{\varepsilon'27}{53\sqrt{M/N}}\Big)\,,
\]
for all $N>M\geq54$. Set $g(x):=x^{2m}-x^{2m-1}$ and note that $g(x)=f(-x)$. It holds
\begin{align}
\Prob\{\lambda_1(\tB)\leqslant -\varepsilon'_{M,N}\}
 & \leq \Prob\{g(\lambda_1(\tB))+e_m\geqslant g(-\varepsilon'_{M,N})+e_m\}\notag\\
 & \leqslant \frac{\Exp[g(\lambda_1(\tB)]+e_m}{f(\varepsilon'_{M,N})+e_m}\notag\\
 & \leqslant \frac{\sum_{k=1}^M(\Exp[g(\lambda_k(\tB)]+e_m)}{f(\varepsilon'_{M,N})}\notag\\
 & = \frac{B_m+Me_m}{f(\varepsilon'_{M,N})}\notag
 \end{align}
and we recover an upper bound of the form \eqref{eq:MarkovLargest} for which Lemma \ref{lem:traces} can also be applied and we get that
 \begin{align*}
\Prob\{\lambda_1(\tB)\leqslant -\varepsilon'_{M,N}\}
 & \leq \frac{{C_{0,\mathrm{Rad}}}m
 \big[{\Big(\frac{MN}{(M-1)(N-1)}\Big)^m}+\frac Mm\Big]
\exp\Big( C_{\mathrm{Rad}}(1+\sqrt{M/N})^4\frac{m^3}{M^2}\Big)+\frac{M}{2em}}{({1-\frac{M}{N}})f(\varepsilon'_{M,N})}\,,
\end{align*}
for all $m \in \N$. The rest of the proof follows the same lines as in Section \ref{sec:SmallDevLargest} where we change $\varepsilon_{M,N}$ by $\varepsilon'_{M,N}$, we choose $m=\sqrt{\frac{2\log(54\varepsilon'_{M,N}/53)}{3C_{\mathrm{Rad}}(1+\sqrt{M/N})^4}}M$ and may have changed the harmless constant $c_0$ in $\mathds W_0$. Eventually, note that \eqref{eq:SmalDevLarge} has been obtained from \eqref{eq:SmalDevLogLarge} and we can use the same argument for the deviation on the smallest eigenvalue. This proves Proposition \ref{prop:FS}. 

\section{Stirling's formula and bounds on binomial coefficients}
\begin{lem}
\label{lem:AsymptoticGamma}
Let $z>0$ then there exists $\theta\in(0,1)$ such that:
\[
\Gamma(z+1)=(2\pi z)^{\frac12}\big(\frac ze\big)^{z}\exp\big(\frac\theta{12z}\big)\,.
\]
\end{lem}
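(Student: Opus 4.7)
The statement is the classical Stirling formula with explicit error term, so my plan is to reduce it to the standard Binet representation of $\log\Gamma$ and then estimate a single integral.

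First I would introduce the remainder
\[
\mu(z) := \log \Gamma(z+1) - \bigl(z+\tfrac{1}{2}\bigr)\log z + z - \tfrac{1}{2}\log(2\pi),
\]
so that the claim reduces to showing $0 < \mu(z) < \tfrac{1}{12 z}$ for every $z > 0$. Once this is established, the continuous function $\theta(z) := 12 z\,\mu(z)$ takes values in $(0,1)$, and exponentiating the identity $\mu(z) = \theta(z)/(12z)$ gives exactly the formula stated in the lemma.

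Next I would invoke Binet's second formula, which represents $\mu$ as
\[
\mu(z) \;=\; 2 \int_0^\infty \frac{\arctan(t/z)}{e^{2\pi t}-1}\,dt .
\]
This identity can either be quoted as a known consequence of the Abel--Plana summation (applied to the series for $\log\Gamma$) or proved directly by verifying the functional equation $\mu(z+1)-\mu(z) = (z+\tfrac12)\log(1+1/z) - 1$ and checking the asymptotics at infinity; both approaches are routine but give the cleanest path to a sharp bound.

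The lower bound $\mu(z) > 0$ is immediate since the integrand is strictly positive. For the upper bound, I would use $\arctan(x) \leq x$ for $x \geq 0$ to get
\[
\mu(z) \;\leq\; \frac{2}{z} \int_0^\infty \frac{t}{e^{2\pi t}-1}\,dt \;=\; \frac{2}{z}\cdot\frac{1}{24} \;=\; \frac{1}{12 z},
\]
where the value $\tfrac{1}{24}$ of the integral is standard (it follows from $\int_0^\infty \tfrac{t}{e^{2\pi t}-1}\,dt = \tfrac{1}{4\pi^2}\zeta(2) = \tfrac{1}{24}$). Strictness of the inequality $\arctan(x) < x$ on $(0,\infty)$ gives $\mu(z) < 1/(12z)$, so $\theta(z) \in (0,1)$ as required.

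The main obstacle is really only to justify Binet's integral representation; once that is in place, the bound $\int_0^\infty t/(e^{2\pi t}-1)\,dt = 1/24$ and the elementary inequality $\arctan(x)<x$ settle both sides. As an alternative self-contained route (if one prefers to avoid quoting Binet), I would apply the Euler--Maclaurin formula with first remainder term to $\sum_{k=1}^{n} \log k$ and pass to the limit $n\to\infty$, which yields the same representation with the Bernoulli-type positive kernel responsible for the constant $\tfrac{1}{12}$.
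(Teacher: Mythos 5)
Your proof is correct, and it is genuinely more than what the paper offers: the paper's ``proof'' of Lemma~\ref{lem:AsymptoticGamma} is a bare citation to \cite{abramowitz1965stegun}, Eq.~6.1.38, with no argument. Your route first reduces the statement to $0<\mu(z)<1/(12z)$ for $\mu(z):=\log\Gamma(z+1)-(z+\tfrac12)\log z+z-\tfrac12\log(2\pi)$; using $\log\Gamma(z+1)=\log z+\log\Gamma(z)$ one sees this is exactly the classical Binet remainder for $\log\Gamma(z)$, so Binet's second formula gives
\[
\mu(z)=2\int_0^\infty\frac{\arctan(t/z)}{e^{2\pi t}-1}\,dt\,,
\]
from which positivity of $\mu$ is immediate and the upper bound follows from $\arctan x<x$ together with $\int_0^\infty t/(e^{2\pi t}-1)\,dt=\zeta(2)/(4\pi^2)=1/24$; taking $\theta(z)=12z\,\mu(z)\in(0,1)$ and exponentiating gives the claim. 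All of these steps check out, including the small bookkeeping point that your $\mu$ (defined via $\Gamma(z+1)$ and $(z+\tfrac12)\log z$) agrees with the standard Binet $\mu$ (defined via $\Gamma(z)$ and $(z-\tfrac12)\log z$). What the paper buys by citing is brevity; what you buy is a self-contained derivation of the sharp constant $1/12$, at the cost of having to justify Binet's second formula (which you correctly flag as the one nontrivial external input, obtainable via Abel--Plana or Euler--Maclaurin).
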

\begin{proof}
See \cite{abramowitz1965stegun} Eq. 6.1.38.
\end{proof}

\begin{lem}
\label{lem:BorneD1}
It holds, for all $1\leqslant n \leqslant m$,
\begin{align*}
\log\Big[\frac{n}{2^{2m}}{2m \choose m-n}\Big]&\leq5-0.6321
\frac{n^2}m\\
\log\Big[\frac{n+1/2}{2^{2m}}\binom{2m+1}{m-n}\Big]&\leq2-0.6555
\frac{n^2}m
\end{align*}
\end{lem}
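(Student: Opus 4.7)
The plan is to prove both inequalities by applying Stirling's formula (Lemma \ref{lem:AsymptoticGamma}) to each factorial in the binomial coefficient and then analyzing the resulting analytic expression. For the first bound, writing $\binom{2m}{m-n}=(2m)!/((m-n)!(m+n)!)$ and applying the two-sided Stirling estimate to each factor yields, after simplification of the exponential,
\[
\frac{1}{2^{2m}}\binom{2m}{m-n} \;\leq\; \frac{1}{\sqrt{\pi m(1-x^2)}}\,\exp\Bigl(\tfrac{1}{24m} - m\,h(x)\Bigr),
\]
where $x=n/m\in(0,1]$ and $h(x):=(1-x)\log(1-x)+(1+x)\log(1+x)$. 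Taking logarithms and adding $\log n=\log(mx)$, the left-hand side of the desired inequality is bounded by
\[
F(x,m) \;:=\; \log(mx) - \tfrac{1}{2}\log\bigl(\pi m(1-x^2)\bigr) + \tfrac{1}{24m} - m\,h(x).
\]

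The next step is to extract the quadratic behavior of $h$. Direct inspection gives $h(0)=h'(0)=0$ and $h''(x)=2/(1-x^2)\geq 2$, whence $h(x)\geq x^2$ on $[0,1]$; moreover, since $h''$ is increasing, one verifies that the ratio $h(x)/x^2$ is non-decreasing on $(0,1]$, taking the value $1$ at $0^+$ and $2\log 2\simeq 1.386$ at $x=1$. The inequality to prove reduces to showing $F(x,m) + 0.6321\, mx^2 \leq 5$, that is,
\[
\log(mx) - \tfrac{1}{2}\log\bigl(\pi m(1-x^2)\bigr) + \tfrac{1}{24m} \;\leq\; 5 + m\bigl(h(x)-0.6321\, x^2\bigr),
\]
uniformly over $m\geq 1$ and $x\in[1/m,1]$.

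I would handle this by splitting into two regimes. In the \emph{bulk regime} $x\in[1/m,x_0]$ for some fixed $x_0<1$ (so that $1-x^2$ is bounded below), the weak estimate $h(x)-0.6321\,x^2 \geq 0.3679\,x^2$ gives $m(h(x)-0.6321\,x^2)\geq 0.3679\,n^2/m$, and one bounds $\log(mx)-\tfrac{1}{2}\log(\pi m(1-x^2))\leq \tfrac12\log(mx^2) + O(1)$; substituting $u=n^2/m$ and optimizing $\tfrac12\log u - 0.3679\,u$ over $u>0$ gives a bounded maximum, easily absorbed into the constant $5$. In the \emph{edge regime} $x\in[x_0,1]$, I invoke the stronger lower bound $h(x)\geq (h(x_0)/x_0^2)\,x^2$ with $h(x_0)/x_0^2>1$: the gap $m(h(x)-0.6321\,x^2)\geq \alpha\,mx^2$ with $\alpha>0.3679$ then dominates the only remaining singular contribution $-\tfrac12\log(1-x^2)$, since $mx^2\geq mx_0^2\geq 1$ and $-\log(1-x^2)\leq \log m$ for $x\leq 1-1/m$ (the endpoint $n=m$ is handled by direct substitution, using $h(1)=2\log 2$ and $\binom{2m}{0}=1$).

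The second inequality for $\binom{2m+1}{m-n}$ is handled identically, with $(2m)!$ replaced by $(2m+1)!$ and $h(x)$ by the completely analogous Kullback–Leibler rate function for a biased binomial, producing the marginally different constant $0.6555$. The main obstacle is a careful choice of the splitting point $x_0$ so that both regimes close simultaneously with the precise constants $0.6321$ and $0.6555$; this is ultimately a routine but delicate optimization, since the two regimes produce bounds of opposite monotonicity in $x_0$, and the stated constants seem to have been obtained by numerically matching them rather than in closed form.
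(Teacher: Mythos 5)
Your strategy is essentially the paper's: Stirling's formula, the identity that the $m$-scaled term in the exponent equals $-m\,h(x)$ with $h(x)=(1-x)\log(1-x)+(1+x)\log(1+x)$, the elementary bound $h(x)\geq x^2$, and a bulk/edge split in $x=n/m$. The paper's identity $x\log\bigl(1-\tfrac{2x}{1+x}\bigr)-\log(1-x^2)\leq -x^2$ is exactly your $h(x)\geq x^2$, and your observation $h''(x)=2/(1-x^2)$ is precisely why it holds. The one structural difference is in the edge regime: you propose using a \emph{sharper} lower bound $h(x)\geq (h(x_0)/x_0^2)\,x^2$ there, whereas the paper keeps $h(x)\geq x^2$ throughout and instead bounds the log term $\tfrac12\log\bigl(mx^2/(1-x^2)\bigr)$ directly by $\log m\leq m/e<0.3679\,mx^2$ when $x>x_0$ (using $1-x^2\geq 1/m$). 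Either route can work; the paper's version avoids re-estimating $h$ and keeps the accounting transparent.

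The genuine gap is that you stop before closing the arithmetic. The lemma is a \emph{quantitative} statement; it lives or dies on the constants $0.6321$ and $0.6555$, and you explicitly defer the choice of $x_0$ and the verification to a ``routine but delicate optimization,'' speculating that the constants were found numerically. The paper does make this explicit: it chooses $x_0=0.99995$, uses $\log z\leq z/e$ in the bulk to get $\tfrac12\log\bigl(mx^2/(1-x^2)\bigr)\leq 4.6052+\tfrac{1}{2e}\,mx^2$, uses $\log m\leq m/e<0.3679\,mx^2$ at the edge, and combines to get the coefficient $-1+0.3679=-0.6321$ on $mx^2$ and an additive constant $4.24<5$. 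That final arithmetic is precisely what your proposal leaves open, so as written the proof is incomplete. (Note also that the degenerate case $n=m$ must be handled separately, since $1-x^2=0$ there; you do flag this, and the paper dispatches it with $\binom{2m}{0}=1$.)
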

\begin{proof}
If $n=m$ then the result is clear. Otherwise, using Lemma \ref{lem:AsymptoticGamma}, one has
\begin{align*}
\log\Big[\frac{n}{2^{2m}}{2m \choose m-n}\Big]&\leqslant -0.364+\log n+(2m+1/2)\log m
\\
&\quad-(m-n+1/2)\log(m-n)-(m+n+1/2)\log(m+n)\,,
\\
&\leqslant -0.364-1/2\log( (m^{2}-n^{2})/(mn^{2}))
\\
&\quad+m\Big[\frac nm\log(1-\frac {2n/m}{1+n/m})-\log(1-\big(\frac nm\big)^{2})\Big]\,.
\end{align*}
The last term in the right hand side can be upper bounded thanks to the identity $x\log(1 - 2 x/(1 + x))- \log(1 - x^2)\leq-x^{2}$ for all $0<x<1$. It yields
\[
m\Big[\frac nm\log(1-\frac {2n/m}{1+n/m})-\log(1-\big(\frac nm\big)^{2})\Big]\leq-\frac{n^2}m\,.
\]
Let $x=n/m$ and observe that $x\leq1-1/m$. It holds that the middle term of the aforementioned right hand side can be expressed as
\[
-1/2\log( (m^{2}-n^{2})/(mn^{2}))=1/2\log(mx^2/(1-x^2))\,.
\]
If $x\leqslant0.99995$ then, using that $\log(z)\leqslant z/e$, we have
\[
1/2\log(mx^2/(1-x^2))\leqslant4.6052+(1/(2e))mx^2\,.
\]
If $0.99995<x\leq1-1/m$ then 
\[
1/2\log(mx^2/(1-x^2))\leqslant\log m\leqslant m/e<0.3679mx^2\,.
\]
In all cases, we get that
\[
1/2\log(mx^2/(1-x^2))\leqslant4.6052+0.3679mx^2
\]
We deduce that
\[
\log\Big[\frac{n}{2^{2m}}{2m \choose m-n}\Big]\leqslant 4.24
-0.6321
n^2/m \,,
\]
as claimed.
\end{proof}
\end{document}